\DeclareMathOperator{\HS}{\mathrm{HS}}
\DeclareMathOperator{\rank}{\mathrm{rank}}
\newtheorem{theorem}{Theorem}[section]
\newtheorem{corollary}[theorem]{Corollary}
\newtheorem{lemma}[theorem]{Lemma}
\newtheorem{proposition}[theorem]{Proposition}
\theoremstyle{definition}
\newtheorem{definition}[theorem]{Definition}
\newtheorem{example}[theorem]{Example}
\newtheorem{remark}[theorem]{Remark}
\title{Preserving Lefschetz properties after extension of variables}
\author{Filip Jonsson Kling}
\address{Department of Mathematics\\ Stockholm University\\ SE-106 91 Stockholm, Sweden}
\email{filip.jonsson.kling@math.su.se}
\thanks{2020 \emph{Mathematics Subject Classification.} 13E10; 13D40; 13C40.\\\indent 
\emph{Key words and phrases.} Strong Lefschetz property, maximal rank, Hilbert series, powers of linear forms}
\begin{document}

\begin{abstract}
Consider a standard graded artinian $k$-algebra $B$ and an extension of $B$ by a new variable, $A=B\otimes_k k[x]/(x^d)$ for some $d\geq 1$. We will show how maximal rank properties for powers of a general linear form on $A$ can be determined by maximal rank properties for different powers of general linear forms on $B$. This is then used to study Lefschetz properties of algebras that can be obtained via such extensions. In particular, it allows for a new proof that monomial complete intersections have the strong Lefschetz property over a field of characteristic zero. Moreover, it gives a recursive formula for the determinants that show up in that case. Finally, for algebras over a field of characteristic zero, we give a classification for what properties $B$ must have for all extensions $B\otimes_k k[x]/(x^d)$ to have the weak or the strong Lefschetz property.
\end{abstract}

\maketitle

\section{Introduction}
Throughout this paper, we will consider standard graded artinian $k$-algebras $A$ for some field $k$. Then $A$ can be written as a quotient of the polynomial ring $k[x_1,\dots, x_n]$ modulo some homogeneous ideal $I$ and thus has a grading $A=A_0\oplus A_1 \oplus \dots \oplus A_D$ where $A_i$ is the vector space consisting of zero and all degree $i$ forms in $A$. The main property of such an algebra that we will be interested in is if $A$ has the so called weak or strong Lefschetz property. An algebra $A$ has the weak Lefschetz property (WLP) if there is a linear form $\ell\in A_1$ such that the multiplication maps
\[
\cdot \ell: A_i \to A_{i+1}
\]
have full rank for any $i$. If all powers of $\ell$ also give maps that have full rank, we say that $A$ has the strong Lefschetz property (SLP). Even though it is usually not too difficult to determine if any given algebra $A$ enjoys the weak or strong Lefschetz property or not, classifying if a family of algebras satisfies those properties or not is a much more difficult task and is a very active area of research within commutative algebra and related fields, see \cite{Height_4, Lefschetz_book, Survey} and the references therein.  

One way to tackle the problem if an algebra has the weak or the strong Lefschetz property is to see if one can break up the algebra into smaller parts. Sometimes one can then use knowledge of how multiplications by general linear forms act on these parts separately to gain insight into how it acts on the whole algebra. For example, if $B$ and $C$ are two algebras with the SLP and symmetric Hilbert series, then $A=B\otimes_k C$ also has the SLP \cite[Theorem 3.34]{Lefschetz_book}. Conversely, if $B$ fails the WLP and the Hilbert series of $C$ has an isolated peak, then $A=B\otimes_k C$ fails the WLP \cite[Theorem 3.1]{Forcing_WLP}. 

The goal of this paper is to classify how the Lefschetz properties behave under such decompositions of an algebra, at least if one of the pieces is simple enough. To be precise, the main result of this paper gives a recursion of the following form.

\begin{theorem}
Let $k$ be a field of characteristic zero, $B$ an artinian $k$-algebra and set $$A=B\otimes_k k[x]/(x^d)$$ for some variable $x$ that does not appear in $B$ and some $d\geq 1$. Pick integers $i,t\geq 0$. If $\overline{\ell}$ is a general linear form in $B$ and $\ell=\overline{\ell} + x$ is a general linear form in $A$, then 
\[
\ell^t:A_i \to A_{i+t}
\]
has full rank if and only if the maps
\[
\cdot \overline{\ell}^{2q + t -(d-1)}:B_{i-q} \to B_{i+q+t-(d-1)}
\]
where $q$ goes from $\max\{0,d-t\}$ to $d-1$ are either all injective, or all surjective. 
\end{theorem}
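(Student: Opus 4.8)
The plan is to realize $\ell^t$ as an explicit block map and to read off its rank from the Jordan structure of multiplication by $\overline{\ell}$ on $B$. Writing $C=k[x]/(x^d)$, so that $A=B\otimes_k C$, the grading gives $A_i=\bigoplus_{j=0}^{d-1}B_{i-j}\,x^j$, and expanding $\ell=\overline{\ell}\otimes 1+1\otimes x$ yields, modulo $x^d$,
\[
\ell^t=\sum_{s=0}^{\min(t,\,d-1)}\binom{t}{s}\,\overline{\ell}^{\,t-s}x^s .
\]
In the basis $\{x^j\}$ the map $\ell^t\colon A_i\to A_{i+t}$ is therefore the block lower-triangular, block-Toeplitz matrix whose $(m,j)$ entry is $\binom{t}{m-j}\overline{\ell}^{\,t-(m-j)}\colon B_{i-j}\to B_{i+t-m}$ (zero unless $0\le m-j\le t$). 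Since $k$ has characteristic zero, every binomial coefficient $\binom{t}{s}$ with $0\le s\le t$ is nonzero; this is what will make the reduction below exact, and is essentially the only place the characteristic hypothesis enters.

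The key idea is to compute the rank block-free, through the Jordan type of the nilpotent degree-one operator $\overline{\ell}$. I would decompose $B$, as a graded $k[\overline{\ell}]$-module, into Jordan strings: each summand has a basis $v,\overline{\ell}v,\dots,\overline{\ell}^{\,a-1}v$ with $v\in B_p$ and $\overline{\ell}^{\,a}v=0$, occupying the consecutive degrees $p,p+1,\dots,p+a-1$. Such a string is exactly the $a$-dimensional irreducible representation $V_a$ of $\mathfrak{sl}_2$ with $\overline{\ell}$ as raising operator, and $C$ is the $d$-dimensional irreducible. Over a field of characteristic zero the Clebsch--Gordan rule applies, so each $V_a\otimes C$ splits, under $\ell=\overline{\ell}\otimes 1+1\otimes x$, into $\ell$-Jordan strings of sizes $a+d-1,a+d-3,\dots,|a-d|+1$, the string indexed by $r=0,1,\dots,\min(a,d)-1$ occupying the symmetric window of degrees $[\,p+r,\ p+a+d-2-r\,]$. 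Running this over all $\overline{\ell}$-strings of $B$ produces the complete $\ell$-Jordan type of $A$.

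From the Jordan type the ranks are immediate: on a single $\ell$-string spanning degrees $[\mathrm{lo},\mathrm{hi}]$ the map $\ell^t\colon(\deg i)\to(\deg i+t)$ has rank $1$ exactly when the string contains both $i$ and $i+t$, and rank $0$ otherwise. Hence $\ell^t\colon A_i\to A_{i+t}$ is injective iff every $\ell$-string meeting degree $i$ also reaches degree $i+t$, surjective iff every string meeting $i+t$ reaches back to $i$, and of full rank iff one of these holds. It then remains to translate ``every $\ell$-string through $i$ reaches $i+t$'' into a statement about the $\overline{\ell}$-strings of $B$. A direct comparison of endpoints shows that the constraint imposed on a $\overline{\ell}$-string $[p,p+a-1]$ by the Clebsch--Gordan windows is precisely that it contain both $i-q$ and $i+q+t-(d-1)$ for each relevant $q$, and that as $q$ ranges over $\max\{0,d-t\}\le q\le d-1$ one recovers exactly the maps $\overline{\ell}^{\,2q+t-(d-1)}\colon B_{i-q}\to B_{i+q+t-(d-1)}$; the lower cut-off $q=\max\{0,d-t\}$ is forced by the truncation modulo $x^d$, equivalently by the $\min(a,d)$ appearing in Clebsch--Gordan. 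This yields the two equivalences ``$\ell^t$ injective $\iff$ all the $\overline{\ell}^{\,2q+t-(d-1)}$ are injective'' and ``$\ell^t$ surjective $\iff$ all of them are surjective,'' whose disjunction is exactly the asserted full-rank criterion.

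I expect the bookkeeping of the last paragraph to be the main obstacle: matching the windows $[p+r,\ p+a+d-2-r]$ to the antidiagonal maps degree-by-degree, and in particular pinning down the exact range of $q$ (the boundary cases where $i$ or $i+t$ lands near the ends of a string, and the interaction of the $x^d$-truncation with short strings $a<d$) will require care. A more computational route that avoids Clebsch--Gordan is also available and may be preferable for the rest of the paper: one performs block Gaussian elimination on the lower-triangular Toeplitz matrix above, and the successive Schur complements produce precisely the powers $\overline{\ell}^{\,2q+t-(d-1)}$; this is the variant that should feed the determinantal recursion advertised in the abstract, and there too the nonvanishing of the binomial coefficients in characteristic zero is exactly what keeps the elimination nondegenerate.
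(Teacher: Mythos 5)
Your proposal is correct, but it takes a genuinely different route from the paper's. The paper never leaves the block matrix you write down in your second paragraph: it constructs explicit unitriangular matrices $L$ and $R$ whose unknown coefficients solve linear systems with coefficient matrices $C_{t,p,d}$ of binomial coefficients, shows these systems are solvable in characteristic zero because $\det(C_{t,p,d})\neq 0$ (a determinant evaluated by Krattenthaler), and thereby reduces $M_i^t$ to a block anti-diagonal form whose blocks are nonzero scalar multiples of matrices representing exactly the maps $\overline{\ell}^{\,2q+t-(d-1)}\colon B_{i-q}\to B_{i+q+t-(d-1)}$; the unrestricted statement (small $i$ or $t$, degrees past the socle) then needs separate, somewhat lengthy edge-case analysis. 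Your route instead uses the graded Jordan decomposition of $\overline{\ell}$ on $B$ plus the Clebsch--Gordan rule for the $\mathfrak{sl}_2$-modules $V_a\otimes V_d$, and reads ranks off string endpoints. I checked the interval bookkeeping you defer, and it does close: non-injectivity (resp.\ non-surjectivity) of $\ell^t$ is witnessed by some window $[p+r,\,p+a+d-2-r]$ iff some map in the stated family fails injectivity (resp.\ surjectivity), where the matching of the indices $r$ and $q$ is \emph{not} one-to-one (especially for surjectivity); one shows instead that the set of admissible $r$ for a given string is a nonempty integer interval exactly when the set of admissible $q$ is, via elementary inequalities. The trade-off between the two proofs is clear: your argument handles all $i,t\geq 0$ and all socle-degree edge cases uniformly, which is a genuine simplification, but it is confined to characteristic zero (Clebsch--Gordan fails mod $p$) and yields no determinant information, whereas the paper's elimination gives both the positive-characteristic criterion (in terms of $\det(C_{t,p,d})$ modulo $p$) and the determinant recursion that drives the later results on monomial complete intersections. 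One small correction to your closing remark: what keeps the elimination nondegenerate is not the nonvanishing of the individual binomial coefficients $\binom{t}{s}$ but of the determinants $\det(C_{t,p,d})$; these can vanish modulo $p$ even when every relevant binomial coefficient is nonzero, which is precisely why the paper's positive-characteristic hypothesis is stated in terms of these determinants.
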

See Theorem \ref{thm:Rank-recursion} for the full statement, including some positive characteristic and determinant calculations. 
This result thus explains how the Lefschetz properties behaves when adding a new variable to the algebra. In particular, it allows for a new proof that monomial complete intersections have the strong Lefschetz property over fields of characteristic zero, as first proven independently by Stanley \cite{Stanley} and Watanabe \cite{Watanabe}. Moreover, in Theorem~\ref{thm:CI_determinant}, it is used to give a new recursive formula for the determinants arising in the study of the SLP for these monomial complete intersections.

As the proof of Theorem \ref{thm:Rank-recursion} is quite technical, we exhibit in Section \ref{sec:Quadratic} the main ideas of our proof in the quadratic case, when a variable $x$ with $x^2=0$ is added. Section \ref{sec:recursion} is then devoted to establish Theorem \ref{thm:Rank-recursion} of how maximal rank properties for powers of a linear form behave when adding a new variable $x$ with $x^d=0$ for an arbitrary $d\geq 2$. The proof that monomial complete intersections have the SLP then follows in Section \ref{sec:mon_CI}. Finally, some applications of Theorem~\ref{thm:Rank-recursion} are given in Section \ref{sec:Applications}. There we ask what kind of properties an algebra must satisfy in order for families of extensions of it to always satisfy some Lefschetz property. In particular, we give a new proof of a theorem of Lindsey \cite[Theorem 3.10]{Lindsey} and some new variations of it.

\section{The quadratic case}\label{sec:Quadratic}
In this section we want to examine how maximal rank properties behave for artinian algebras when adding a new variable $x$ with $x^2=0$. The arguments in this case are heavily inspired by Phoung and Tran's proof that quadratic monomial complete intersections have the SLP \cite{New_Stanley}. So let 
\[
A=B\otimes_k k[x]/(x^2)
\]
where $B$ is some standard graded artinian $k$-algebra and $x$ is a variable not appearing in $B$. For a general linear form $\overline{\ell}\in B_1$, we then have that a general linear form $\ell\in A_1$ can be written as
\[
\ell = \overline{\ell} + cx
\]
for some nonzero constant $c$ in $k$. However, after a change of variables we may assume that $c=1$, so we will work with
\[
\ell = \overline{\ell} + x.
\]
See \cite[Theorem 2.2]{Sum_of_var_SLP} for details. The question we now would like to answer is if we can determine when the multiplication map
\[
\cdot \ell^t:A_i \to A_{i+t}
\]
has full rank for some given integers $i$ and $t$ given knowledge of when powers of $\overline{\ell}$ give full rank maps on $B$. We will also need a coherence notion for a family of linear maps.

\begin{definition}
Let $f_i:V_i \to W_i$ for $i=1,2,\dots$ be a family of linear maps. We say that they have full rank \emph{for the same reason} if either all $f_i$ are injective or all $f_i$ are surjective.
\end{definition}

Next, for $A=B\otimes_k k[x]/(x^2)$, write $\mathcal{A}_i$ for a basis of $A_i$ and $\mathcal{B}_i$ for a basis of $B_i$. Then we can choose the basis of $A_i$ such that
\begin{equation}\label{eq:A_decomp_quad}
\mathcal{A}_i = \mathcal{B}_i \sqcup x\mathcal{B}_{i-1}.
\end{equation}
We also have that
\begin{equation}\label{eq:ell_decomp_quad}
\ell^t= \overline{\ell}^t + tx\overline{\ell}^{t-1}
\end{equation}
in $A$ for any integer $t\geq 0$ since $x^2=0$. Further, let $M_i^t$ denote the multiplication matrix corresponding to 
\[
\cdot \ell^t:A_i\to A_{i+t}
\]
and $\overline{M}_i^t$ the corresponding matrix for
\[
\cdot \overline{\ell}^t:B_i\to B_{i+t}.
\]
Note that the $t$ in $M_i^t$ does \emph{not} indicate taking the transpose, it only indicates the power of the linear form that the matrix is representing. We now wish to write $M_i^t$ in such a way so that it is easy to determine when it has full rank. The decomposition of $\mathcal{A}_i$ in \eqref{eq:A_decomp_quad} gives that we may assume that $i\geq 0$ and $B_{i+t-1}\neq 0$ as else $A_i=0$ or $A_{i+t}=0$, giving that our map has full rank for trivial reasons. Let us for now assume that $i>0$ and $B_{i+t}\neq 0$ as well to avoid some edge cases. Using the decomposition of $\mathcal{A}_i$ and the expression for $\ell^t$ in \eqref{eq:ell_decomp_quad} then gives that $M_i^t$ decomposes into several blocks as
\[
M_i^t=
\begin{pmatrix}
\overline{M}_i^t & 0 \\
t\overline{M}_i^{t-1} & \overline{M}_{i-1}^t
\end{pmatrix}.
\]
This is a generalization of \cite[Lemma 2.4]{New_Stanley} where such a decomposition is done given that $B$ is quadratic monomial complete intersection. Observe now that since $\overline{\ell}^t = \overline{\ell}^{t-1}\cdot\overline{\ell}= \overline{\ell}\cdot \overline{\ell}^{t-1}$, we can factor the diagonal blocks as
\[
\overline{M}_{i-1}^t=\overline{M}_i^{t-1}\overline{M}_{i-1}^{1} \text{ and } \overline{M}_{i}^t=\overline{M}_{i+t-1}^{1}\overline{M}_{i}^{t-1}.
\]
This shows that $M_i^t$ is of the form such that the next result can be used, which is a more detailed version of \cite[Lemma 3.2]{Quadratic_LEX}.

\begin{lemma}
\label{lem:Block_quadratic}
Let $U,P,D$ be matrices over a field $k$ such that $UPD$ is defined and let $\alpha$ in $k$ be a non-zero element. Then there are square matrices $L, R$ with determinant one such that for 
\[ M=
\begin{pmatrix}
UP & 0 \\
\alpha P & PD 
\end{pmatrix}
\]
we have that
\[
LMR=\begin{pmatrix}
0 & -\alpha^{-1}UPD \\
\alpha P & 0 
\end{pmatrix}.
\]
In particular, $\rank(M)=\rank(P) + \rank(UPD)$ and $M$ has full rank if and only if $P$ and $UPD$ both have full rank for the same reason. Moreover, if $P$ and $UPD$ are square matrices of size $n$ and $m$ respectively, then
\[
\det(M) = (-1)^{m(n+1)}\det(P)\det(UPD)\alpha^{n-m}.
\]
\end{lemma}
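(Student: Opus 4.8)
The plan is to reduce $M$ to its block anti-diagonal normal form by a single block row operation followed by a single block column operation, each realized by a block unit upper-triangular matrix, which automatically has determinant one. The governing observation is that the off-diagonal block $\alpha P$ shares the factor $P$ with \emph{both} diagonal blocks $UP$ and $PD$, so it can be used to annihilate each of them in turn; once both diagonal blocks are cleared, all three derived statements (normal form, rank, determinant) fall out.

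Concretely, I would take
\[
L = \begin{pmatrix} I & -\alpha^{-1} U \\ 0 & I \end{pmatrix}, \qquad R = \begin{pmatrix} I & -\alpha^{-1} D \\ 0 & I \end{pmatrix},
\]
with the identity blocks sized to match the row and column partitions of $M$. Both are block unit-triangular, so $\det L = \det R = 1$. Left multiplication by $L$ subtracts $\alpha^{-1}U$ times the bottom block row from the top block row; since $UP = \alpha^{-1}U \cdot (\alpha P)$ this kills the top-left block and deposits $-\alpha^{-1}UPD$ in the top-right corner. Right multiplication by $R$ then subtracts the left block column, right-multiplied by $\alpha^{-1}D$, from the right block column; since $PD = (\alpha P)\cdot \alpha^{-1}D$ this kills the bottom-right block while leaving the other entries unchanged. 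A direct block multiplication confirms that $LMR$ is the claimed anti-diagonal matrix. This is the cleanest step and should go through with only routine block arithmetic.

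For the rank statement, invertibility of $L$ and $R$ gives $\rank(M) = \rank(LMR)$, and the rank of a block anti-diagonal matrix is the sum of the ranks of its two nonzero blocks; as $\alpha \neq 0$ these are $\rank(\alpha P) = \rank(P)$ and $\rank(-\alpha^{-1}UPD) = \rank(UPD)$, yielding $\rank(M) = \rank(P) + \rank(UPD)$. I would then settle the ``full rank iff same reason'' equivalence by dimension counting. If $P$ and $UPD$ are both injective (resp.\ both surjective), then $\rank(M)$ equals the total number of columns (resp.\ rows) of $M$, so $M$ is injective (resp.\ surjective) and hence has full rank. Conversely, if $M$ has full rank it is injective or surjective; in the injective case $\rank(P)+\rank(UPD)$ equals the total column count, which forces each summand to attain its maximal column rank, i.e.\ both maps are injective, and symmetrically in the surjective case. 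This is exactly where the phrasing ``for the same reason'' earns its keep: a purely dimensional obstruction rules out the mixed situation in which one map is injective but not surjective while the other is surjective but not injective.

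Finally, for the determinant I would use $\det M = \det(LMR)$ together with the standard anti-diagonal block identity $\det\!\begin{pmatrix} 0 & B \\ C & 0\end{pmatrix} = (-1)^{mn}\det(B)\det(C)$ for square blocks $B$ of size $m$ and $C$ of size $n$, which follows from swapping the two block columns past one another. Pulling out the scalars via $\det(-\alpha^{-1}UPD) = (-1)^m\alpha^{-m}\det(UPD)$ and $\det(\alpha P) = \alpha^n\det(P)$ and collecting the sign $(-1)^{mn+m} = (-1)^{m(n+1)}$ gives the stated formula. I expect the main obstacle to be bookkeeping rather than conceptual: keeping the four block dimensions mutually consistent throughout, and tracking the exponents of $-1$ and of $\alpha$ precisely, since an off-by-one in the sign exponent or a swap of the roles of $m$ and $n$ in the column-exchange sign would corrupt the final determinant formula.
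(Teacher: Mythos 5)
Your proposal is correct and takes essentially the same route as the paper: the identical block unit-triangular matrices $L=\begin{pmatrix} I & -\alpha^{-1}U \\ 0 & I\end{pmatrix}$ and $R=\begin{pmatrix} I & -\alpha^{-1}D \\ 0 & I\end{pmatrix}$, the same rank argument via invariance under invertible scalars, and the same determinant computation by swapping block columns and extracting $(-1)^{mn}$, $(-1)^m\alpha^{-m}$, and $\alpha^n$. Your explicit dimension-counting justification of the ``full rank for the same reason'' equivalence is a slightly more detailed write-up of a step the paper leaves implicit, but it is not a different approach.
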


\begin{proof}
Choose 
\[
L=\begin{pmatrix}
I & -\alpha^{-1}U \\
0 & I 
\end{pmatrix} \text{ and } R=\begin{pmatrix}
I & -\alpha^{-1}D \\
0 & I 
\end{pmatrix}.
\]
The main statement of the lemma follows from the calculation 
\[
\begin{pmatrix}
I & -\alpha^{-1}U \\
0 & I 
\end{pmatrix}
\begin{pmatrix}
UP & 0 \\
\alpha P & PD 
\end{pmatrix}
\begin{pmatrix}
I & -\alpha^{-1}D \\
0 & I 
\end{pmatrix} = 
\begin{pmatrix}
0 & -\alpha^{-1}UPD \\
\alpha P & 0 
\end{pmatrix}.
\]
The claim about the ranks then follows since $\rank(\alpha E)=\rank(E)$ for any matrix $E$ and an invertible element $\alpha$. Finally, in the case that $P$ and $UPD$ are square of size $n$ and $m$, we get using $\det(L)=\det(R)=1$ that
\begin{align*}
\det 
\begin{pmatrix}
UP & 0 \\
\alpha P & PD 
\end{pmatrix} &= \det 
\begin{pmatrix}
0 & -\alpha^{-1}UPD \\
\alpha P & 0 
\end{pmatrix} \\ 
&= (-1)^{nm}\det 
\begin{pmatrix}
\alpha P & 0 \\
0 & -\alpha^{-1}UPD 
\end{pmatrix} \\
&=(-1)^{nm}\det(\alpha P)\det(-\alpha^{-1}UPD) \\
&=(-1)^{m(n+1)}\det(P)\det(UPD)\alpha^{n-m}
\end{align*}
as desired.
\end{proof}

Applying Lemma \ref{lem:Block_quadratic} to $M=M_i^t$, we have 
\[
P=\overline{M}_i^{t-1}, \quad U=\overline{M}_{i+t-1}^{1}, \quad D=\overline{M}_{i-1}^{1}, \quad \alpha=t
\]
with 
\[
UPD=\overline{M}_{i+t-1}^{1}\overline{M}_i^{t-1}\overline{M}_{i-1}^{1} = \overline{M}_{i-1}^{t+1}
\]
the matrix representing $\cdot \overline{\ell}^{t+1}:\overline{A}_{i-1} \to \overline{A}_{i+t}$. With this preparation, we can now state the main result of this section.

\begin{theorem}
\label{thm:recursion_quadratic}
Let $A=B\otimes_k k[x]/(x^2)$ be a standard graded artinian $k$-algebra and let $\ell=\overline{\ell}+ x$ be a general linear form in $A$. Fix $i,t\geq 1$ and assume that $B_{i+t}\neq 0$ and that $t$ is invertible in $k$. Then $\cdot \ell^t : A_i \to A_{i+t}$ has full rank if and only if the maps
\[
\cdot \overline{\ell}^{t-1}: B_i \to B_{i+t-1}
\text{ and } \cdot \overline{\ell}^{t+1}: B_{i-1} \to B_{i+t}
\]
have full rank for the same reason. Moreover, if the matrices $\overline{M}_i^{t-1}$ and $\overline{M}_{i-1}^{t+1}$ representing these maps are square of size $n$ and $m$ respectively, then the matrix $M_i^t$ representing $\cdot \ell^t$ is square of size $n+m$ and
\[
\det(M_i^t) = (-1)^{m(n+1)}\det(\overline{M}_i^{t-1})\det(\overline{M}_{i-1}^{t+1})t^{n-m}.
\]
\end{theorem}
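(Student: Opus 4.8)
The plan is to assemble the four-block description of $M_i^t$ recorded above and feed it into Lemma~\ref{lem:Block_quadratic}; the genuine content of the statement has essentially been prepared already, so the work is one of verifying hypotheses and matching identifications. First I would dispose of the edge cases. Since $B$ is standard graded we have $B_{j+1}=B_1\cdot B_j$, so $B_j=0$ forces $B_{j+1}=0$; contrapositively the hypothesis $B_{i+t}\neq 0$ guarantees $B_j\neq 0$ for every $0\le j\le i+t$. As $i,t\ge 1$, this makes each of $B_{i-1},B_i,B_{i+t-1},B_{i+t}$ nonzero, so that $A_i=B_i\oplus xB_{i-1}$ and $A_{i+t}=B_{i+t}\oplus xB_{i+t-1}$ are both nonzero and the block decomposition coming from \eqref{eq:A_decomp_quad} and \eqref{eq:ell_decomp_quad} is valid with no degenerate blocks.

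With
\[
M_i^t=\begin{pmatrix}\overline{M}_i^t & 0 \\ t\overline{M}_i^{t-1} & \overline{M}_{i-1}^t\end{pmatrix}
\]
in hand, the next step is to exhibit the common factor required by the lemma. Using $\overline{\ell}^t=\overline{\ell}^{t-1}\cdot\overline{\ell}=\overline{\ell}\cdot\overline{\ell}^{t-1}$ I factor the diagonal blocks as $\overline{M}_i^t=\overline{M}_{i+t-1}^1\overline{M}_i^{t-1}$ and $\overline{M}_{i-1}^t=\overline{M}_i^{t-1}\overline{M}_{i-1}^1$, so that with $P=\overline{M}_i^{t-1}$, $U=\overline{M}_{i+t-1}^1$, $D=\overline{M}_{i-1}^1$ and $\alpha=t$ the matrix $M_i^t$ is exactly of the shape treated in Lemma~\ref{lem:Block_quadratic}. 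The hypothesis that $t$ is invertible in $k$ is precisely what licenses the choice $\alpha=t\neq 0$ demanded there.

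Applying the lemma then yields at once that $M_i^t$ has full rank if and only if $P$ and $UPD$ have full rank for the same reason, and it remains only to identify these matrices: $P=\overline{M}_i^{t-1}$ represents $\cdot\overline{\ell}^{t-1}:B_i\to B_{i+t-1}$, while by associativity of multiplication $UPD=\overline{M}_{i+t-1}^1\overline{M}_i^{t-1}\overline{M}_{i-1}^1=\overline{M}_{i-1}^{t+1}$ represents $\cdot\overline{\ell}^{t+1}:B_{i-1}\to B_{i+t}$. For the determinant clause the size bookkeeping is immediate: if $P$ is $n\times n$ and $UPD$ is $m\times m$, then $\dim A_i=\dim B_i+\dim B_{i-1}=n+m=\dim A_{i+t}$, so $M_i^t$ is square of size $n+m$, and substituting $\alpha=t$ into the determinant formula of Lemma~\ref{lem:Block_quadratic} produces exactly the claimed expression for $\det(M_i^t)$.

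The only points requiring real care—and hence the nearest thing to an obstacle—are the two verifications that the argument silently relies on. The first is that the standing hypotheses rule out every degenerate block, which is settled by the standard-graded vanishing observation above. The second is the correct matching of bases, so that the lower-left block genuinely equals $t\overline{M}_i^{t-1}=\alpha P$ and the upper-right block genuinely vanishes; this is pinned down by tracking the action of $\ell^t=\overline{\ell}^t+tx\overline{\ell}^{t-1}$ on a basis element $xc\in x\mathcal{B}_{i-1}$, where $x^2=0$ annihilates the $tx\overline{\ell}^{t-1}$ term and leaves only $x\overline{\ell}^t c$, forcing the lower-right block to be $\overline{M}_{i-1}^t$ and the upper-right block to be zero.
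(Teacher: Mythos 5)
Your proof is correct and takes essentially the same route as the paper: the block decomposition of $M_i^t$, the factorization of the diagonal blocks through the common factor $P=\overline{M}_i^{t-1}$, and an application of Lemma~\ref{lem:Block_quadratic} with $U=\overline{M}_{i+t-1}^1$, $D=\overline{M}_{i-1}^1$, $\alpha=t$, so that $UPD=\overline{M}_{i-1}^{t+1}$. The additional verifications you spell out (that $B_{i+t}\neq 0$ forces all lower graded pieces to be nonzero, and the basis bookkeeping for the blocks) are exactly the details the paper delegates to the discussion preceding the theorem, so nothing needs to change.
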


\begin{proof}
Let $M_i^t$ be the matrix representing $\cdot \ell^t:A_i\to A_{i+t}$. In the context of  Lemma~\ref{lem:Block_quadratic}, we have $P=\overline{M}_i^{t-1}$, $UPD=\overline{M}_{i-1}^{t+1}$ and $\alpha=t$. By our assumption that $t$ is invertible in $k$, we get that $\alpha$ is non-zero. Hence we can apply Lemma~\ref{lem:Block_quadratic} to $M_i^t$ and the results follow.
\end{proof}

\begin{remark}
Note that the formula for the determinant in Theorem \ref{thm:recursion_quadratic} holds in the case when $t$ is not invertible in $k$ as well. Indeed, this is the case since the determinant in characteristic $p$ is just obtained form the formula for the determinant in characteristic zero by reducing mod $p$.
\end{remark}

Let us now examine what happens in the edge cases.

\begin{proposition}
\label{prop:quadratic_edge_case}
Let $A=B\otimes_k k[x]/(x^2)$ be a standard graded artinian $k$-algebra and let $\ell=\overline{\ell}+ x$ be a general linear form in $A$. Fix $i\geq 0$, $t\geq 1$ and assume that $t$ is invertible in $k$. Consider the map $\cdot \ell^t:A_i \to A_{i+t}$ when $i=0$ or when $B_{i+t}=0$. If $i=0$, it has full rank if and only if 
\[
\cdot \overline{\ell}^{t-1}:B_0 \to B_{t-1}
\]
has full rank, while if $i>0$ and $B_{i+t}=0$, it has full rank if and only if 
\[
\cdot \overline{\ell}^{t-1}:B_i \to B_{i+t-1}
\]
is surjective. Moreover, if $i=0$, $B_{t}=0$ and $\dim B_{t-1}=1$, then the matrix $M_0^t$ representing $\cdot \ell^t$ is square with determinant
\[
\det(M_0^t) = t\det(\overline{M}_0^{t-1})
\]
where $\overline{M}_0^{t-1}$ is the matrix representing $\cdot \overline{\ell}^{t-1}:B_0 \to B_{t-1}$.
\end{proposition}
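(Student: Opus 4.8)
The plan is to feed the edge cases back through the same block decomposition used just before the theorem, noting that in each edge case one block row or block column of $M_i^t$ simply disappears. Recall $A_i = B_i \oplus xB_{i-1}$ with $\mathcal{A}_i = \mathcal{B}_i \sqcup x\mathcal{B}_{i-1}$, and $\ell^t = \overline{\ell}^t + tx\overline{\ell}^{t-1}$. When $i=0$ the summand $xB_{i-1}=xB_{-1}$ is zero, so the second block column is absent and $M_0^t = \left(\begin{smallmatrix}\overline{M}_0^t \\ t\overline{M}_0^{t-1}\end{smallmatrix}\right)$; when $i>0$ and $B_{i+t}=0$ the summand $B_{i+t}$ of the target is zero, so the first block row is absent and $M_i^t = \big(t\overline{M}_i^{t-1}\ \ \overline{M}_{i-1}^t\big)$. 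I would record these two shapes first, since each is then settled by an essentially one-line rank computation.

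For $i=0$ I would argue directly on the one-dimensional space $A_0 = B_0 = k\cdot 1$. The image of the generator is $\ell^t\cdot 1 = \overline{\ell}^t + tx\overline{\ell}^{t-1}$, whose two summands lie in the complementary subspaces $B_t$ and $xB_{t-1}$ of $A_t$; since $t$ is invertible and $\overline{\ell}^{t-1}=0$ forces $\overline{\ell}^t=0$, this element is nonzero precisely when $\overline{\ell}^{t-1}\neq 0$, i.e.\ exactly when $\overline{M}_0^{t-1}$ is injective. The only point needing care is the interpretation of ``full rank'' from a one-dimensional source: if $B_{t-1}\neq 0$ then $A_t\neq 0$ and full rank means injectivity for both maps, whereas if $B_{t-1}=0$ then $A_t=0$ (as $B$ is standard graded) and both maps are vacuously surjective. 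Matching these two regimes yields that $\ell^t$ has full rank iff $\overline{M}_0^{t-1}$ does.

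For $i>0$ with $B_{i+t}=0$, the key is to compute the image of $\ell^t$. For $b\in B_i$ one has $\ell^t b = tx\overline{\ell}^{t-1}b$ since $\overline{\ell}^t b \in B_{i+t}=0$, and for $xb'$ with $b'\in B_{i-1}$ one has $\ell^t(xb') = x\overline{\ell}^t b' = x\overline{\ell}^{t-1}(\overline{\ell} b')$ with $\overline{\ell} b'\in B_i$; hence, using that $t$ is invertible, the image of $\ell^t$ is exactly $x\,\overline{\ell}^{t-1}(B_i)$. Composing with the isomorphism $B_{i+t-1}\xrightarrow{\sim} xB_{i+t-1}=A_{i+t}$ shows $\rank(M_i^t)=\rank(\overline{M}_i^{t-1})$ and that $\ell^t$ is surjective iff $\overline{M}_i^{t-1}$ is. It then remains to see that here full rank \emph{is} surjectivity: since $\rank(\overline{M}_i^{t-1})\le \dim B_i < \dim B_i + \dim B_{i-1} = \dim A_i$ whenever $B_{i-1}\neq 0$, the map $\ell^t$ cannot be injective unless $B_{i-1}=0$, and standard gradedness ($B_{i-1}=0\Rightarrow B_i=0$ for $i>0$) collapses that possibility to the trivial zero map. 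I expect this last point, reconciling the ``full rank means injective or surjective'' dichotomy with the surjectivity statement, to be the main obstacle, and the place where the hypotheses $i>0$ and standard gradedness are genuinely used.

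Finally, for the determinant I would specialize to $i=0$, $B_t=0$, $\dim B_{t-1}=1$. Then both $A_0=B_0$ and $A_t=xB_{t-1}$ are one-dimensional, so $M_0^t$ is $1\times 1$, and from $\ell^t\cdot 1 = tx\overline{\ell}^{t-1}$ (the term $\overline{\ell}^t$ vanishing as $B_t=0$) I read off $M_0^t = t\,\overline{M}_0^{t-1}$ in the basis $x\mathcal{B}_{t-1}$, giving $\det(M_0^t)=t\det(\overline{M}_0^{t-1})$ at once.
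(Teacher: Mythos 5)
Your proof is correct and follows essentially the same route as the paper: you record the same degenerate block shapes of $M_i^t$ and reduce each case to the rank of $\overline{M}_i^{t-1}$, exactly as the paper does with its explicit unipotent matrices $L$ and $R$ (your image computations are just the element-wise form of those row/column reductions, both resting on the factorization $\overline{\ell}^t=\overline{\ell}\cdot\overline{\ell}^{t-1}$). If anything, you are slightly more explicit than the paper in handling the degenerate subcases ($B_{t-1}=0$ when $i=0$, and $B_{i-1}=0$ when $i>0$) and in reconciling the full-rank dichotomy with the surjectivity requirement.
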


\begin{proof}
If $i=0$, then the decomposition of $\mathcal{A}_0=\mathcal{B}_0$ has only one piece. Hence, if $B_{i+t}\neq0$, the matrix $M_0^t$ representing $\cdot \ell^t:A_0 \to A_{t}$ takes the form
\[
M_0^t = \begin{pmatrix}
\overline{M}_0^t \\
t\overline{M}_0^{t-1}
\end{pmatrix}.
\]
Multiplying by 
\[
L=\begin{pmatrix}
I & -t^{-1}\overline{M}_{t-1}^1 \\
0 & I
\end{pmatrix}
\]
from the left then gives
\[
LM_0^t = \begin{pmatrix}
0 \\
t\overline{M}_0^{t-1}
\end{pmatrix}.
\]
Note that $t^{-1}$ exists by assumption. Hence $LM_0^t$, and therefore also $M_0^t$, has full rank if and only if $\overline{M}_0^{t-1}$ is injective. Since $\dim_k B_0 = 1$, $\overline{M}_0^{t-1}$ is a matrix consisting of a single column, so it is injective if and only if it is non-zero, which is equivalent to $\cdot \overline{\ell}^{t-1}:B_0 \to B_{t-1}$ having full rank.

If $B_{i+t}=0$, then $\mathcal{A}_{i+t}=x\mathcal{B}_{i+t-1}$. Hence $M_i^{t}$ takes the form
\[
M_i^t = \begin{pmatrix}
t\overline{M}_i^{t-1} & \overline{M}_{i-1}^{t}
\end{pmatrix}
\]
in this case. For 
\[
R=\begin{pmatrix}
I & -t^{-1}\overline{M}_{i-1}^1 \\
0 & I
\end{pmatrix}, 
\]
multiplying from the right gives
\[
M_i^{t}R = \begin{pmatrix}
t\overline{M}_i^{t-1} & 0
\end{pmatrix}.
\]
So for $M_i^t$ to have full rank, we require that $\overline{M}_i^{t-1}$, or equivalently $\cdot \overline{\ell}^{t-1}:B_i \to B_{i+t-1}$, is surjective. Finally, if both $i=0$ and $B_{i+t}=0$, then 
\[
M_0^t = t\overline{M}_0^{t-1},
\]
so $M_0^t$ has full rank if and only if $\cdot \overline{\ell}^{t-1}:B_0 \to B_{t-1}$ has full rank and the determinant formula follows.
\end{proof}

Note that if $i>0$ and $B_{i+t}=0$, then it does not suffice knowing that $\cdot \overline{\ell}^{t-1}:B_i \to B_{i+t-1}$ has full rank to conclude that $\cdot \ell^t:A_i \to A_{i+t}$ has full rank, it must be surjective. For example, if $B=k[y,z]/(y,z)^3$, then $\cdot (y+z):B_1 \to B_2$ has full rank by injectivity, but $\cdot (x+y+z)^2:A_1 \to A_3$ for $A=B\otimes k[x]/(x^2)$ does not have full rank. 

Before we give an example for how Theorem~\ref{thm:recursion_quadratic} can be used, we need to recall the definition of a Hilbert series.

\begin{definition}
Let $A$ be a standard graded artinian $k$-algebra. The largest degree $D$ such that $\dim_k A_D\neq 0$ is called the \emph{socle degree} of $A$. For such an algebra of socle degree $D$, the \emph{Hilbert series} of $A$ is given by
\[
\HS(A;T) = \sum_{i=0}^{D}\dim_k (A_i) T^i.
\]
\end{definition}

In the case when $A$ has a symmetric Hilbert function, meaning that $\dim_k A_i = \dim_k A_{D-i}$ for all $i$, then it suffices to establish that all multiplication maps between these complementary degrees have full rank to show that $A$ has the strong Lefschetz property \cite[p. 104]{Lefschetz_book}. With this tool we can now use Theorem~\ref{thm:recursion_quadratic} to study one important example, namely the quadratic monomial complete intersection.

\begin{proposition}\label{prop:Quadratic_det}
Let $A=k[x_1,\dots, x_n]/(x_1^2,\dots, x_n^2)$ be a quadratic monomial complete intersection over a field of characteristic zero and let $\ell=x_1+\dots + x_n$ be a linear form. Next, denote the determinant of
\[
\cdot \ell^{n-2i}:A_i \to A_{n-i}
\]
by $a_{n,i}$. Then
\[
a_{n,i} = (-1)^{\binom{n-1}{i-1}\left(\binom{n-1}{i} + 1 \right)} \cdot a_{n-1,i} \cdot a_{n-1,i-1} \cdot (n-2i)^{\binom{n-1}{i}-\binom{n-1}{i-1}}
\]
for any $0\leq i<n/2$ with the convention that $a_{n,-1}=1$. In particular, $a_{n,i}\neq 0$ and $A$ has the strong Lefschetz property.
\end{proposition}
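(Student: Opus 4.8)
The plan is to realize $A$ as a square-zero extension and read the recursion straight off the determinant formulas already in hand. Write
\[
A = B\otimes_k k[x_n]/(x_n^2), \qquad B = k[x_1,\dots,x_{n-1}]/(x_1^2,\dots,x_{n-1}^2),
\]
and set $\overline{\ell} = x_1 + \dots + x_{n-1}$, so that $\ell = \overline{\ell} + x_n$. Since $\HS(B;T) = (1+T)^{n-1}$ we have $\dim_k B_j = \binom{n-1}{j}$ and $B$ has socle degree $n-1$. Putting $t = n-2i$, the two maps on $B$ appearing in Theorem \ref{thm:recursion_quadratic}, namely $\cdot\,\overline{\ell}^{\,t-1}: B_i \to B_{i+t-1}$ and $\cdot\,\overline{\ell}^{\,t+1}: B_{i-1}\to B_{i+t}$, specialize to $\cdot\,\overline{\ell}^{\,(n-1)-2i}: B_i \to B_{(n-1)-i}$ and $\cdot\,\overline{\ell}^{\,(n-1)-2(i-1)}: B_{i-1}\to B_{(n-1)-(i-1)}$. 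First I would observe that these are exactly the maps between complementary degrees of $B$ whose determinants define $a_{n-1,i}$ and $a_{n-1,i-1}$.

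For $i\geq 1$ I would then apply Theorem \ref{thm:recursion_quadratic} with this $t = n-2i$. Its hypotheses hold: $t\geq 1$ is invertible since $\mathrm{char}\,k = 0$, and $B_{i+t} = B_{n-i}\neq 0$ because $n-i\leq n-1$ when $i\geq 1$. The two matrices are square of sizes $\dim_k B_i = \binom{n-1}{i}$ and $\dim_k B_{i-1} = \binom{n-1}{i-1}$, so substituting $N = \binom{n-1}{i}$, $m = \binom{n-1}{i-1}$ and $\alpha = t = n-2i$ into the determinant formula reproduces the claimed recursion verbatim. The case $i=0$ is an edge case, since then $B_{i+t} = B_n = 0$; here I would instead invoke Proposition \ref{prop:quadratic_edge_case} in the situation $i = 0$, $B_t = 0$, $\dim_k B_{t-1} = \binom{n-1}{n-1} = 1$, which yields $a_{n,0} = n\,a_{n-1,0}$. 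This matches the general formula once it is read with the conventions $\binom{n-1}{-1} = 0$ and $a_{n-1,-1} = 1$.

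Finally I would deduce $a_{n,i}\neq 0$ by induction on $n$, proving the slightly stronger statement that $a_{n,i}\neq 0$ for all $0\leq i\leq n/2$. The boundary case $i = n/2$ (for $n$ even) is the identity map, so $a_{n,n/2} = 1$; for $0\leq i < n/2$ the recursion writes $a_{n,i}$ as a product of $a_{n-1,i}$, $a_{n-1,i-1}$, a sign, and a power of $n-2i$, and each factor is nonzero by the induction hypothesis (using $a_{n-1,-1}=1$ and the identity-map boundary value at $i=(n-1)/2$) together with $n-2i\geq 1$ in characteristic zero. With the base case $a_{1,0}=1$ this gives $a_{n,i}\neq 0$. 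The strong Lefschetz property then follows: the matrices $M_i^{n-2i}$ are square because $\HS(A;T) = (1+T)^n$ is symmetric, so a nonzero determinant means $\cdot\,\ell^{n-2i}: A_i\to A_{n-i}$ has full rank for every pair of complementary degrees, and by the criterion in \cite[p.~104]{Lefschetz_book} this suffices to conclude that $A$ has the SLP.

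The main obstacle is bookkeeping rather than any genuinely hard step: one must check that the powers and degrees in Theorem \ref{thm:recursion_quadratic} specialize precisely to the complementary-degree maps of $B$, and fold the $i=0$ edge case into the uniform formula through the binomial convention. One should also note that the determinant formulas of Theorem \ref{thm:recursion_quadratic} and Proposition \ref{prop:quadratic_edge_case} are here applied to the specific linear form $\ell = x_1 + \dots + x_n$; this is legitimate because those formulas follow purely from the block decomposition of $M_i^t$, which is valid for the concrete choice $\overline{\ell} = x_1 + \dots + x_{n-1}$ and requires no genericity.
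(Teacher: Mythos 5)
Your proposal is correct and follows essentially the same route as the paper's own proof: write $A=B\otimes_k k[x_n]/(x_n^2)$, apply Theorem \ref{thm:recursion_quadratic} with $t=n-2i$ for $i\geq 1$ and Proposition \ref{prop:quadratic_edge_case} for the $i=0$ edge case, then induct on $n$ to get non-vanishing and the SLP. Your explicit handling of the boundary value $a_{n-1,(n-1)/2}=1$ (the identity map, needed when $n$ is odd) and the remark that the block-decomposition formulas apply to the concrete form $\ell=x_1+\dots+x_n$ are small refinements in rigor, not a different argument.
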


\begin{proof}
We begin by noting that $a_{1,0}=1\neq 0$, so we may assume that $n>1$ and that $a_{j,i}\neq 0$ for any $j<n$ and $0\leq i<j/2$. Write $A=B\otimes_k k[x_n]/(x_n^2)$ where $B=k[x_1,\dots, x_{n-1}]/(x_1^2,\dots, x_{n-1}^2)$. We know that the Hilbert series of $B$ is given by 
\[
\HS(B;T)= \sum_{j=0}^{n-1}\binom{n-1}{j}T^j.
\]
Therefore, if we set $\overline{\ell}=x_1+\cdots + x_{n-1}$, then both of the maps 
\[
\cdot \overline{\ell}^{n-1-2i}: B_i \to B_{n-1-i} \text{ and } \cdot \overline{\ell}^{n-1-2(i-1)}: B_{i-1} \to B_{n-1-(i-1)}
\]
are maps between vector spaces of the same dimensions given by $\binom{n-1}{i}$ and $\binom{n-1}{i-1}$ respectively. Hence Theorem~\ref{thm:recursion_quadratic} gives for $i\neq 0$ that 
\[
a_{n,i} = (-1)^{\binom{n-1}{i-1}\left(\binom{n-1}{i} + 1 \right)}\cdot a_{n-1,i} \cdot a_{n-1,i-1}\cdot (n-2i)^{\binom{n-1}{i}-\binom{n-1}{i-1}}
\]
as desired. When $i=0$, we also have that $B_{n}=0$, so an application of Proposition \ref{prop:quadratic_edge_case} gives that $a_{n,0}=n\cdot a_{n-1,0}$. Using our convention that $a_{n,-1}=1$, this agrees with the claimed recursion formula. Since $a_{n-1,i}\cdot a_{n-1,i-1}\neq 0$ by assumption, $a_{n,i}$ is also non-zero and we have established that $A$ has the strong Lefschetz property. 
\end{proof}

We should note that Proposition \ref{prop:Quadratic_det} is not the first result to give a recursive formula for the determinants $a_{n,i}$. A similar formula was derived by Hara and Watanabe in \cite{Watanabe_Boolean} where they studied certain matrices coming from the Boolean lattice. In their work, $a_{n,i}$ appears as the determinant of the matrix $M$ in their Proposition 6 by setting $b=i$ and where the recursion for this determinant is then given in their Theorem 4.

When the characteristic $p$ of $k$ divides $t$, Theorem \ref{thm:recursion_quadratic} can not be used. However, another recursion can be salvaged.

\begin{lemma}
\label{lem:p|t_recursion}
Pick a prime $p$ and a positive integer $t$ where $p$ divides $t$ and let $A=B\otimes_k k[x]/(x^2)$ where the characteristic of $k$ is $p$. Assume that $i>0$ and $B_{i+t}\neq 0$. Then $\cdot \ell^t:A_i \to A_{i+t}$ has full rank if and only if 
\[
\cdot \overline{\ell}^{t}: B_i \to B_{i+t} \text{ and } \cdot \overline{\ell}^{t}: B_{i-1} \to B_{i+t-1}
\]
have full rank for the same reason.
\end{lemma}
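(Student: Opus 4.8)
The plan is to use the hypothesis $p \mid t$ to collapse the block decomposition of $M_i^t$ into a block-diagonal form, after which full rank reduces to a simple statement about rank additivity.

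First I would recall from \eqref{eq:ell_decomp_quad} that $\ell^t = \overline{\ell}^t + t x \overline{\ell}^{t-1}$ in $A$. Since $p \mid t$ and the characteristic of $k$ is $p$, the coefficient $t$ vanishes in $k$, so in fact $\ell^t = \overline{\ell}^t$. Because $i > 0$ and $B_{i+t}\neq 0$ (which, as $B$ is standard graded and therefore has its nonzero degrees forming an interval $[0,D]$, also forces $B_{i-1}$ and $B_{i+t-1}$ to be nonzero), the block decomposition established just before Lemma~\ref{lem:Block_quadratic} applies, and its off-diagonal block $t\overline{M}_i^{t-1}$ is now zero. Hence
\[
M_i^t = \begin{pmatrix} \overline{M}_i^t & 0 \\ 0 & \overline{M}_{i-1}^t \end{pmatrix},
\]
a block-diagonal matrix whose diagonal blocks represent exactly $\cdot \overline{\ell}^{t}: B_i \to B_{i+t}$ and $\cdot \overline{\ell}^{t}: B_{i-1} \to B_{i+t-1}$.

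The key elementary fact is then $\rank(M_i^t) = \rank(\overline{M}_i^t) + \rank(\overline{M}_{i-1}^t)$, and I would deduce both implications from a dimension count. For the ``if'' direction: if both blocks are injective, each attains rank equal to its number of columns, so $\rank(M_i^t) = \dim_k B_i + \dim_k B_{i-1} = \dim_k A_i$, making $M_i^t$ injective and hence full rank; the surjective case is symmetric, using row counts. For the ``only if'' direction, suppose $M_i^t$ has full rank. If $M_i^t$ is injective, its rank equals $\dim_k A_i = \dim_k B_i + \dim_k B_{i-1}$, and since $\rank(\overline{M}_i^t)$ and $\rank(\overline{M}_{i-1}^t)$ are each bounded by the respective column count, the equality forces both blocks to be injective; if instead $M_i^t$ is surjective, the same argument on the row counts forces both blocks to be surjective. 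Either way the two maps have full rank for the same reason.

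The argument is essentially routine once the block-diagonal shape is established; the only point requiring a little care is the ``only if'' direction, where one must exclude the mixed scenario in which one block is injective-but-not-surjective and the other surjective-but-not-injective. This is exactly what rank additivity rules out: full rank of $M_i^t$ forces $\rank(M_i^t) = \min(\dim_k A_i, \dim_k A_{i+t})$, and the decomposition $\rank(M_i^t) = \rank(\overline{M}_i^t) + \rank(\overline{M}_{i-1}^t)$ then pins both blocks down to the same type.
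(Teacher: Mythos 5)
Your proposal is correct and follows essentially the same route as the paper: the hypothesis $p \mid t$ kills the off-diagonal block $t\overline{M}_i^{t-1}$, leaving the block-diagonal matrix $\begin{pmatrix} \overline{M}_i^t & 0 \\ 0 & \overline{M}_{i-1}^t \end{pmatrix}$, from which the equivalence follows. The paper dismisses the remaining step as immediate, whereas you spell out the rank-additivity and dimension-count argument; this is a harmless (and arguably welcome) elaboration, not a different approach.
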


\begin{proof}
This is immediate since the matrix $M_i^t$ can be written as
\[
M_i^t=
\begin{pmatrix}
\overline{M}_i^t & 0 \\
t\overline{M}_i^{t-1} & \overline{M}_{i-1}^t
\end{pmatrix}
=
\begin{pmatrix}
\overline{M}_i^t & 0 \\
0 & \overline{M}_{i-1}^t
\end{pmatrix},
\]
where we used that $t=0$ in $k$ since $p$ divides $t$. 
\end{proof}

In the same vain as earlier, one can take care of the cases when $i=0$ or $B_{i+t}=0$. One then sees that $\cdot \ell^t:A_i\to A_{i+t}$ has full rank when $i=0$ and $B_{t}\neq 0$ if and only if $\cdot \overline{\ell}^{t-1}:B_0 \to B_{t-1}$ has full rank, when $i>0$ and $B_{i+t}=0$ if and only if $\cdot \overline{\ell}^{t}: B_{i-1} \to B_{i+t-1}$ is surjective, and when $i=0$ and $B_{i+t}=0$ it never has full rank. 

Combining the results of this section, we can now say exactly which powers of a general linear form, or equivalently which power of the sum of the variables \cite[Theorem 2.2]{Sum_of_var_SLP}, that give full rank maps in the quadratic monomial complete intersection, extending the already known results for when such an algebra has the weak or strong Lefschetz property.

\begin{theorem}
\label{thm:quadratic_criterion}
Let $A=k[x_1,\dots, x_n]/(x_1^2,\dots, x_n^2)$ be a quadratic monomial complete intersection over a field of characteristic $p$. For $i\geq 0$, $t\geq 1$ and a general linear form $\ell$, we have that $\cdot \ell^t:A_i \to A_{i+t}$ has full rank if and only if $i+t>n$ or
\[
p>\min\{i+t,n-i\}.
\]
\end{theorem}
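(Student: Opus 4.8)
The plan is to induct on the number of variables $n$, peeling off one variable via the decomposition $A=B\otimes_k k[x_n]/(x_n^2)$ with $B=k[x_1,\dots,x_{n-1}]/(x_1^2,\dots,x_{n-1}^2)$, $\overline\ell=x_1+\dots+x_{n-1}$ and $\ell=\overline\ell+x_n$, and to feed this into exactly the four reduction results of this section: Theorem~\ref{thm:recursion_quadratic} and Proposition~\ref{prop:quadratic_edge_case} when $t$ is invertible in $k$, and Lemma~\ref{lem:p|t_recursion} together with its subsequent edge-case discussion when $p\mid t$. Here $\dim_k B_j=\binom{n-1}{j}$, the socle degree of $B$ is $n-1$, and in characteristic zero the condition $p>\min\{i+t,n-i\}$ is read as always satisfied (consistent with Proposition~\ref{prop:Quadratic_det}). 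The base case $n=1$ is checked directly, and the case $i+t>n$ is trivial since then $A_{i+t}=0$. So I would assume $1\le i+t\le n$ and that the criterion holds for $B$, i.e. $\cdot\overline\ell^{\,s}\colon B_j\to B_{j+s}$ has full rank if and only if $j+s>n-1$ or $p>\min\{j+s,(n-1)-j\}$.

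First I would treat the interior range $i\ge 1$, $i+t\le n-1$, where $B_{i+t}\neq 0$. If $t$ is invertible, Theorem~\ref{thm:recursion_quadratic} reduces full rank of $\cdot\ell^t$ to the pair $\cdot\overline\ell^{\,t-1}\colon B_i\to B_{i+t-1}$ and $\cdot\overline\ell^{\,t+1}\colon B_{i-1}\to B_{i+t}$ having full rank for the same reason. By the inductive hypothesis the first has full rank exactly when $p>\min\{i+t,n-i\}-1$ and the second exactly when $p>\min\{i+t,n-i\}$, so the second is the binding condition. The key point is that when both have full rank they do so for the same reason: using that for $a<b$ one has $\binom{N}{a}<\binom{N}{b}$ precisely when $a+b<N$, with the inequality reversing when $a+b>N$ and equality when $a+b=N$, the source-versus-target dimension comparison for \emph{both} maps is governed by the single sign of $2i+t-n$, so both are simultaneously injective, simultaneously surjective, or simultaneously bijective. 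Hence in this range $\cdot\ell^t$ has full rank iff $p>\min\{i+t,n-i\}$. If instead $p\mid t$, Lemma~\ref{lem:p|t_recursion} reduces to $\cdot\overline\ell^{\,t}$ on $B_i$ and on $B_{i-1}$; but $p\mid t$ forces $p\le t\le n-1-i<n-i$ together with $p\le i+t$, so $p\le\min\{i+t,n-i\}$, the formula predicts failure, and indeed $\cdot\overline\ell^{\,t}\colon B_i\to B_{i+t}$ already fails the inductive hypothesis. Both subcases agree with the claimed criterion.

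Next I would handle the two boundary families, where the reduction is to a \emph{single} map on $B$. If $i=0$ then $\min\{i+t,n-i\}=t$, and Proposition~\ref{prop:quadratic_edge_case} (for $t$ invertible) or the $p\mid t$ discussion reduces $\cdot\ell^t$ to a map with one-dimensional source out of $B_0$, namely $\cdot\overline\ell^{\,t-1}$ or $\cdot\overline\ell^{\,t}$. If $i\ge 1$ and $i+t=n$, then $B_{i+t}=0$, $\min\{i+t,n-i\}=n-i$, and the reduction is to surjectivity of $\cdot\overline\ell^{\,t-1}$ or $\cdot\overline\ell^{\,t}$ onto the one-dimensional top of $B$. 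In each family the inductive hypothesis yields a threshold that is apparently one short of what is wanted (for instance $p>t-1$ when $i=0$, or $p>n-1-i$ when $i+t=n$). The reconciliation, which is the crux of the bookkeeping, is that the unique value of $p$ at which the invertible-case threshold and the desired threshold $p>\min\{i+t,n-i\}$ disagree is $p=t$ itself (and at the boundary $i+t=n$ one has $n-i=t$, so again the disagreement occurs only at $p=t$); but $p=t$ satisfies $p\mid t$ and is therefore excluded from the invertible regime, being governed instead by the $p\mid t$ regime, where the map is forced to fail. Stitching the two regimes together upgrades $p>t-1$ to $p>t$ and $p>n-1-i$ to $p>n-i$, as required.

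The main obstacle is not any single computation but the careful dovetailing of the four reduction regimes into one uniform statement: one must verify that the injective-versus-surjective ``reason'' always lines up, which is exactly what the binomial inequality governed by the sign of $a+b-N$ delivers, and that the off-by-one thresholds produced separately in the invertible and non-invertible cases patch together correctly at the single prime $p=t$ sitting on the boundary $i+t=n$ or at $i=0$. Once these alignments are in place the induction closes, and specializing to complementary degrees recovers the known characteristic thresholds for the weak and strong Lefschetz properties of the quadratic monomial complete intersection.
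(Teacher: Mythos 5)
Your proof is correct, and its skeleton---induction on $n$ through $A=B\otimes_k k[x_n]/(x_n^2)$ and the reduction results of Section~\ref{sec:Quadratic}---is the same as the paper's; in particular your interior case, including the observation that both maps in Theorem~\ref{thm:recursion_quadratic} compare source and target dimensions via the single sign of $2i+t-n$ (both index sums equal $2i+t-1$, measured against $n-1$), is exactly the paper's unimodality argument made explicit. Where you genuinely diverge is at the boundaries: the paper does \emph{not} run the recursion there, but computes directly that for $i+t=n$ or $i=0$ the map is governed by $\ell^t=t!\sum_{i_1<\dots<i_t}x_{i_1}\cdots x_{i_t}$, so full rank holds iff $t!\neq 0$ in $k$, i.e.\ $p>t$, with no off-by-one threshold to repair. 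You instead push the induction through Proposition~\ref{prop:quadratic_edge_case} and the post-Lemma~\ref{lem:p|t_recursion} discussion, and patch the thresholds $p>t-1$ versus $p>t$ by noting the sole disagreement $p=t$ lies in the $p\mid t$ regime, where failure is forced; this works (the targets $B_{t-1}$, $B_{n-1}$ being one-dimensional makes full rank equivalent to the surjectivity that Proposition~\ref{prop:quadratic_edge_case} demands), but be aware it leans on the edge-case remark after Lemma~\ref{lem:p|t_recursion}, whose first clause as printed ($\cdot\overline{\ell}^{\,t-1}:B_0\to B_{t-1}$ when $i=0$ and $B_t\neq 0$) appears to be a typo: with $t=0$ in $k$ one gets $M_0^t=\left(\begin{smallmatrix}\overline{M}_0^t\\ 0\end{smallmatrix}\right)$, so the correct reduction is to $\cdot\overline{\ell}^{\,t}:B_0\to B_t$, which is what your argument implicitly uses---with the literal $\overline{\ell}^{\,t-1}$ your patching would wrongly yield full rank at $p=t$. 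Your interior $p\mid t$ case is also cleaner than the paper's: since $p\le t$ forces $p<\min\{i+t,n-i\}$ whenever $i\geq 1$ and $i+t\leq n-1$, it suffices that one of the two maps in Lemma~\ref{lem:p|t_recursion} fails by induction, avoiding the paper's case split on whether $2i+t=n$. Net: the paper's direct socle computation buys self-contained boundary cases; your route buys uniformity of the induction, at the price of the $p=t$ bookkeeping and of needing the corrected form of the edge-case remark.
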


\begin{proof}
We will prove the result by induction on $n$. If $n=1$, then any power of $\ell=x_1$ has full rank on $k[x_1]/(x_1^2)$, and since $\min\{i+t,n-i\}\leq 1$, the criterion agrees in this case. Assume now that we have some larger $n\geq 2$ and that the statement is true in $n-1$ variables. We begin by dealing with some edge cases.

If $i+t>n$, then $A_{i+t}=0$, so any map there has full rank. If instead $i+t=n$, then $A_{i+t}$ is generated by the monomial $x_1\cdots x_n$ and the map determined by $$\ell^t=(x_1+\cdots + x_n)^t=t!\left(\sum_{i_1<\dots<i_t}x_{i_1}\cdots x_{i_t}\right)$$
surjects onto this element from $A_i$ if and only if $t!\neq 0$ in $k$. That is, if $p>t$. But $i+t=n$ gives that
\[
\min\{i+t,n-i\} = \min\{n,t\} = t,
\]
so $p>t$ is exactly the criteria we require. Similarly, if $i=0$, we get that the map has full rank if and only if $\ell^t\neq 0$, which therefore happens exactly when $p>t$. Here $i=0$ gives that
\[
\min\{i+t,n-i\} = \min\{t,n\} = t,
\]
so $p>t$ is again the criteria we require.

Assume now that $i>0$, $i+t<n$ and that $p$ does not divide $t$. Then Theorem~\ref{thm:recursion_quadratic} gives that $\cdot \ell^t:A_i \to A_{i+t}$ has full rank if and only if 
\[
\cdot \overline{\ell}^{t-1}: B_i \to B_{i+t-1} \text{ and } \cdot \overline{\ell}^{t+1}: B_{i-1} \to B_{i+t}
\]
both have full rank for the same reason where $B=k[x_1,\dots, x_{n-1}]/(x_1^2,\dots, x_{n-1}^2)$. Since $\dim_k B_i = \binom{n-1}{i}$ forms a symmetric and unimodal sequence, we get that the maps must either both be injective or both be surjective if they have full rank. By induction, we therefore get that they both have full rank if and only if
\[
p>\min\{i+t-1,n-1-i\} \text{ and } p>\min\{i-1+t+1,(n-1)-(i-1)\}=\min\{i+t,n-i\}.
\]
Hence they have full rank if and only if $p>\min\{i+t,n-i\}$ as desired.

Assume now that $i>0$, $i+t<n$ and that $p$ divides $t$. By Lemma \ref{lem:p|t_recursion}, we then know that our map has full rank if and only if 
\[
\cdot \overline{\ell}^{t}: B_i \to B_{i+t} \text{ and } \cdot \overline{\ell}^{t}: B_{i-1} \to B_{i+t-1}
\]
both have full rank for the same reason. From our knowledge of the binomial coefficients we get that the only time when these may have full rank for different reasons is when the first map is only surjective and the second map is only injective, which happens exactly when $i+(i+t)=2i+t>n-1$ and $(i-1)+(i-1+t)=2i+t-2<n-1$. That is, when $2i+t=n$. But then $n-i=i+t$ and $p\leq i+t=\min\{i+t,n-1\}$ since $p$ divides $t$ and $i> 0$. Hence the criterion is correct in this case as well.

Finally, the only case left to treat is when $i>0$, $i+t<n$, $p$ divides $t$ and $i+t\neq n-i$. Then the maps 
\[
\cdot \overline{\ell}^{t}: B_i \to B_{i+t} \text{ and } \cdot \overline{\ell}^{t}: B_{i-1} \to B_{i+t-1}
\] 
both have full rank for the same reason if they both have full rank. By induction, they both have full rank if and only if
\[
p>\min\{i+t,n-1-i\} \text{ and } p>\min\{i-1+t,n-i\},
\]
which, using that $i+t\neq n-i$, happens if and only if $p>\min\{i+t,n-i\}$ and we are done.
\end{proof}

\section{The recursion for general $d$}\label{sec:recursion}

Having found how adding a new variable $x$ with $x^2=0$ effects maximal rank properties for a general linear form, let us now examine what happens when we are allowing for more non-zero powers of $x$. That is, what happens if we are adding $x$ with $x^d=0$ for some general $d\geq 2$?

Fix some artinian $k$-algebra $B$ and let 
\[
A=B\otimes_k k[x]/(x^d)
\]
where $x$ is some variable not appearing in $B$ and where $d\geq 2$. We want to determine when a power of a general linear form $\cdot \ell^t:A_i \to A_{i+t}$ has full rank in terms of powers of linear forms on $B$. To simplify the presentation of the proof, we begin with the assumptions that $i,t\geq d-1$ and $B_{i+t}\neq 0$. 

We will start similarly to how we did in the quadratic case. If we write $\ell=\overline{\ell} + x$ where $\overline{\ell}$ is a generic linear form in $B$, then
\[
\ell^t = \overline{\ell}^t + tx\overline{\ell}^{t-1} + \dots + \binom{t}{d-1}x^{d-1}\overline{\ell}^{t-(d-1)}.
\]
Next, write $\mathcal{A}_j$ for a basis of $A_j$ and $\mathcal{B}_j$ for a basis of $B_j$. Then we can choose $\mathcal{A}_j$ for $i+t\geq j\geq i$ such that
\[
\mathcal{A}_j = \mathcal{B}_{j} \sqcup x\mathcal{B}_{j-1} \sqcup \dots \sqcup x^{d-1}\mathcal{B}_{j-(d-1)}.
\]

Let $M=M_{i}^t$ be the matrix representing $\cdot \ell^t:A_i \to A_{i+t}$ in this basis and $\overline{M}_i^t$ the matrix representing $\cdot \overline{\ell}^t:B_i \to B_{i+t}$. By looking at how each term $\binom{t}{j}x^j\overline{\ell}^{t-j}$ of $\ell^t$ acts on each part of the decomposition of $\mathcal{A}_i$, we get that $M$ gets the block matrix decomposition
\begin{equation}
\label{eq:big_matrix}
M=M_i^t = 
\begin{pmatrix}
\overline{M}_i^t & 0 & \cdots & 0 & 0 \\
t\overline{M}_{i}^{t-1} & \overline{M}_{i-1}^{t} & \cdots & 0 & 0 \\
\vdots & \vdots & & \vdots & \vdots \\
\binom{t}{d-2}\overline{M}_{i}^{t-(d-2)} & \binom{t}{d-3}\overline{M}_{i-1}^{t-(d-3)} & \cdots & \overline{M}_{i-(d-2)}^{t} & 0 \\
\binom{t}{d-1}\overline{M}_{i}^{t-(d-1)} & \binom{t}{d-2}\overline{M}_{i-1}^{t-(d-2)} & \cdots & t\overline{M}_{i-(d-2)}^{t-1} & \overline{M}_{i-(d-1)}^{t}
\end{pmatrix}.
\end{equation}
In other words, $M$ takes the form of a $d\times d$ matrix where the entry in position $(p,q)$ is given by the matrix
\[
\binom{t}{p-q}\overline{M}_{i+1-q}^{t-(p-q)}
\]
if $p\geq q$, and the zero matrix of appropriate size otherwise. All references to positions, rows or columns in this section will be using this view of matrix entries unless otherwise stated.

Looking at the blocks involved, we see that all of them can be factored to include $\overline{M}_{i}^{t-(d-1)}$ as a factor. To be precise, denote 
\[
P=\overline{M}_{i}^{t-(d-1)}, \quad U_j=\overline{M}_{t-d+i+j}^{1}, \quad D_j=\overline{M}_{i-j}^{1}
\]
for $j=1,2,\dots, d-1$. Then $M$ can be written as
\[
M = 
\begin{pmatrix}
U_{d-1}U_{d-2}\cdots U_1P & 0 & \cdots & 0 & 0 \\
tU_{d-2}U_{d-3}\cdots U_1P & U_{d-2}\cdots U_1PD_1 & \cdots & 0 & 0 \\
\vdots & \vdots & & \vdots & \vdots \\
\binom{t}{d-2}U_1P & \binom{t}{d-3}U_1PD_1 & \cdots & U_1PD_1\dots D_{d-2} & 0 \\[0.3em]
\binom{t}{d-1}P & \binom{t}{d-2}PD_1 & \cdots & tPD_1\dots D_{d-2} & PD_1\dots D_{d-1}
\end{pmatrix}.
\]
We are now in a similar setting to that of Lemma \ref{lem:Block_quadratic}. Our goal is therefore to try and find square matrices $L$ and $R$ with determinant one such that $LMR$ gets an easy to understand form, namely block anti-diagonal. We make the ansatz
\[
L=
\begin{pmatrix}
I & c_{1,d-1}U_{d-1} & c_{1,d-2}U_{d-1}U_{d-2} & \cdots & c_{1,1}U_{d-1}\dots U_1 \\
0 & I & c_{2,d-2}U_{d-2} & \cdots & c_{2,1}U_{d-2}\dots U_1\\
\vdots & \vdots & \vdots & & \vdots \\
0 & 0 & 0 & \cdots & c_{d-1,1}U_1\\
0 & 0 & 0 & \cdots & I
\end{pmatrix}
\]
and 
\[
R=
\begin{pmatrix}
I & \Tilde{c}_{d-1,1}D_{1} & \Tilde{c}_{d-2,1}D_{1}D_{2} & \cdots & \Tilde{c}_{1,1}D_{1}\dots D_{d-1} \\
0 & I & \Tilde{c}_{d-2,2}D_{2} & \cdots & \Tilde{c}_{1,2}D_{2}\dots D_{d-1}\\
\vdots & \vdots & \vdots & & \vdots \\
0 & 0 & 0 & \cdots & \Tilde{c}_{1,d-1}D_{d-1}\\
0 & 0 & 0 & \cdots & I
\end{pmatrix}
\]
where $c_{i,j}$ and $\Tilde{c}_{i,j}$ are some coefficients that are yet to be determined. 

\begin{lemma}\label{lem:well_defined_L&R}
The matrices $L$ and $R$ are well defined square matrices with determinant one.
\end{lemma}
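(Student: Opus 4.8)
The plan is to verify the claim by elementary bookkeeping on block sizes, the key point being that well-definedness and the determinant are both independent of the actual values of the scalar coefficients appearing in $L$ and $R$: whatever those scalars are, the two matrices are block upper triangular with identity matrices along the diagonal. First I would record the shapes of the elementary factors. Since $U_j=\overline{M}_{t-d+i+j}^{1}$ represents $\cdot\overline{\ell}\colon B_{t-d+i+j}\to B_{t-d+i+j+1}$, it has $\dim_k B_{t-d+i+j+1}$ rows and $\dim_k B_{t-d+i+j}$ columns, and likewise $D_j=\overline{M}_{i-j}^{1}$ has $\dim_k B_{i-j+1}$ rows and $\dim_k B_{i-j}$ columns.

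The first substantive step is that every product of $U$'s (respectively $D$'s) occurring as an off-diagonal block is defined. This holds because consecutive factors share a common graded piece of $B$: the columns of $U_j$ and the rows of $U_{j-1}$ are both counted by $\dim_k B_{t-d+i+j}$, and the columns of $D_p$ and the rows of $D_{p+1}$ are both counted by $\dim_k B_{i-p}$, so each telescoping product $U_{d-p}\cdots$ and $D_p\cdots$ composes.

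Next I would check that $L$ and $R$ are genuinely square. For $L$, reading off the block immediately to the right of the diagonal in block row $p$ shows it is a scalar multiple of $U_{d-p}$, whose row count $\dim_k B_{i+1+t-p}$ must therefore be the size of the diagonal identity block in position $(p,p)$. One then confirms, for an arbitrary block $(p,q)$ with $p<q$, that the leftmost factor of its product contributes exactly $\dim_k B_{i+1+t-p}$ rows and the rightmost factor exactly $\dim_k B_{i+1+t-q}$ columns, matching the sizes dictated by the diagonal blocks in row $p$ and column $q$; since these requirements are mutually consistent, the identity blocks fit and $L$ is square. The identical computation for $R$, where the diagonal block in position $(p,p)$ has size $\dim_k B_{i+1-p}$, shows that $R$ is square as well. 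I expect this degree-tracking through the telescoping products to be the main (if routine) obstacle, since one must confirm that the two ends of each product land in precisely the degrees forced by the diagonal identities.

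Finally, being block upper triangular with identity blocks on the diagonal, each of $L$ and $R$ has determinant equal to the product of the determinants of its diagonal blocks, namely $1$. Hence $\det L=\det R=1$, as claimed.
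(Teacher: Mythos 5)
Your proof is correct and takes essentially the same approach as the paper's: composability of the $U$- and $D$-products from matching graded pieces of $B$, block-size bookkeeping to show $L$ and $R$ are square, and the block upper triangular shape with identity diagonal blocks giving determinant one. The only cosmetic difference is that you pin down each diagonal identity's size (namely $\dim_k B_{i+1+t-p}$ for $L$ and $\dim_k B_{i+1-p}$ for $R$) and check consistency blockwise, whereas the paper compares the total row and column counts using $c(U_j)=r(U_{j-1})$; this is the same computation organized differently.
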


\begin{proof}
From the definition of $U_j=\overline{M}_{t-d+i+j}^{1}$ and $D_j=\overline{M}_{i-j}^{1}$, we know that $U_{d-1}\dots U_1$ and $D_{1}\dots D_{d-1}$ are well defined matrices. The fact that the rest of the matrices in $L$ and $R$ are well defined then follows. Next, let $r(U_j)$ denote the number of rows of $U_j$ and $c(U_j)$ the number of columns of $U_j$. Then 
\[
r(L)=r(U_{d-1}) + r(U_{d-2}) + \cdots + r(U_{1}) + c(U_1)
\]
where $c(U_1)$ comes from the fact the the size of the identity matrix appearing in the bottom right corner of $L$ has size $c(U_1)$. We also have that 
\[
c(L)=r(U_{d-1}) + c(U_{d-1}) + c(U_{d-2}) + \cdots + c(U_1)
\]
where $r(U_{d-1})$ comes from the fact the the size of the identity matrix appearing in the top left corner of $L$ has size $r(U_{d-1})$. But we know that $c(U_j)=r(U_{j-1})$ for $j=2,3,\dots, d-1$ from the definition of $U_j$, and these equalities gives that $r(L)=c(L)$ and $L$ is a square matrix. The argument for $R$ is analogous. Finally, $\det(L)=\det(R)=1$ since both are upper triangular square matrices with ones along the diagonal.
\end{proof}

The task now is to see if we can choose the constants $c_{i,j}$ and $\Tilde{c}_{i,j}$ such that $LMR$ gets our desired form. If we view all of $L, M, R$ as $d\times d$ matrices with matrix entries, then in position $(p,q)$, we find that $LM$ is given by
\begin{equation}
\label{eq:LM}
(LM)_{p,q} = \left(\binom{t}{p-q} + \sum_{j=p}^{d-1}c_{p,d-j}\binom{t}{j+1-q}\right)U_{d-p}\dots U_1PD_1\dots D_{q-1}.
\end{equation}
In particular, there is only one kind of block appearing in each position, just different scalar multiples of it. This is one motivation for our choice of the matrices $L$ (and $R$). Continuing the multiplication by $R$, we get
\begin{equation}
\label{eq:LMR}
(LMR)_{p,q} = (LM)_{p,q} + \sum_{j=1}^{q-1}\Tilde{c}_{d+1-q,j}(LM)_{p,j}D_j\dots D_{q-1}.
\end{equation}
Note that there is only one kind of block appearing in both $(LM)_{p,q}$ and the matrices $(LM)_{p,j}D_j\dots D_{q-1}$ for any $j$, namely scalar multiples of $U_{d-p}\dots U_1 P D_1\dots D_{q-1}$. Therefore we may talk about the scalar in front of $U_{d-p}\dots U_1 P D_1\dots D_{q-1}$ at position $(p,q)$ in $LM$ or $LMR$. Denote this scalar by 
\[
[(LM)_{p,q}] = \binom{t}{p-q} + \sum_{j=p}^{d-1}c_{p,d-j}\binom{t}{j+1-q}
\]
and
\[
[(LMR)_{p,q}] = [(LM)_{p,q}] + \sum_{j=1}^{q-1}\Tilde{c}_{d+1-q,j}[(LM)_{p,j}]
\]
respectively. 

We will now see how to pick $c_{i,j}$ and $\Tilde{c}_{i,j}$ such that $(LMR)_{p,q}=0$ if $p+q\neq d+1$, thus achieving the desired block anti-diagonal form. But before that, we observe that there is symmetry in our problem.

\begin{lemma}\label{lem:tilde(c)=c}
If we can choose only the constants $c_{i,j}$ to ensure that $(LMR)_{p,q}=0$ if $p+q<d+1$, then we can choose $\Tilde{c}_{i,j}$ such that $(LMR)_{p,q}=0$ if $p+q>d+1$. 
\end{lemma}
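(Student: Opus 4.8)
The plan is to strip the statement down to its scalar skeleton and then exploit a reflection symmetry of the binomial coefficients, so that no computation with the actual blocks $U_j, P, D_j$ is ever needed. First I would record that, by \eqref{eq:LM} and \eqref{eq:LMR}, the quantities $[(LM)_{p,q}]$ and $[(LMR)_{p,q}]$ depend only on the scalars in front of the single block type appearing in each position, not on the blocks themselves. Introducing the $d\times d$ matrix $\Sigma=(\binom{t}{p-q})_{p,q}$ (lower triangular, as $\binom{t}{p-q}=0$ for $p<q$), formula \eqref{eq:LM} says exactly that $[(LM)]=\Lambda\Sigma$, where $\Lambda$ is the upper unitriangular matrix with $\Lambda_{p,a}=c_{p,d+1-a}$ for $a>p$; similarly \eqref{eq:LMR} reads $[(LMR)]=\Lambda\Sigma\rho$, where $\rho$ is upper unitriangular with $\rho_{j,q}=\tilde c_{d+1-q,j}$ for $j<q$. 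As the parameters $c_{i,j}$ and $\tilde c_{i,j}$ range over all admissible off-diagonal entries, $\Lambda$ and $\rho$ range over all upper unitriangular matrices. The hypothesis is then exactly that some upper unitriangular $\Lambda$ makes $\Lambda\Sigma$ vanish strictly above the anti-diagonal (using that right multiplication by the unitriangular $\rho$ cannot disturb the region $\{p+q<d+1\}$), and the goal is to find an upper unitriangular $\rho$ making $\Lambda\Sigma\rho$ vanish strictly below it.

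The symmetry I would use is the anti-transpose $X\mapsto X^{\mathrm{at}}$, the reflection of the transpose in the anti-diagonal, given by $(X^{\mathrm{at}})_{p,q}=X_{d+1-q,d+1-p}$, equivalently $X^{\mathrm{at}}=EX^{T}E$ with $E$ the exchange matrix. It is an anti-automorphism, it sends upper unitriangular matrices to upper unitriangular matrices, and it carries the condition ``vanishes strictly above the anti-diagonal'' to ``vanishes strictly below the anti-diagonal'' and vice versa. The crucial point is that $\Sigma$ is fixed: since $\binom{t}{p-q}$ depends only on $p-q$ and $(d+1-q)-(d+1-p)=p-q$, we get $\Sigma^{\mathrm{at}}=\Sigma$.

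With this in hand the choice is forced, and it explains the name of the lemma: I would simply take $\rho=\Lambda^{\mathrm{at}}$, which unwinds to $\tilde c_{i,j}=c_{i,j}$. Then $\Sigma\rho=\Sigma^{\mathrm{at}}\Lambda^{\mathrm{at}}=(\Lambda\Sigma)^{\mathrm{at}}$, and since $\Lambda\Sigma$ vanishes above the anti-diagonal by hypothesis, $(\Lambda\Sigma)^{\mathrm{at}}$ vanishes below it. It then remains to check one elementary preservation fact, namely that left multiplication by an upper unitriangular matrix preserves vanishing below the anti-diagonal; this gives that $\Lambda\Sigma\rho=\Lambda\,(\Sigma\rho)$ still vanishes below the anti-diagonal, which is precisely $[(LMR)_{p,q}]=0$ for $p+q>d+1$, hence $(LMR)_{p,q}=0$ there.

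I expect the main difficulty to be bookkeeping rather than conceptual. The two points requiring care are the index dictionary translating $(c_{i,j},\tilde c_{i,j})$ into $(\Lambda,\rho)$, and the verification that $\Lambda^{\mathrm{at}}$ is genuinely realized by admissible parameters $\tilde c_{i,j}$, so that $\rho=\Lambda^{\mathrm{at}}$ is a legal choice within the ansatz for $R$ rather than something outside it; carrying out this translation is what produces the clean identity $\tilde c_{i,j}=c_{i,j}$. Once that is in place, the single identity $\Sigma^{\mathrm{at}}=\Sigma$ does all the real work.
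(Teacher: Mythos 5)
Your proof is correct and is essentially the paper's own argument: the paper likewise works at the level of the scalar coefficients, reduces the region $p+q<d+1$ to $LM$ and the region $p+q>d+1$ to $MR$, and then observes that $[(MR)_{d+1-q,d+1-p}]$ written with $\tilde{c}_{i,j}$ coincides with the formula for $[(LM)_{p,q}]$ written with $c_{i,j}$ — which is exactly your identity $\Sigma\rho=(\Lambda\Sigma)^{\mathrm{at}}$ together with $\Sigma^{\mathrm{at}}=\Sigma$ — before concluding that $\tilde{c}_{i,j}=c_{i,j}$ works. Your anti-transpose formalization is a clean repackaging of the same reflection symmetry, not a different route.
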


\begin{proof}
From Equation \eqref{eq:LMR}, we see that it suffices to choose $c_{i,j}$ such that $(LM)_{p,q}=0$ for all $p+q<d+1$ to ensure that $(LMR)_{p,q}=0$ for all such $p$ and $q$. Next, by first considering $MR$, we have that
\[
(MR)_{p,q}= \left(\binom{t}{p-q} + \sum_{i=1}^{q-1}\Tilde{c}_{d+1-q,i}\binom{t}{p-i}\right)U_{d-p}\dots U_1PD_1\dots D_{q-1}
\]
and moreover that
\[
(LMR)_{p,q} = (MR)_{p,q} + \sum_{j=p}^{d-1}c_{p,d-j}U_{d-p}\cdots U_{d-j}(MR)_{j+1,q}.
\]
So if $(MR)_{p,q}=0$ for all $p,q>d+1$, then $(LMR)_{p,q}=0$ for all such $p$ and $q$. We now note that
\begin{align*}
[(MR)_{d+1-q,d+1-p}] &= \binom{t}{p-q} + \sum_{i=1}^{d-p}\Tilde{c}_{p,i}\binom{t}{d+1-q-i} \\
&=\binom{t}{p-q} + \sum_{j=p}^{d-1}\Tilde{c}_{p,d-j}\binom{t}{j+1-q},
\end{align*}
which is the same expression as we have for $[(LM)_{p,q}]$ in \eqref{eq:LM} but with $\Tilde{c}_{i,j}$ instead of $c_{i,j}$. Since $p+q<d+1$ if and only if $(d+1-q)+(d+1-p)>d+1$, we therefore get that if we can find $c_{i,j}$ such that $(LMR)_{p,q}=0$ for all $p+q<d+1$, then picking $\Tilde{c}_{i,j}=c_{i,j}$ for all $i,j$ gives that $(LMR)_{p,q}=0$ for all $p+q>d+1$ as well.
\end{proof}

Lemma \ref{lem:tilde(c)=c} raises the question if we can choose the constants $c_{i,j}$ such that $(LMR)_{p,q}=0$ when $p+q<d+1$. The answer here might depend on the characteristic of our field $k$, but it is definitely yes in characteristic zero.

\begin{lemma}\label{lem:Solve_system}
Assume that the characteristic of $k$ is either zero or a prime that does not divide the determinant of
\[
C_{t,p,d} = \begin{pmatrix}
\binom{t}{d-1} & \binom{t}{d-2} & \cdots & \binom{t}{p} \\[0.5em]
\binom{t}{d-2} & \binom{t}{d-3} & \cdots & \binom{t}{p-1} \\
\vdots & \vdots & & \vdots \\
\binom{t}{p} & \binom{t}{p-1} & \cdots & \binom{t}{2p-(d-1)}
\end{pmatrix}.
\]
for any $p<d$. Then we can choose the constants $c_{i,j}$ and $\Tilde{c}_{i,j}$ such that $(LMR)_{p,q}=0$ if $p+q\neq d+1$. 
\end{lemma}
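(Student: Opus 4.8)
The plan is to reduce the vanishing of the strictly-upper blocks $(LMR)_{p,q}$ with $p+q<d+1$ to a collection of square linear systems in the constants $c_{i,j}$ — one system per row index $p$ — and to recognize the coefficient matrix of the $p$-th system as precisely $C_{t,p,d}$. By Lemma~\ref{lem:tilde(c)=c} it suffices to make all blocks with $p+q<d+1$ vanish using the $c_{i,j}$ alone and then set $\Tilde{c}_{i,j}=c_{i,j}$, so I only need to solve these systems.

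First I would note that in \eqref{eq:LM} every unknown occurring in $[(LM)_{p,q}]$ carries the first index $p$; hence unknowns attached to distinct rows do not interact, and the problem decouples into $d-1$ independent systems, one for each $p=1,\dots,d-1$ (the row $p=d$ imposes no condition, since $p+q<d+1$ is then impossible). For a fixed such $p$, requiring $[(LM)_{p,q}]=0$ for every $q$ with $p+q<d+1$, that is for $q=1,\dots,d-p$, gives exactly $d-p$ equations in the $d-p$ unknowns $c_{p,1},\dots,c_{p,d-p}$, so the system is square.

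Next I would put this system in matrix form. Substituting $k=d-j$ in the sum of \eqref{eq:LM} turns the coefficient of $c_{p,k}$ into $\binom{t}{d+1-k-q}$, so the equation indexed by $q$ reads
\[
\sum_{k=1}^{d-p}\binom{t}{d+1-k-q}\,c_{p,k} = -\binom{t}{p-q}.
\]
The coefficient matrix has $(q,k)$-entry $\binom{t}{d+1-k-q}$; reading off its entries shows it is the symmetric Hankel matrix with top-left entry $\binom{t}{d-1}$ whose entries decrease by one along each row and down each column, with bottom-right entry $\binom{t}{2p-(d-1)}$ — that is, exactly $C_{t,p,d}$. Under the hypothesis that the characteristic is zero or a prime not dividing $\det(C_{t,p,d})$, this matrix is invertible, so the system has a unique solution $(c_{p,1},\dots,c_{p,d-p})$. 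Doing this for each $p$ determines all the $c_{i,j}$ and forces $(LM)_{p,q}=0$, hence $(LMR)_{p,q}=0$, for all $p+q<d+1$; the latter follows from \eqref{eq:LMR} because every term $(LM)_{p,j}$ appearing there has $j<q$, so $p+j<d+1$ and that term already vanishes.

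Finally I would invoke Lemma~\ref{lem:tilde(c)=c}: setting $\Tilde{c}_{i,j}=c_{i,j}$ makes $(LMR)_{p,q}=0$ for all $p+q>d+1$ as well, so $(LMR)_{p,q}=0$ whenever $p+q\neq d+1$, as required. The main obstacle is the bookkeeping in the third step — correctly reindexing the sum in \eqref{eq:LM}, checking that the systems genuinely decouple by rows and are square, and matching the resulting Hankel coefficient matrix with the stated $C_{t,p,d}$. Once that identification is made, invertibility is handed to us directly by the characteristic hypothesis, and the symmetry lemma supplies the lower triangle for free.
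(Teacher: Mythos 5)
Your reduction and bookkeeping track the paper's own proof essentially step for step: by Lemma \ref{lem:tilde(c)=c} it suffices to make the blocks with $p+q<d+1$ vanish using the $c_{i,j}$ alone; for each fixed row $p$ the conditions $[(LM)_{p,q}]=0$ for $q=1,\dots,d-p$ decouple into a square linear system in $c_{p,1},\dots,c_{p,d-p}$; and your reindexed coefficient matrix with $(q,k)$-entry $\binom{t}{d+1-k-q}$ is exactly the matrix $C_{t,p,d}$ of the paper's system \eqref{eq:matrix equation}. Your propagation from $(LM)_{p,q}=0$ to $(LMR)_{p,q}=0$ via \eqref{eq:LMR} is also correct.

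However, there is one genuine gap, at the invertibility step. In characteristic zero the hypothesis of the lemma says nothing at all about $\det(C_{t,p,d})$, so invertibility is \emph{not} ``handed to us directly by the characteristic hypothesis'': you must prove that $\det(C_{t,p,d})\neq 0$ as a rational number, i.e., that this Hankel matrix of binomial coefficients is nonsingular. (In prime characteristic the issue does not arise, since the assumption that the characteristic does not divide the integer $\det(C_{t,p,d})$ already forces that determinant to be nonzero and a unit in $k$.) This non-vanishing is a substantive determinant evaluation, not a formality; the paper closes it by invoking Krattenthaler's closed product formula, Equation \eqref{eq:closed_determinant}, which exhibits $|\det(C_{t,p,d})|$ as a product of positive rationals (equivalently, by MacMahon's formula it counts plane partitions in a box, hence is a positive integer). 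Without some such input your argument establishes the lemma only in positive characteristic, while the characteristic-zero case is precisely the one on which the paper's main theorems rely.
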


\begin{proof}
By Lemma \ref{lem:tilde(c)=c} and Equation \eqref{eq:LMR}, it suffices to ensure that we can find $c_{i,j}$ such that $(LM)_{p,q}=0$ if $p+q<d+1$. Fix some row $p$ where $1\leq p \leq d-1$. We then require that $(LM)_{p,q}=0$ for all $q$ in the range $1\leq q \leq d-p$. By Equation~\eqref{eq:LM}, this gives a system of equations in the unknowns $c_{p,1},\dots, c_{p,d-p}$ of the form
\begin{equation}
\label{eq:matrix equation}
\begin{pmatrix}
\binom{t}{d-1} & \binom{t}{d-2} & \cdots & \binom{t}{p} \\[0.5em]
\binom{t}{d-2} & \binom{t}{d-3} & \cdots & \binom{t}{p-1} \\
\vdots & \vdots & & \vdots \\
\binom{t}{p} & \binom{t}{p-1} & \cdots & \binom{t}{2p-(d-1)}
\end{pmatrix} 
\begin{pmatrix}
c_{p,1}\\
c_{p,2} \\
\vdots \\
c_{p,d-p}
\end{pmatrix} = -
\begin{pmatrix}
\binom{t}{p-1} \\[0.5em]
\binom{t}{p-2} \\
\vdots \\
\binom{t}{2p-d}
\end{pmatrix}.
\end{equation}
Hence, if the characteristic of $k$ is a prime that does not divide the determinant of $C_{t,p,d}$, then the system has a solution as desired. We further note that $\det(C_{t,p,d})$ is known up to a sign. For example, we have the closed formula
\begin{equation}\label{eq:closed_determinant}
|\det (C_{t,p,d})| = \prod_{i=1}^{d-p}\prod_{j=1}^{p}\prod_{k=1}^{t-p}\frac{i+j+k-1}{i+j+k-2}
\end{equation}
given by Krattenthaler \cite[Equation 2.17]{Krattenthaler1999} since up to reordering of the columns, $C_{t,p,d}=M_{d-p}(p,t-p)$ in Krattenthaler's notation. In particular, $\det (C_{t,p,d})\neq 0$, so the system has a solution if the characteristic of $k$ is zero.
\end{proof}

Assume now that we are in a case where Lemma \ref{lem:Solve_system} applies, such as when our field $k$ has characteristic zero. Then we have shown that we can find matrices $L$ and $R$ such that $\det(M)=\det(LMR)$ and the only non-zero blocks of $LMR$ are along the anti-diagonal $p+q=d+1$. Moreover, using that also $(LM)_{p,j}=0$ if $j<d+1-p$, Equation \eqref{eq:LMR} gives that blocks on the anti-diagonal satisfy $(LMR)_{p,d+1-p} = (LM)_{p,d+1-p}$, and Equation \eqref{eq:LM} that $(LM)_{p,d+1-p}$ is given by
\[
\left(\binom{t}{2p-(d+1)} + \sum_{j=p}^{d-1}c_{p,d-j}\binom{t}{j+p-d}\right)U_{d-p}\dots U_1PD_1\dots D_{d-p}.
\]
Since we understand $U_{d-p}\dots U_1PD_1\dots D_{d-p}$ recursively as it represents the multiplication 
\[
\cdot \overline{\ell}^{d+1+t-2p}:B_{i-d+p}\to B_{t+i-p+1}
\]
in $B$, we need to understand $[(LM)_{p,d+1-p}]$. Recalling that the constants $c_{p,d-j}$ are obtained as solutions to the system \eqref{eq:matrix equation}, we can give the following nice expressions for $[(LM)_{p,d+1-p}]$.

\begin{proposition}
\label{prop:diagonal_coeff}
The coefficients $[(LM)_{p,d+1-p}]$ appearing along the anti-diagonal of $LMR$ are given by
\[
[(LM)_{p,d+1-p}] = \frac{\det (C_{t,p-1,d})}{\det(C_{t,p,d})}
\]
for $p=1,2,\dots, d-1$.
\end{proposition}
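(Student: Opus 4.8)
The plan is to interpret the claimed ratio as a bordered-determinant (Schur complement) identity for the Hankel matrices $C_{t,p,d}$. First I would evaluate Equation~\eqref{eq:LM} at the anti-diagonal position $q=d+1-p$; after reindexing the sum by $s=d-j$ this gives
\[
[(LM)_{p,d+1-p}] = \binom{t}{2p-d-1} + \sum_{s=1}^{d-p} c_{p,s}\binom{t}{p-s}.
\]
The key observation is that the column vector $\mathbf{b}=\left(\binom{t}{p-1},\binom{t}{p-2},\dots,\binom{t}{2p-d}\right)^{\top}$ formed by the coefficients $\binom{t}{p-s}$ is, up to sign, exactly the right-hand side of the system~\eqref{eq:matrix equation} defining the constants $c_{p,s}$. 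Thus if $\mathbf{c}=(c_{p,1},\dots,c_{p,d-p})^{\top}$ denotes the solution, then $C_{t,p,d}\,\mathbf{c}=-\mathbf{b}$, so the display above reads $\binom{t}{2p-d-1}+\mathbf{b}^{\top}\mathbf{c}=\binom{t}{2p-d-1}-\mathbf{b}^{\top}C_{t,p,d}^{-1}\mathbf{b}$, where the inverse exists because $\det(C_{t,p,d})\neq 0$ by~\eqref{eq:closed_determinant}.

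Next I would recognize this expression as a Schur complement. Setting
\[
\widetilde{C}=\begin{pmatrix} C_{t,p,d} & \mathbf{b} \\ \mathbf{b}^{\top} & \binom{t}{2p-d-1} \end{pmatrix},
\]
the standard formula gives $\det(\widetilde{C})=\det(C_{t,p,d})\left(\binom{t}{2p-d-1}-\mathbf{b}^{\top}C_{t,p,d}^{-1}\mathbf{b}\right)$, that is, $[(LM)_{p,d+1-p}]=\det(\widetilde{C})/\det(C_{t,p,d})$. It then remains to identify $\widetilde{C}$ with $C_{t,p-1,d}$. Since $C_{t,p,d}$ is the $(d-p)\times(d-p)$ Hankel matrix whose $(a,b)$-entry is $\binom{t}{d-a-b+1}$, the matrix $C_{t,p-1,d}$ of size $d-p+1$ obeys the same entry rule, so its top-left $(d-p)\times(d-p)$ block is exactly $C_{t,p,d}$; evaluating the Hankel rule along the final row and column shows these equal $\mathbf{b}^{\top}$ and $\mathbf{b}$, and the bottom-right corner equals $\binom{t}{2p-d-1}$. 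Hence $C_{t,p-1,d}=\widetilde{C}$ on the nose, and combining the two determinant identities yields the proposition.

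The main obstacle is the index bookkeeping: one must track the several off-by-one shifts carefully — in the reindexing $s=d-j$, in matching the coefficients $\binom{t}{p-s}$ against the right-hand side of~\eqref{eq:matrix equation}, and above all in verifying through the Hankel entry rule that the border adjoined to $C_{t,p,d}$ inside $C_{t,p-1,d}$ is precisely $(\mathbf{b},\mathbf{b}^{\top},\binom{t}{2p-d-1})$. Once the bordered matrix has been correctly identified, the algebraic content is just the Schur determinant formula, which is routine. Equivalently, one could avoid inverses altogether and instead expand $\det(C_{t,p-1,d})$ along its last row, using Cramer's rule for the $c_{p,s}$; this produces the same identity at the cost of slightly more combinatorial overhead.
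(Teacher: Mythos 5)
Your proof is correct, and it rests on the same structural observation as the paper's: since every $C_{t,\cdot,d}$ obeys the single Hankel entry rule $(a,b)\mapsto\binom{t}{d-a-b+1}$, the matrix $C_{t,p-1,d}$ is exactly $C_{t,p,d}$ bordered by the vector $\mathbf{b}=\bigl(\binom{t}{p-1},\dots,\binom{t}{2p-d}\bigr)^{\top}$ and the corner entry $\binom{t}{2p-d-1}$. Where the two arguments diverge is in the determinant machinery. The paper solves the system \eqref{eq:matrix equation} by Cramer's rule, $c_{p,j}=\det(C^{(j)}_{t,p,d})/\det(C_{t,p,d})$, and then matches the resulting sum term by term against the Laplace expansion of $\det(C_{t,p-1,d})$ along its last row; this requires tracking the signs $(-1)^{d-p-j+1}$ that arise from moving the swapped-in column $v_p$ to the border position. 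You instead write $\mathbf{c}=-C_{t,p,d}^{-1}\mathbf{b}$ and quote the Schur-complement identity $\det(\widetilde{C})=\det(C_{t,p,d})\bigl(\binom{t}{2p-d-1}-\mathbf{b}^{\top}C_{t,p,d}^{-1}\mathbf{b}\bigr)$, which absorbs all of that sign bookkeeping into a standard formula; this is cleaner to read, at the mild cost of invoking the inverse rather than working directly with the linear system. As you note yourself, the two routes are equivalent, and your "expansion" alternative at the end is literally the paper's proof. Two small points. First, your justification that $C_{t,p,d}$ is invertible cites \eqref{eq:closed_determinant}, which shows only that the determinant is a nonzero integer; in positive characteristic, invertibility over $k$ instead comes from the standing hypothesis of Lemma \ref{lem:Solve_system} (that the characteristic does not divide $\det(C_{t,p,d})$), under which the proposition is stated, so the argument goes through but the citation should be adjusted. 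Second, your reindexed expression $\binom{t}{2p-d-1}+\sum_{s=1}^{d-p}c_{p,s}\binom{t}{p-s}$ is the correct one, and it silently fixes an index typo in the paper's Equation \eqref{eq:LM_coeff}, where $c_{p,d-j}$ should read $c_{p,j}$ (the paper's subsequent Cramer's-rule step already uses the corrected form).
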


\begin{proof}
By Equation \eqref{eq:LMR}, we have that
\begin{equation}\label{eq:LM_coeff}
[(LM)_{p,d+1-p}] =\binom{t}{2p-(d+1)} + \sum_{j=1}^{d-p}c_{p,d-j}\binom{t}{p-j}.
\end{equation}

Next, we know that the $c_{p,j}$ are given as the solutions of the system \eqref{eq:matrix equation}. Cramer's rule then gives us that
\[
c_{p,j}=\frac{\det (C^{(j)}_{t,p,d})}{\det(C_{t,p,d})}
\]
where $C^{(j)}_{t,p,d}$ denotes the matrix obtained from $C_{t,p,d}$ by swapping out column $j$ in $C_{t,p,d}$ with 
\[ v_{p}=
-\begin{pmatrix}
\binom{t}{p-1} \\
\binom{t}{p-2} \\
\vdots \\
\binom{t}{2p-d}
\end{pmatrix}.
\]
Inserting this expression for $c_{p,j}$ into Equation \eqref{eq:LM_coeff} gives 
\begin{equation}\label{eq:LM_coeff_step2}
\det(C_{t,p,d})[LM_{p,d+1-p}] = \det(C_{t,p,d})\binom{t}{2p-(d+1)} + \sum_{j=1}^{d-p}\binom{t}{p-j}\det(C^{(j)}_{t,p,d}).
\end{equation}
To prove the proposition, we are thus left to show that the right hand side of Equation \eqref{eq:LM_coeff_step2} equals $\det(C_{t,p-1,d})$. To this end, we make two observations. Firstly, $C_{p,d}$ sits as the submatrix of $C_{p-1,d}$ obtained by removing the last row and column. Secondly, removing only the last row of $C_{p-1,d}$, the last column in the remaining matrix is exactly $-v_p$. Therefore, expanding $\det(C_{p-1,d})$ along the last row, we get that
\begin{equation}
\label{eq:LM_coeff_step3}
\det(C_{t,p-1,d}) = \sum_{j=1}^{d-(p-1)}(-1)^{d-p+1+j}\binom{t}{p-j}\det(\Tilde{C}_{t,p,d}^{(j)})
\end{equation}
where $\Tilde{C}_{t,p,d}^{(j)}$ for $j\neq d-(p-1)$ is the matrix obtained from $C_{t,p,d}$ by deleting column $j$ and inserting $-v_p$ as the last column, and $\Tilde{C}_{t,p,d}^{(d-(p-1))}=C_{t,p,d}$. Hence $\det(\Tilde{C}_{t,p,d}^{(j)})$ differs from $\det({C}^{(j)}_{t,p,d})$ by a factor of $(-1)^{d-p-j+1}$ coming from $d-p-j$ row switches and the difference of $-v_p$ and $v_p$. In other words, 
\[
\det(\Tilde{C}_{t,p,d}^{(j)}) = (-1)^{d-p-j+1}\det({C}^{(j)}_{t,p,d}).
\]
Combining this expression with Equation \eqref{eq:LM_coeff_step3} gives
\begin{align*}
\det(C_{t,p-1,d}) &= \sum_{j=1}^{d-(p-1)}(-1)^{2(d-p+1)}\binom{t}{p-j}\det({C}_{t,p,d}^{(j)}) \\
&=\det(C_{t,p,d})\binom{t}{2p-(d+1)} + \sum_{j=1}^{d-p}\binom{t}{p-j}\det({C}_{t,p,d}^{(j)}),  
\end{align*}
which is exactly the right hand side of Equation \eqref{eq:LM_coeff_step2}.
\end{proof}

We further note that $[(LM)_{d,1}]=\binom{t}{d-1}$ by Equation \eqref{eq:LM} and that $|\det(C_{t,0,d})|=1$ since $C_{t,0,d}$ has ones along the anti-diagonal and zeroes below it. Combining everything we have done so far, we have arrived at the first version of our main theorem.

\begin{theorem}
\label{thm:Rank-recursion_v1}
Let $A=B\otimes_k k[x]/(x^d)$ for some $d\geq 1$ where $B$ is an artinian $k$-algebra. Assume the characteristic of $k$ is either zero or a prime which does not divide $\prod_{p=1}^{d-1}\det(C_{t,p,d})$ where
\[
C_{t,p,d} = \begin{pmatrix}
\binom{t}{d-1} & \binom{t}{d-2} & \cdots & \binom{t}{p} \\[0.5em]
\binom{t}{d-2} & \binom{t}{d-3} & \cdots & \binom{t}{p-1} \\
\vdots & \vdots & & \vdots \\
\binom{t}{p} & \binom{t}{p-1} & \cdots & \binom{t}{2p-(d-1)}
\end{pmatrix}.
\]
If $i,t\geq d-1$ and $B_{i+t}\neq 0$, then the multiplication $\cdot \ell^t:A_i \to A_{i+t}$ for a general linear form $\ell = \overline{\ell} + x\in A_1$ has full rank if and only if all the maps 
\[
\cdot \overline{\ell}^{2q + t -(d-1)}:B_{i-q} \to B_{i+q+t-(d-1)}
\]
have full rank for the same reason for $q$ from $0$ to $d-1$. Moreover, if the matrices $\overline{M}_{i-q}^{2q+t-(d-1)}$ representing these maps are all square, then the matrix $M_i^t$ representing $\cdot \ell^t$ is square with
\begin{align*}
|\det(M_{i}^t)| = \left(\prod_{q=0}^{d-1}|\det(\overline{M}_{i-q}^{2q+t-(d-1)})| \right)\prod_{p=1}^{d-1}|\det(C_{t,p,d})|^{\dim_k(B_{i+p-(d-1)})-\dim_k(B_{i+p-d})}.
\end{align*}
\end{theorem}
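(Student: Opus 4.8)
The plan is to assemble the structural results established above into the two assertions, working throughout with absolute values of determinants so that no global sign has to be tracked. The statement for $d=1$ is trivial: there $A=B$, the family reduces to the single map $\cdot\overline{\ell}^{t}\colon B_i\to B_{i+t}$, which is $\cdot\ell^t$ itself, and the product over $p$ is empty. So I would assume $d\ge 2$ and invoke the machinery just built. Under the characteristic hypothesis, Lemmas \ref{lem:well_defined_L&R}, \ref{lem:tilde(c)=c} and \ref{lem:Solve_system} supply square matrices $L,R$ of determinant one for which $LMR$ is block anti-diagonal, the block in position $(p,d+1-p)$ being the scalar $[(LM)_{p,d+1-p}]$ times $U_{d-p}\cdots U_1PD_1\cdots D_{d-p}$. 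The first thing to record is the dictionary already noted before Proposition \ref{prop:diagonal_coeff}: with $q=d-p$, the matrix $U_{d-p}\cdots U_1PD_1\cdots D_{d-p}$ represents $\cdot\overline{\ell}^{2q+t-(d-1)}\colon B_{i-q}\to B_{i+q+t-(d-1)}$, so as $p$ runs from $1$ to $d$ the index $q$ runs from $d-1$ down to $0$ and each of the $d$ maps in the statement appears exactly once along the anti-diagonal.

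Next I would verify that every anti-diagonal scalar is nonzero. For $p=1,\dots,d-1$, Proposition \ref{prop:diagonal_coeff} gives $[(LM)_{p,d+1-p}]=\det(C_{t,p-1,d})/\det(C_{t,p,d})$, while $[(LM)_{d,1}]=\binom{t}{d-1}=\det(C_{t,d-1,d})$; since $\det(C_{t,0,d})=\pm1$ and the characteristic divides none of $\det(C_{t,1,d}),\dots,\det(C_{t,d-1,d})$, all these scalars are nonzero. As $L$ and $R$ are invertible and $LMR$ has exactly one nonzero block in each block-row and block-column, multiplying a block by a nonzero scalar leaves its rank unchanged, so $\rank(M_i^t)=\rank(LMR)=\sum_{q=0}^{d-1}\rank\bigl(\overline{M}_{i-q}^{2q+t-(d-1)}\bigr)$.

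For the full-rank equivalence I would argue combinatorially. Write $a_q=\dim_k B_{i-q}$ and $b_q=\dim_k B_{i+q+t-(d-1)}$ for the source and target dimensions of the $q$-th map and $r_q$ for its rank; the basis decomposition gives $\dim_k A_i=\sum_q a_q$ and $\dim_k A_{i+t}=\sum_q b_q$. Because $r_q\le a_q$ and $r_q\le b_q$, the total $\sum_q r_q$ reaches $\min\bigl(\sum_q a_q,\sum_q b_q\bigr)$ exactly when $r_q=a_q$ for every $q$ (all maps injective) or $r_q=b_q$ for every $q$ (all maps surjective), that is, precisely when the maps have full rank for the same reason. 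Splitting into the cases $\sum_q a_q<\sum_q b_q$, $\sum_q a_q>\sum_q b_q$ and $\sum_q a_q=\sum_q b_q$ shows that one of ``all injective'' or ``all surjective'' is forced whenever $M_i^t$ has full rank, and conversely each of these forces full rank of $M_i^t$; this gives the claimed equivalence.

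For the determinant, assume all blocks are square, so $M_i^t$ is square and $|\det(M_i^t)|=|\det(LMR)|=\prod_{p=1}^{d}|\det N_p|$, where $N_p$ is the anti-diagonal block in row $p$ and the reordering of the anti-diagonal to a block diagonal contributes only a sign absorbed by the absolute value. With $q=d-p$, each $|\det N_p|=|[(LM)_{p,d+1-p}]|^{e_p}\,\bigl|\det\overline{M}_{i-q}^{2q+t-(d-1)}\bigr|$, where $e_p=\dim_k B_{i-d+p}$ is the block size, so the $|\det\overline{M}|$ factors assemble into $\prod_{q=0}^{d-1}\bigl|\det\overline{M}_{i-q}^{2q+t-(d-1)}\bigr|$. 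The main obstacle, and the only genuine computation, is the scalar part: substituting the ratios from Proposition \ref{prop:diagonal_coeff} together with $\binom{t}{d-1}=\det(C_{t,d-1,d})$ into $\prod_{p=1}^{d}|[(LM)_{p,d+1-p}]|^{e_p}$ makes it telescope, the factor $|\det(C_{t,p,d})|$ ending up with exponent $e_{p+1}-e_p=\dim_k B_{i+p-(d-1)}-\dim_k B_{i+p-d}$ for $p=1,\dots,d-1$, while the $\det(C_{t,0,d})$ factor drops out because it equals $\pm1$. Keeping the index shifts and the two boundary terms (at $p=0$ and $p=d$) straight is the delicate step; once the telescoping is confirmed, the stated formula follows.
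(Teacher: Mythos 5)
Your proposal is correct and takes essentially the same route as the paper's own proof: it uses Lemmas \ref{lem:well_defined_L&R}, \ref{lem:tilde(c)=c} and \ref{lem:Solve_system} together with Proposition \ref{prop:diagonal_coeff} to bring $M_i^t$ to block anti-diagonal form via $L$ and $R$, reads off the full-rank equivalence from the nonvanishing anti-diagonal scalars under the stated characteristic hypothesis, and derives the determinant formula by telescoping the ratios $\det(C_{t,p-1,d})/\det(C_{t,p,d})$ against the block sizes. The only differences are cosmetic: you make explicit the rank-counting argument, the scalar nonvanishing, and the telescoping bookkeeping that the paper leaves implicit, and you dispose of the trivial case $d=1$ separately.
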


\begin{proof}
From Lemma \ref{lem:well_defined_L&R} and Lemma \ref{lem:Solve_system}, we know that under the assumptions of the theorem, there are invertible matrices $L$ and $R$ with determinant one such that $LM_i^tR$ can be written as a block anti-diagonal matrix with $d$ blocks. By using Equation \eqref{eq:LM}, Equation \eqref{eq:LMR} and Proposition \ref{prop:diagonal_coeff}, these blocks are then given by
\[
\frac{\det (C_{t,p-1,d})}{\det(C_{t,p,d})}U_{d-p}\cdots U_1PD_1\cdots D_{d-p}
\]
for $p=1,2,\dots, d-1$ and $\binom{t}{d-1}P$ for the last block. Since each of the coefficients in front of $U_{d-p}\cdots U_1PD_1\cdots D_{d-p}$ is non-zero, we get that $LM_i^tR$, and therefore also $M_i^t$, has full rank if and only if all of the matrices $U_{d-p}\cdots U_1PD_1\cdots D_{d-p}$ for $p=1,2,\dots, d$ have full rank for the same reason. Moreover, as $U_{d-p}\cdots U_1PD_1\cdots D_{d-p}$ is a matrix representing the map
\[
\cdot \overline{\ell}^{d+1+t-2p}:B_{i-d+p}\to B_{t+i-p+1}
\]
in $B$, a reparametrization via $q=d-p$ gives that $M_i^t$ has full rank if and only if all the maps 
\[
\cdot \overline{\ell}^{2q + t -(d-1)}:B_{i-q} \to B_{i+q+t-(d-1)}
\]
have full rank for the same reason for $q$ from $0$ to $d-1$. In the case when all matrices $\overline{M}_{i-q}^{2q+t-(d-1)}$ are square, then the formula for $|\det(M_i^t)|$ follows as the product of all determinants along the anti-diagonal. Here we just note that the exponents come from the sizes of the blocks determined by each $\overline{M}_{i-q}^{2q+t-(d-1)}$ and that $|\det(C_{t,0,d})|=1$. 
\end{proof}

Before exploring applications of this result, we wish to get rid of the constraints $i,t\geq d-1$ and $B_{i+t}\neq 0$. Let us begin by relaxing the constraint on $t$. If $t<d-1$, then our decomposition of $\ell^t$ is changed and becomes
\[
\ell^t = \overline{\ell}^t + tx\overline{\ell}^{t-1} + \dots + \binom{t}{t}x^{t}\overline{\ell}^{0}.
\]
Therefore, some of the blocks at the bottom left corner of $M$ vanishes and the matrix instead takes the form
\[
\begin{pmatrix}
\overline{M}_i^t & 0 & \cdots & 0 & 0 & 0 & \cdots & 0 & 0 \\
t\overline{M}_{i}^{t-1} & \overline{M}_{i-1}^{t} & \cdots & 0 & 0 & 0 & \cdots & 0 & 0 \\
\vdots & \vdots & & & & & & \vdots & \vdots \\
\binom{t}{t-1}\overline{M}_{i}^{1} & \binom{t}{t-2}\overline{M}_{i-1}^{2} & \cdots & & & & & 0 & 0\\[0.3em]
I & \binom{t}{t-1}\overline{M}_{i-1}^{1} & \cdots & & & & & 0 & 0\\
0 & I & \cdots & & & & & 0 & 0\\
\vdots & \vdots & & \vdots & \vdots & \vdots & & \vdots & \vdots \\
0 & 0 & \cdots & I & \binom{t}{t-1}\overline{M}_{i-t}^{1} & \binom{t}{t-2}\overline{M}_{i-t-1}^{2} & \cdots & \overline{M}_{i-(d-2)}^{t} & 0 \\[0.3em]
0 & 0 & \cdots & 0 & I & \binom{t}{t-1}\overline{M}_{i-t-1}^{1} & \cdots & t\overline{M}_{i-(d-2)}^{t-1} & \overline{M}_{i-(d-1)}^{t}
\end{pmatrix}.
\]

That is, $M$ is given just as before but with the change that if we view $M$ as a $d\times d$ matrix with matrix entries, then the entries in position $(p,q)$ with $p-q=t$ is now given by $I$ and the entries where $p-q>t$ are zero. 

Hence, if we take the same matrices $L$ and $R$ as before, a short calculation shows that $(LMR)_{p,q}$ is still zero if $p-q>t$ and the identity matrix if $p-q=t$, while it gives that same expressions as in the case when $t\geq d-1$ if $p\leq t$ or $q\geq d-(t-1)$. Thus the first $t$ rows and last $t$ columns of $LMR$ are zero except for the non-zero blocks on the diagonal that we understand from Proposition \ref{prop:diagonal_coeff}. But as we can view the bottom left $(d-t)\times (d-t)$ blocks of $LMR$ as one large square block that is upper triangular with ones on the diagonal, that does not cause any additional constraints. 

We could now write down a generalization of Theorem \ref{thm:Rank-recursion_v1} that allows for any value of $t$, but let us do that later after we have taken care of the remaining constraints. To that end, remove all the constraints from Theorem \ref{thm:Rank-recursion_v1}, so it might be that $i<d-1$ or $B_{i+t}=0$. The main difference from the earlier cases is now that the number of parts in our decomposition of the basis $\mathcal{A}_j$ of $A_j$ may depend on $j$. This time we have that
\[
\mathcal{A}_j = x^{\max\{0,j-s\}}\mathcal{B}_{\min\{j,s\}} \sqcup x^{\max\{0,j-s\} + 1}\mathcal{B}_{\min\{j,s\}-1} \sqcup \cdots \sqcup x^{\min\{d-1,j\}}\mathcal{B}_{\max\{0,j-(d-1)\}}
\]
where $s$ denotes the socle degree of $B$.
Looking at the decompositions for $\mathcal{A}_i$ and $\mathcal{A}_{i+t}$, we get that the matrix $M$ representing $\cdot \ell^t:A_i \to A_{i+t}$ takes the form
\[
M=
\begin{pmatrix}
\binom{t}{v}\overline{M}_{\min\{i,s\}}^v & \binom{t}{v+1}\overline{M}_{\min\{i,s\}-1}^{v+1} & \cdots & \binom{t}{w}\overline{M}_{\max\{0,i-(d-1)\}}^w \\[0.5em]
\binom{t}{v-1}\overline{M}_{\min\{i,s\}}^{v-1} & \binom{t}{v}\overline{M}_{\min\{i,s\}-1}^{v} & \cdots & \binom{t}{w-1}\overline{M}_{\max\{0,i-(d-1)\}}^{w-1} \\
\vdots & \vdots & & \vdots \\
\binom{t}{u}\overline{M}_{\min\{i,s\}}^{u} & \binom{t}{u+1}\overline{M}_{\min\{i,s\}-1}^{u+1} & \cdots & \binom{t}{u+w-v}\overline{M}_{\max\{0,i-(d-1)\}}^{u+w-v}
\end{pmatrix}
\]
where
\begin{align*}
v &= \min\{i+t,s\}-\min\{i,s\},\\ 
u &= \max\{0,i+t-(d-1)\}-\min\{i,s\},\\ 
w &= \min\{i+t,s\}-\max\{0,i-(d-1)\}
\end{align*}
and $\binom{t}{j}=0$ if $j<0$ or $j>t$. While this might look like several cases that needs to be taken care of, some of them are easy to handle. First, if $\min\{i,s\}=s$, then $v=0$. This gives that the diagonal starting in the top left corner of $M$ consists of identity matrices with zeroes below it and thus that $M$ has full rank. Similarly, if $\max\{0,i-(d-1)\}=0$, then $u+w-v=0$ and we get that the diagonal starting at the bottom right corner of $M$ consists of identity matrices with zeroes below it, so $M$ has full rank in this case as well.

We may therefore assume that $i<s$ and $i+t>d-1$ from now on. With those assumptions, $M$ takes the slightly easier form
\[
M=
\begin{pmatrix}
\binom{t}{v}\overline{M}_{i}^v & \binom{t}{v+1}\overline{M}_{i-1}^{v+1} & \cdots & \binom{t}{w}\overline{M}_{\max\{0,i-(d-1)\}}^w \\[0.5em]
\binom{t}{v-1}\overline{M}_{i}^{v-1} & \binom{t}{v}\overline{M}_{i-1}^{v} & \cdots & \binom{t}{w-1}\overline{M}_{\max\{0,i-(d-1)\}}^{w-1} \\
\vdots & \vdots & & \vdots \\
\binom{t}{d-1}\overline{M}_{i}^{t-(d-1)} & \binom{t}{d-2}\overline{M}_{i-1}^{t-(d-1)+1} & \cdots & \binom{t}{w-v+t-(d-1)}\overline{M}_{\max\{0,i-(d-1)\}}^{w-v+t-(d-1)}
\end{pmatrix}
\]
where $v=\min\{i+t,s\}-i$ and $w=\min\{i+t,s\}-\max\{0,i-(d-1)\}$. If we denote by $M_{\text{big}}$ the matrix which we called $M$ before in Equation \eqref{eq:big_matrix}, then we note that our current matrix $M$ sits as the submatrix of $M_{\text{big}}$ consisting of the first $i-\max\{0,i-(d-1)\} + 1$ columns and last $d-i-t+\min\{i+t,s\}$ rows of $M_{\text{big}}$. Let $L_{\text{big}}$ and $R_{\text{big}}$ be the corresponding matrices to $M_{\text{big}}$ such that $L_{\text{big}}M_{\text{big}}R_{\text{big}}$ gets only non-zero blocks on the anti-diagonal as before. If $L$ denotes the square block of size $d-i-t+\min\{i+t,s\}$ at the bottom right of $L_{\text{big}}$ and $R$ the square block of size $i-\max\{0,i-(d-1)\} + 1$ at the top left of $R_{\text{big}}$, then we get that
\[
L_{\text{big}}M_{\text{big}}R_{\text{big}} = 
\begin{pmatrix}
* & * \\
0 & L
\end{pmatrix}
\begin{pmatrix}
* & * \\
M & *
\end{pmatrix}
\begin{pmatrix}
R & * \\
0 & *
\end{pmatrix} 
=
\begin{pmatrix}
* & * \\
LMR & *
\end{pmatrix}
\]
where $*$ denotes a block of appropriate size that is not relevant to us. Thus $LMR$ is the matrix of the same size as $M$ which lies at the bottom left corner of $L_{\text{big}}M_{\text{big}}R_{\text{big}}$. With this, we are now ready to give the final version of our main theorem. \newpage

\begin{theorem}
\label{thm:Rank-recursion}
Let $A=B\otimes_k k[x]/(x^d)$ for some $d\geq 1$ where $B$ is an artinian $k$-algebra with socle degree $s$. Assume the characteristic of $k$ is either zero or a prime which does not divide
\begin{equation}
\label{eq:prime_cond}
\prod_{p=\max\{1,d-i,i+t-s+1\}}^{\min\{d-1,t-1\}}\det(C_{t,p,d}) 
\end{equation}
where
\[
C_{t,p,d} = \begin{pmatrix}
\binom{t}{d-1} & \binom{t}{d-2} & \cdots & \binom{t}{p} \\[0.5em]
\binom{t}{d-2} & \binom{t}{d-3} & \cdots & \binom{t}{p-1} \\
\vdots & \vdots & & \vdots \\
\binom{t}{p} & \binom{t}{p-1} & \cdots & \binom{t}{2p-(d-1)}
\end{pmatrix}.
\]
If $\overline{\ell}$ is a general linear form in $B$ and $\ell=\overline{\ell} + x$ is a general linear form in $A$, then the map $\ell^t:A_i \to A_{i+t}$ has full rank if and only if the maps
\[
\cdot \overline{\ell}^{2q + t -(d-1)}:B_{i-q} \to B_{i+q+t-(d-1)}
\]
have full rank for the same reason for $q$ from $\max\{0,d-t\}$ to $d-1$.

Moreover, if the matrices $\overline{M}_{i-q}^{2q+t-(d-1)}$ representing these maps for $q$ going from $\max\{0,d-t\}$ to $\min\{d-1,d-1-i-t+s,i\}$ and the matrix $M$ representing $\cdot \ell^t$ are all square, then $$|\det(M)|=F_1\cdot F_2 \cdot F_3$$ where
\begin{align*}
F_1&=\prod_{q=\max\{0,d-t\}}^{\min\{d-1,d-1-i-t+s,i\}}|\det(\overline{M}_{i-q}^{2q+t-(d-1)})|, \\
F_2&= \prod_{p=\max\{1,1+i+t-s,d-i-1\}}^{\min\{d-1,t-1\}}|\det(C_{t,p,d})|^{\dim_k(B_{i+p-(d-1)})-\dim_k(B_{i+p-d})}\\
F_3&=\begin{cases}
|\det(C_{t,i+t-s,d})|^{\dim_k(B_{2i-(d-1)+t-s})} & \text{if } \eqref{eq:F_3_crit}\text{ holds,} \\
1 & \text{otherwise,}
\end{cases}
\end{align*}
where \eqref{eq:F_3_crit} is the inequality
\begin{equation}
\label{eq:F_3_crit}
\min\{d-1,t-1\}\geq i+t-s\geq\max\{0,(d-1)-i\}.
\end{equation}
\end{theorem}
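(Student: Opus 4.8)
The plan is to glue together the three reductions carried out immediately before the statement, the only genuinely new work being a careful account of the index ranges. First I would record the trivial cases: if $i\geq s$ or $i+t\leq d-1$, then, as already observed, the matrix $M$ representing $\cdot\ell^t$ is block triangular with identity blocks along a diagonal and so has full rank; moreover every map $\cdot\overline{\ell}^{2q+t-(d-1)}$ in the asserted range then has a zero source or a zero target and imposes no condition. Hence I may assume $i<s$ and $i+t>d-1$. In that range the discussion preceding the theorem shows that $M$ is the bottom-left corner submatrix of the big matrix $M_{\mathrm{big}}$ of Equation~\eqref{eq:big_matrix}, and that, with $L,R$ the corresponding corner blocks of the determinant-one matrices $L_{\mathrm{big}},R_{\mathrm{big}}$ produced by Lemmas~\ref{lem:well_defined_L&R} and~\ref{lem:Solve_system}, the product $LMR$ is the matching corner of the block anti-diagonal matrix $L_{\mathrm{big}}M_{\mathrm{big}}R_{\mathrm{big}}$.

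The heart of the argument is to decide which anti-diagonal blocks survive into this corner. Writing the corner as the last $N_r=d-i-t+\min\{i+t,s\}$ block rows and first $N_c=\min\{d-1,i\}+1$ block columns, a direct check gives that the block in position $(p,d+1-p)$ survives exactly when $p\geq p_{\min}:=\max\{1,\,i+t-s+1,\,d-i\}$. Setting $q=d-p$ and discarding the blocks $p>\min\{d,t\}$ that become identity blocks in the regime $t<d-1$, the surviving recursive blocks are precisely the maps $\cdot\overline{\ell}^{2q+t-(d-1)}\colon B_{i-q}\to B_{i+q+t-(d-1)}$ with $q$ in $[\max\{0,d-t\},\,\min\{d-1,d-1-i-t+s,i\}]$.

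For the equivalence I would argue in the spirit of Lemma~\ref{lem:Block_quadratic}: $M$ has full rank iff $LMR$ does, and $LMR$ is block anti-diagonal whose nonzero blocks are the surviving recursive maps, all remaining block rows and block columns being identically zero. A zero block column is a source part $x^{q-1}B_{i-q+1}$ whose matched target degree exceeds $s$, that is, a recursive map with zero target, and such a block forces non-injectivity; dually a zero block row is a recursive map with zero source and forces non-surjectivity. The key point, and the step I expect to be the main obstacle, is to check that these degenerate maps (those with a zero source or target) carry exactly the reason that makes full rank for the same reason over the whole range $q\in[\max\{0,d-t\},d-1]$ equivalent to full rank of $M$; once this is verified the equivalence follows, the identity blocks being bijective and hence compatible with either reason.

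For the determinant I assume $M$ and the listed matrices square, which forces every degenerate map to have both source and target zero and every surviving recursive matrix to be square. Then $|\det M|=|\det(LMR)|$ equals the product of the absolute values of the anti-diagonal block determinants; these are the recursive determinants, giving $F_1$, each scaled by the coefficient $\det(C_{t,p-1,d})/\det(C_{t,p,d})$ of Proposition~\ref{prop:diagonal_coeff} raised to the block size $\dim_k B_{i+p-d}$. This scalar product telescopes, but since it now runs over the truncated range $[p_{\min},\min\{d,t\}]$ rather than over all of $[1,d]$ it no longer collapses at both ends. The upper boundary term is trivial because $\det(C_{t,\min\{d,t\},d})$ has absolute value one (an empty determinant when $t\geq d$, a unitriangular one when $t<d$). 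The lower boundary term is absorbed into the bulk product $F_2$ when $p_{\min}=d-i$, since then the relevant degree $2i-(d-1)+t-s$ is negative and $\dim_k B_{2i-(d-1)+t-s}=0$; otherwise, exactly when $p_{\min}=i+t-s+1$, which is what the inequality~\eqref{eq:F_3_crit} records, it survives as the factor $F_3=|\det(C_{t,i+t-s,d})|^{\dim_k B_{2i-(d-1)+t-s}}$. Matching these boundary terms and the exponents of $F_2$ is the most delicate bookkeeping, but it is routine telescoping once the surviving range is fixed; finally the characteristic hypothesis~\eqref{eq:prime_cond} is precisely what keeps the relevant $\det(C_{t,p,d})$ invertible so that $L$ and $R$ exist.
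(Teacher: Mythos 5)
Your proposal is correct and follows essentially the same route as the paper's own proof: reduce to the bottom-left corner submatrix of $M_{\mathrm{big}}$ with the corner blocks of $L_{\mathrm{big}},R_{\mathrm{big}}$, compute the surviving anti-diagonal range $p\geq\max\{1,i+t-s+1,d-i\}$, let the degenerate maps (zero source or target) encode the injectivity or surjectivity forced by zero block rows or columns, and obtain the determinant by telescoping the coefficients of Proposition~\ref{prop:diagonal_coeff} with the same boundary-term analysis and the same justification of the characteristic hypothesis. One small bookkeeping nuance: the absorption of the lower boundary factor into $F_2$ when $p_{\min}=d-i$ rests on $\dim_k B_{-1}=0$ (making the $F_2$-exponent at $p=d-i-1$ equal to $\dim_k B_0$), whereas the fact you cite, $\dim_k B_{2i-(d-1)+t-s}=0$, is what makes $F_3$ trivial in that case; both facts hold, so your argument goes through as in the paper.
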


\begin{proof}
Assume first that we are working in characteristic zero. By the above discussion, we then have that $\cdot \ell^t:A_i \to A_{i+t}$ has full rank if and only if the maps $\cdot \overline{\ell}^{2q + t -(d-1)}:B_{i-q} \to B_{i+q+t-(d-1)}$ satisfy some full rank conditions for a subset of $q$ in the range from $0$ to $d-1$. This is the case since all $q$ are part of the recursion when $i,t\geq d-1$ and $s\leq i+t$ by Theorem \ref{thm:Rank-recursion_v1}, while our discussion in the case when $t<d-1$ gives that some of the blocks of $LMR$ will no longer be present, namely those corresponding to $q<d-t$. This gives the part saying that $q$ goes from $\max\{0,d-t\}$.

Next, in the case without any assumptions, we saw that we are only getting a contribution from the blocks lying in the first $i-\max\{0,i-(d-1)\} + 1$ columns and last $d-i-t+\min\{i+t,s\}$ rows of $L_{\text{big}}M_{\text{big}}R_{\text{big}}$. Recalling that each $q$ corresponds to the non-zero block in column $q+1$ of $L_{\text{big}}M_{\text{big}}R_{\text{big}}$ gives that $q$ can also not be larger than $i-\max\{0,i-(d-1)\}$ or $d-i-t+\min\{i+t,s\}-1$. Since $q$ is always at most $d-1$, writing out the four cases here gives that $q$ will be at most $\min\{d-1,d-1-i-t+s,i\}$. Further, let $\mathcal{L}$ denote the collection of maps given by
\[
\cdot \overline{\ell}^{2q + t -(d-1)}:B_{i-q} \to B_{i+q+t-(d-1)}
\]
where $q$ goes from $\max\{0,d-t\}$ to $\min\{d-1,d-1-i-t+s,i\}$. For $\cdot \ell^t:A_i \to A_{i+t}$ to have full rank, it is sometimes required that the maps in $\mathcal{L}$ have full rank for specific reasons. If $i<\min\{d-1,d-1-i-t+s\}$, then $LMR$ has a top row consisting of blocks of zeroes. Thus it can only represent an injective map, not a surjective map. Therefore, all maps in $\mathcal{L}$ must be injective for $LMR$ (and thus also $M$) to have full rank. Then $q=i+1$ gives a map from $B_{-1}=0$ to $B_{2i+t-(d-1)}\neq 0$ that is trivially injective but not surjective. Hence we may include $q>i$ in our recursion and let the requirement that all maps should have full rank force the injectivity. In the same way, if $d-1-i-t+s<\min\{d-1,i\}$, then $LMR$ has the last column filled with blocks of zeroes, so it can only represent a surjective map. Thus all maps in $\mathcal{L}$ must be surjective for $LMR$ to have full rank. Using $q=d-i-t+s$, it gives a map from a non-zero space to a space of dimension zero, so adding $q>d-1-i-t+s$ to the recursion will force surjectivity. Finally, in any other case there is no automatic row or column of zeroes since the anti-diagonal blocks of $LMR$ representing maps in $\mathcal{L}$ do reach the top right corner of $LMR$. Hence the maps in $\mathcal{L}$ can be either all injective or all surjective for $LMR$ to have full rank.

Note also that the interval from $\max\{0,d-t\}$ to $\min\{d-1,d-1-i-t+s,i\}$ is empty if either $i\geq s$ or $d-1\geq i+t$. Since the addition of maps coming from $q>\min\{d-1,d-1-i-t+s,i\}\neq d-1$ only adds map that have full rank for trivial reasons, the recursion says that $\cdot \ell^t:A_i \to A_{i+t}$ always have full rank if either $i\geq s$ or $d-1\geq i+t$, which agrees with what was determined earlier. 

In positive characteristic, the thing we need to make sure of is that all matrices $C_{t,p,d}$ used when constructing the matrices $L$ and $R$ are invertible. From the proofs of Lemma \ref{lem:tilde(c)=c} and Lemma \ref{lem:Solve_system}, we see that each $p$ corresponds to making sure that all entries of $M_{\text{big}}$ in row $p$ are zero to the left of the anti-diagonal and all entries in column $d+1-p$ are zero below the anti-diagonal. When $t<d-1$, we got that only the first $t$ rows and last $t$ columns needed input from $L$ and $R$, hence we only need $C_{t,p,d}$ with $p\leq t$ to be invertible. But since $|\det(C_{t,t,d})|=1$, we may even say $p<t$. Similarly, in the general case we had that only the first $i-\max\{0,i-(d-1)\} +1$ columns and last $d-i-t+\min\{i+t,s\}$ rows of $M_{\text{big}}$ are part of the matrix representing $\cdot \ell^t$. Hence only $p$ with $p>i+t-\min\{i+t,s\}$ and $p\geq d-i+\max\{0,i-(d-1)\}$ must be considered, or in other words, $p\geq \max\{1,d-i,i+t-s+1\}$, giving the claimed bound.

In the case when all the matrices are square, the formula for $|\det(M)|$ follows by taking the product of the determinants of all matrices appearing on the anti-diagonal of $LMR$. Since those matrices are exactly 
\[
\frac{\det (C_{t,d-1-q,d})}{\det(C_{t,d-q,d})}\overline{M}_{i-q}^{2q+t-(d-1)}
\]
for $q$ from $\max\{0,d-t\}$ to $\min\{d-1,d-1-i-t+s,i\}$, where we set $\det(C_{t,d,d})=1$, the formula for $F_1$ follows. For $F_2$, we do the reparametrization $p=d-q$ to get the factors
\begin{align*}
&|\det(C_{t,\max\{0,i+t-s,(d-1)-i\},d})|^{\dim_k (B_{\max\{i-(d-1),2i-(d-1)+t-s,0\}})}, \\
&|\det(C_{t,\min\{d,t\},d})|^{-\dim_k(B_{\min\{i,i+t-d\}})}
\end{align*}
and
\[
\prod_{p=\max\{1,1+i+t-s,d-i\}}^{\min\{d-1,t-1\}}|\det(C_{t,p,d})|^{\dim_k(B_{i+p-(d-1)})-\dim_k(B_{i+p-d})},
\]
where the first two factors are only included if the last product is not empty. Here the second factor does not contribute since $|\det(C_{t,t,d})|=|\det(C_{t,d,d})|=1$. For the first factor we see that it will not contribute anything when $\max\{0,i+t-s,(d-1)-i\}=0$ since $|\det(C_{t,0,d})|=1$. However, if $\min\{d-1,t-1\}\geq \max\{0,i+t-s,(d-1)-i\}>0$, it will contribute a factor. When $\max\{0,i+t-s,(d-1)-i\} = (d-1)-i$, we can use that since $\dim_k B_{i+p-d} = \dim_k B_{-1} = 0$ for $p=(d-1)-i$, we may write it in such a way as to include it in $F_2$ without changing the value, giving the total 
\[
F_2=\prod_{p=\max\{1,1+i+t-s,d-i-1\}}^{\min\{d-1,t-1\}}|\det(C_{t,p,d})|^{\dim_k(B_{i+p-(d-1)})-\dim_k(B_{i+p-d})}
\]
and 
if $\min\{d-1,t-1\}\geq i+t-s\geq\max\{0,(d-1)-i\}$, also the factor 
\[
F_3=|\det(C_{t,i+t-s,d})|^{\dim_k(B_{2i-(d-1)+t-s})}
\]
as claimed. 
\end{proof}

\begin{remark}
From the proof of Theorem \ref{thm:Rank-recursion}, we see that if $\overline{\ell}\in B_1$ is a general linear form and $\mathcal{L}$ denotes the collection of maps given by
\[
\cdot \overline{\ell}^{2q + t -(d-1)}:B_{i-q} \to B_{i+q+t-(d-1)}
\]
where $q$ goes from $\max\{0,d-t\}$ to $\min\{d-1,d-1-i-t+s,i\}$, then the multiplication $\cdot \ell^t:A_i \to A_{i+t}$ for a general linear form $\ell = \overline{\ell} + x\in A_1$ has full rank if and only if all the maps in $\mathcal{L}$ are
\begin{align*}
&\bullet \text{injective} &\text{ if } i<\min\{d-1,d-1-i-t+s\}, \\
&\bullet \text{surjective} &\text{ if } d-1-i-t+s<\min\{d-1,i\}, \\
&\bullet \text{of full rank for the same reason} &\text{otherwise}.
\end{align*}
\end{remark}

While the several occurrences of maximums and minimums in Theorem \ref{thm:Rank-recursion} can make the statement feel a bit technical, they mostly have intuitive explanations. For example, the maximums and minimums in the product for $F_1$ are exactly corresponding to maps $\cdot \overline{\ell}^{2q + t -(d-1)}:B_{i-q} \to B_{i+q+t-(d-1)}$ from Theorem \ref{thm:Rank-recursion_v1} that do not have full rank for trivial reasons. That is, $q\geq d-t$ ensures that the exponent $2q+t-(d-1)$ is positive, $q\leq i$ that we are mapping from a non-zero space, and $q\leq d-1-i-t+s$ that we are mapping to a non-zero space.  

Notice that if we set $d=2$ in Theorem \ref{thm:Rank-recursion}, unwinding the definitions then gives Theorem \ref{thm:recursion_quadratic} and Proposition \ref{prop:quadratic_edge_case}, so they are indeed special cases of Theorem \ref{thm:Rank-recursion}. 

\section{On monomial complete intersections}\label{sec:mon_CI}

With Theorem \ref{thm:Rank-recursion} at hand, we can now also generalize Proposition \ref{prop:Quadratic_det} to general monomial complete intersections.

\begin{theorem}\label{thm:CI_determinant}
Let $A=k[x_1,\dots, x_n]/ (x_1^{d_1},\dots, x_n^{d_n})$ be a monomial complete intersection over a field of characteristic zero and let $\ell = x_1 + \cdots + x_n$ be a linear form. Next, let $e=\sum_{j=1}^n (d_j-1)$ denote the socle degree of $A$, $\mathbf{d}=(d_1,\dots, d_n)$ and $\mathbf{d}'=(d_1,\dots, d_{n-1})$. Denote by $a_{n,i,\mathbf{d}}$ the absolute value of the determinant of 
\[
\cdot \ell^t:A_i \to A_{e-i}
\]
for $t=e-2i$. Then, for any $i\leq e/2$, we have that 
\[
a_{n,i,\mathbf{d}}=\left(\prod_{j=\max\{0,d_n-t\}}^{\min\{i,d_n-1\}}a_{n-1,i-j,\mathbf{d}'}\right)  \prod_{s=\max\{1,d_n-i-1\}}^{\min\{t-1,d_n-1\}}|\det(C_{t,s,d_n})|^{h_{i+s-(d_n-1)}-h_{i+s-d_n}}
\]
where $h_j=\dim_k(k[x_1,\dots, x_{n-1}]/(x_1^{d_1},\dots, x_{n-1}^{d_{n-1}}))_j$. In particular, $a_{n,i,\mathbf{d}}\neq 0$ and $A$ has the strong Lefschetz property.
\end{theorem}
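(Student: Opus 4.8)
The plan is to proceed by induction on the number of variables $n$, with Theorem~\ref{thm:Rank-recursion} providing the inductive step. The base case $n=1$ is immediate: $A=k[x_1]/(x_1^{d_1})$ has each multiplication map $\cdot \ell^t$ between complementary degrees given by a nonzero scalar (since $\ell = x_1$ and the characteristic is zero), so $a_{1,i,\mathbf{d}}\neq 0$. For the inductive step, I would write $A = B\otimes_k k[x_n]/(x_n^{d_n})$ where $B = k[x_1,\dots,x_{n-1}]/(x_1^{d_1},\dots,x_{n-1}^{d_{n-1}})$, set $\overline{\ell} = x_1 + \cdots + x_{n-1}$, and observe that $\ell = \overline{\ell} + x_n$ is a general linear form. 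The socle degree of $B$ is $s = e - (d_n - 1)$, and its Hilbert function values are the $h_j$ appearing in the statement.

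The key point is that $A$ has a symmetric Hilbert function (being a complete intersection), so the map $\cdot \ell^t : A_i \to A_{e-i}$ with $t = e - 2i$ is a map between vector spaces of equal dimension; hence its matrix $M$ is square and ''full rank'' means $\det M \neq 0$. I would then apply the determinant formula of Theorem~\ref{thm:Rank-recursion}, writing $|\det(M)| = F_1 \cdot F_2 \cdot F_3$. The factor $F_1$ is a product of $|\det(\overline{M}_{i-q}^{2q+t-(d_n-1)})|$ over the appropriate range of $q$. Here I must check that each such map on $B$ is again a multiplication between complementary degrees of the symmetric algebra $B$: with $s = e-(d_n-1)$, the target degree $i+q+t-(d_n-1)$ and source degree $i-q$ indeed satisfy $(i-q)+(i+q+t-(d_n-1)) = 2i+t-(d_n-1) = e-(d_n-1) = s$, so these are precisely the determinants $a_{n-1,i-q,\mathbf{d}'}$ from the inductive hypothesis after reindexing $q \mapsto j$. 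The factors $F_2$ and $F_3$ together collect the powers of $|\det(C_{t,p,d_n})|$; I would verify that combining the ranges and exponents in Theorem~\ref{thm:Rank-recursion} reproduces the single product $\prod_{s}|\det(C_{t,s,d_n})|^{h_{i+s-(d_n-1)}-h_{i+s-d_n}}$ in the claimed formula.

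The main obstacle I anticipate is the careful bookkeeping needed to match the several $\max$/$\min$ bounds in Theorem~\ref{thm:Rank-recursion} against the cleaner bounds in the target formula, using the specific value $t = e-2i$ and the relation $s = e-(d_n-1)$ between the socle degrees. In particular, one must check that the hypotheses of Theorem~\ref{thm:Rank-recursion} apply (the characteristic-zero assumption makes the prime condition vacuous), that the range of $q$ for $F_1$ collapses to $\max\{0,d_n-t\}\le j \le \min\{i,d_n-1\}$ after substituting $t=e-2i$, and that the $F_3$ correction term is absorbed into the $F_2$ product so that the exponents align with $h_{i+s-(d_n-1)}-h_{i+s-d_n}$. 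Once the formula is verified, the final conclusion is essentially free: by the inductive hypothesis each $a_{n-1,i-j,\mathbf{d}'}\neq 0$, and each $|\det(C_{t,s,d_n})|\neq 0$ by Krattenthaler's closed formula \eqref{eq:closed_determinant} cited in Lemma~\ref{lem:Solve_system}, so $a_{n,i,\mathbf{d}}\neq 0$ for all $i\le e/2$. Since $A$ has a symmetric Hilbert function, nonvanishing of all these determinants between complementary degrees is exactly the condition for $A$ to have the strong Lefschetz property, completing the proof.
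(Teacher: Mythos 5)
Your proposal is correct and follows essentially the same route as the paper's own proof: induction on $n$ with base case $n=1$, the decomposition $A=B\otimes_k k[x_n]/(x_n^{d_n})$, the complementary-degree computation $(i-q)+(i+q+t-(d_n-1))=e-(d_n-1)$ showing the blocks are square and equal to $a_{n-1,i-j,\mathbf{d}'}$, the absorption of $F_3$ into $F_2$ (which the paper carries out via $i+t-e'=d_n-1-i$ and $\dim_k B_{-1}=0$), and the final nonvanishing via Krattenthaler's formula. The bookkeeping you defer does work out exactly as you anticipate, in particular $d_n-1-i-t+e'=i$ collapses the $F_1$ range to $\min\{i,d_n-1\}$, matching the paper's argument step for step.
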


\begin{proof}
The proof here follows the same logic as that of Proposition~\ref{prop:Quadratic_det}. First we have that $a_{1,i,d_1}=1$ for all $i=0,1,\dots, d_1/2$. So we may assume that $n>1$ and $a_{n-1,i,\mathbf{d}'}\neq 0$ for any $\mathbf{d'}$ and $i\leq e'/2$ where $e'=\sum_{j=1}^{n-1} (d_j-1)$ is the socle degree of $B=k[x_1,\dots, x_{n-1}]/(x_1^{d_1},\dots, x_{n-1}^{d_{n-1}})$. Next, write $A=B\otimes_k k[x_n]/(x_n^{d_n})$. Then we know that $B$ has a symmetric Hilbert series with socle degree $e'$. Thus, for $t=e-2i$, we have that $h_{i-q}=h_{i+q+t-(d_n-1)}$ since $$(i-q)+(i+q+t-(d_n-1)) = 2i + t - (d_n-1) = e'.$$
Hence all the matrices representing the maps 
\[
\cdot \overline{\ell}^{2q + t -(d_n-1)}:B_{i-q} \to B_{i+q+t-(d_n-1)}
\]
for $q$ from $\max\{0,d_n-t\}$ to $\min\{i,d_n-1, d_n-1-i-t+e'\}$ are square. Therefore Theorem \ref{thm:Rank-recursion} can be applied. Here several simplifications can be made to the factors $F_1$, $F_2$ and $F_3$. First, $d_n-1-i-t+e'=e-i-t=i$, so the product in $F_1$ can be simplified to have the upper bound $\min\{d-1,i\}$ for $q$. Next, since $2i-(d_n-1)+t-e'=0$, we can include $F_3$ as a factor in the product for $F_2$ since then $\dim_k(B_{i+(i+t-e')-d_n})=0$ and instead let the product in $F_2$ start at $\max\{1,i+t-e',d_n-i-1 \}$. Finally, $i+t-e'=d_n-i-1$ by the definition of $t$ and $e'$, so we have $\max\{1,i+t-e',d_n-i-1 \}=\max\{1,d_n-1-i\}$. The product of the new $F_1$ and $F_2$ then gives the desired determinant formula. Finally, since all determinants $\det(C_{t,s,d_n})$ are non-zero by Equation \eqref{eq:closed_determinant} and all $a_{n-1,i-j,\mathrm{d}'}$ are non-zero by assumption, it follows that $a_{n,i,\mathbf{d}}$ is also non-zero, giving that $A$ has the strong Lefschetz property.
\end{proof}

Note that $a_{n-1,i-j,\mathbf{d}'}$ is the absolute value of the matrix representing multiplication by $\overline{\ell}^{2j+t-(d_n-1)}$ from degree $i-j$. Thus the power of the mapping corresponding to $a_{n-1,i-j,\mathbf{d}'}$ do also change, even though it is not directly displayed in the notation.

\begin{example}
Let us use Theorem \ref{thm:CI_determinant} to calculate the absolute value of the determinant of 
\[
\cdot (x+y+z)^2:A_2\to A_4
\]
when $A=k[x,y,z]/(x^2, y^3, z^4)$. If we view $z$ as the last variable, then in the notation of Theorem \ref{thm:CI_determinant} we want to calculate $a_{3,2,(2,3,4)}$ and have that $i=t=2$, $d_3=4$, and $h_s=\dim_k(k[x,y]/(x^2,y^3))_s$. The recursion then gives that
\begin{align*}
a_{3,2,(2,3,4)} &= \left(\prod_{j=2}^{2}a_{2,2-j,(2,3)}\right)\left(\prod_{s=1}^1|\det(C_{2,s,4})|^{h_{s-1}-h_{s-2}} \right)\\
&=a_{2,0,(2,3)} \cdot |-4|^1\\ 
&= 4a_{2,0,(2,3)}
\end{align*}
where we used that $h_{-1}=0$. The determinants $|\det(C_{t,p,d})|$ can here be calculated using Krattenthaler's formula from Equation \eqref{eq:closed_determinant} or directly from their definitions since they are small enough in this case. To calculate $a_{2,0,(2,3)}$, we can use the recursion again with $i=0$ and $t=d_2=3$  to get that
\[
a_{2,0,(2,3)} = a_{1,0,(2)}|\det(C_{3,2,3})|^{1} = 3
\]
where we used that $a_{1,0,(2)}=1$. Hence we conclude that our sought determinant equals $$a_{3,2,(2,3,4)}=4\cdot 3 = 12.$$ 
This is not the only way we could calculate $a_{3,2,(2,3,4)}$. If we change the order of the variables, we get another recursion. Indeed, let us view $x$ as the last variable and calculate $a_{3,2,(4,3,2)}$. As a first step, we have $i=t=d_3=2$, and the recursion gives 
\[
a_{3,2,(4,3,2)} = a_{2,2,(4,3)} a_{2,1,(4,3)} |\det(C_{2,1,2})|^{3-2} = 2 a_{2,2,(4,3)} a_{2,1,(4,3)}.
\]
Two more applications of the recursion then gives that
\[
a_{2,2,(4,3)} = a_{1,0,(4)}|\det(C_{1,0,3})|^1 = 1
\]
and 
\[
a_{2,1,(4,3)} = a_{1,0,(4)} a_{1,1,(4)} |\det(C_{3,2,3})|^{1-1} |\det(C_{3,1,3})|^1 = 1 \cdot |-6| = 6.
\]
Hence 
\[
a_{3,2,(4,3,2)}= 2 \cdot 6 = 12 = a_{3,2,(2,3,4)}
\]
as expected.
\end{example}

To our knowledge, Theorem \ref{thm:CI_determinant} gives the first known formula, recursive or not, for the determinants appearing in the study of the SLP of arbitrary monomial complete intersections. Previous results of this flavor includes Hara and Watanabe's recursion in the quadratic case, and when the socle degree of the monomial complete intersection is odd, the WLP of such an algebra is determined by a single determinant and given by Proctor in \cite[Corollary 1]{Proctor} using the language of weak compositions. See also \cite[Theorem 3.1]{Cook_pos_char} for a modern account of that result. 

One drawback of the determinant formula given in Theorem \ref{thm:CI_determinant} is that it gives the result in terms of other determinants of the form $\det(C_{t,p,d})$. If one could understand the prime factors of such determinants, it then seems possible that one would be able to say when monomial complete intersections over fields of positive characteristic have the weak or the strong Lefschetz property, or perhaps even give a version of Theorem \ref{thm:quadratic_criterion} for arbitrary monomial complete intersections. Note that the strong Lefschetz property has already been characterized by Lundqvist and Nicklasson \cite{SLP_pos_char} and Nicklasson \cite{Sum_of_var_SLP}, and the weak Lefschetz property for equigenerated monomial complete intersections by Kustin and Vraciu \cite{Kustin_Vraciu}, but the WLP is in general still open. 

For example, let us apply Theorem \ref{thm:CI_determinant} to $A=k[x,y,z]/(x^{d_1}, y^{d_2}, z^{d_3})$ where we assume that $d_1\leq d_2 \leq d_3$ and $d_1+d_2+d_3=2r$ is even. In this case, $A$ has the WLP if and only if multiplication by a linear form from degree $i=r-2$ has full rank. If $d_3>d_1+d_2-2$, both products for $a_{3,i,(d_1,d_2,d_3)}$ in Theorem \ref{thm:CI_determinant} are empty and $A$ has the WLP. Else, $a_{3,i,(d_1,d_2,d_3)}= a_{2,r-d_3-1,(d_1,d_2)}$. This time, applying Theorem \ref{thm:CI_determinant} gives a first product that is one since we always have $a_{1,p,d}=1$, while the second product is only contributing with $\det\left(C_{d_3, \frac{d_3+d_2-d_1}{2}, d_2}\right)$ because all other factors are of the form $|\det(C_{d_3,s,d_2})|^0$. Hence $A$ has the WLP if and only if $d_3>d_1+d_2-2$ or the characteristic $p$ of $k$ does not divide $\det\left(C_{d_3, \frac{d_3+d_2-d_1}{2}, d_2}\right)$. This recovers half of the main result of Li and Zanello \cite[Theorem 3.2. (1)]{Li_Zanello}. 

However, finding the prime factors of these determinants turns out to be a highly nontrivial task. For instance, if $d_1=d_2=d_3=2m$ in the previous example, then $A=k[x,y,z]/(x^{2m}, y^{2m}, z^{2m})$ has the WLP if and only if $p$ does not divide $\det(C_{2m,m,2m})$ by the above. But by Brenner and Kaid \cite[Theorem 2.6]{Brenner_Kaid_pos_char}, $A$ fails the WLP if and only if there are integers $s$ and $t$ such that
\[
\frac{6m}{6t+2}> p^s > \frac{6m}{6t+4}.
\]
When $p=2$, there is also another characterization of WLP for the general case using a Nim-sum condition \cite[Theorem 8.1]{Nim_sum}. It should be mentioned that the determinants $\det(C_{t,p,d})$ have appeared in the study of the Lefschetz properties before, such as in Altafi and Nemati \cite{Altafi_Nemati}, Kustin and Vraciu \cite{Kustin_Vraciu}, and in Cook II and Nagel \cite{Cook_Nagel_Lozenges_Illinois}. They are especially prevalent in the works of Cook II and Nagel where they use these determinants to give several connections between the WLP and Lozenge tilings. The first combinatorial explanation for some of this was given by a bijective proof of Chen, Guo, Jin and Liu in \cite{Bijective_MacMahon_det}. In particular, if $d_1=a+b$, $d_2=a+c$ and $d_3=b+c$ for some $a,b,c\in \mathbb{N}$, then the determinant $\det(C_{b+c, c, a+c})$ deciding if $A=k[x,y,z]/(x^{a+b}, y^{a+c}, z^{b+c})$ has the WLP or not is exactly counting the number of plane partitions of an $a\times b \times c$ box. Hence, one can use a classical formula of MacMahon \cite{MacMahon} to get that
\[
|\det(C_{t,p,d})| = \frac{\mathcal{H}(t-p)\mathcal{H}(p)\mathcal{H}(d-p)\mathcal{H}(t+d-p)}{\mathcal{H}(t)\mathcal{H}(d)\mathcal{H}(t+d-2p)}
\]
where
\[
\mathcal{H}(n) = \prod_{i=0}^{n-1}i!
\]
is the hyperfactorial of $n$. Comparing this with Krattenthaler's formula from Equation \eqref{eq:closed_determinant} and other expressions such as the one by Roberts \cite[page 335]{Roberts_local_cohomology},
\[
|\det(C_{t,p,d})| = \frac{\binom{t}{p}\binom{t+1}{p}\cdots \binom{d+t-p-1}{p}}{\binom{p}{p}\binom{p+1}{p}\cdots \binom{d-1}{p}},
\] 
we do have a number of different ways to express $|\det(C_{t,p,d})|$, just not anyone that gives an easy way for finding its prime factors. 

\section{Applications on Lefschetz properties}\label{sec:Applications}

The goal for this section is to use Theorem \ref{thm:Rank-recursion} to classify in which cases algebras or families of algebras of the form $A=B\otimes_k k[x]/(x^d)$ do have the weak or the strong Lefschetz property. In particular, Theorem \ref{thm:Extend_SLP} gives a proof of a slight variation of a theorem of Lindsey \cite[Theorem 3.10]{Lindsey}.
But to begin with, a first corollary of Theorem \ref{thm:Rank-recursion} is how the WLP of an extension behaves. 

\begin{corollary}\label{cor:extend_WLP}
Let $A=B\otimes_k k[x]/(x^d)$. Then $A$ has the WLP if and only if there is a linear form $\ell\in B_1$ such that
\[
\cdot \ell^d:B_i \to B_{i+d}
\]
has maximal rank for any $i\geq 0$. 
\end{corollary}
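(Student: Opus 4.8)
The plan is to specialize Theorem~\ref{thm:Rank-recursion} to the case $t=1$ and then sweep $i$ over all nonnegative integers. The first step is to convert the existence-based definition of the WLP into a statement about a \emph{general} linear form. Since having full rank is a Zariski-open condition on the coefficients of a linear form, and since for a fixed algebra only finitely many multiplication maps $\cdot\ell\colon A_i\to A_{i+1}$ are nontrivial, over an infinite field $A$ has the WLP if and only if a general linear form $\ell\in A_1$ makes all these maps have full rank. By \cite[Theorem 2.2]{Sum_of_var_SLP} such a general $\ell$ may be taken of the form $\ell=\overline{\ell}+x$ with $\overline{\ell}$ a general linear form of $B$, which is exactly the setting of Theorem~\ref{thm:Rank-recursion}. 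The same openness argument applied on $B$ shows that the condition in the corollary, phrased as the existence of a form $\ell\in B_1$ with $\cdot\ell^d\colon B_i\to B_{i+d}$ of full rank for all $i$, is equivalent to the analogous property for a general $\overline{\ell}\in B_1$.

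Next I would compute what Theorem~\ref{thm:Rank-recursion} says at $t=1$. The prime-avoidance product in \eqref{eq:prime_cond} runs up to $\min\{d-1,t-1\}=0$, hence is empty, so no restriction on the characteristic is incurred and the equivalence holds in every characteristic. The index range also collapses: $\max\{0,d-t\}=\max\{0,d-1\}=d-1$, so the only value contributing is $q=d-1$. Substituting $q=d-1$ and $t=1$ into $\cdot\overline{\ell}^{\,2q+t-(d-1)}\colon B_{i-q}\to B_{i+q+t-(d-1)}$ yields the single map $\cdot\overline{\ell}^{\,d}\colon B_{i-(d-1)}\to B_{i+1}$. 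Because the family consists of just one map, the phrase ``full rank for the same reason'' is automatic, so Theorem~\ref{thm:Rank-recursion} reduces to the statement that $\cdot\ell\colon A_i\to A_{i+1}$ has full rank if and only if $\cdot\overline{\ell}^{\,d}\colon B_{i-(d-1)}\to B_{i+1}$ does.

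Finally I would reindex. Writing $j=i-(d-1)$, the target index becomes $i+1=j+d$, so the map is $\cdot\overline{\ell}^{\,d}\colon B_{j}\to B_{j+d}$. As $i$ ranges over all $i\geq 0$, the shifted index $j$ ranges over all $j\geq -(d-1)$; for $j<0$ the source $B_j$ vanishes and the map is trivially of full rank, imposing no condition, while for $j\geq 0$ we recover precisely the maps $\cdot\overline{\ell}^{\,d}\colon B_j\to B_{j+d}$ for every $j\geq 0$. Hence $A$ has the WLP if and only if a general $\overline{\ell}$ gives full rank on all of these, which by the reduction of the first paragraph is equivalent to the existence of a form in $B_1$ with the stated property. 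The only delicate point is the translation between the existential definition of the WLP and the ``general linear form'' hypothesis of Theorem~\ref{thm:Rank-recursion}; this is handled by the openness of the maximal-rank locus over an infinite field, and it must be invoked on both $A$ and $B$. Everything else is bookkeeping of the indices once $t=1$ is substituted.
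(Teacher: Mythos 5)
Your proposal is correct and takes essentially the same route as the paper's own proof: specialize Theorem~\ref{thm:Rank-recursion} to $t=1$, note that the product in \eqref{eq:prime_cond} is empty so the statement holds in every characteristic, observe that the range of $q$ collapses to the single value $q=d-1$, and reindex $\cdot\overline{\ell}^{\,d}\colon B_{i-(d-1)}\to B_{i+1}$ as $\cdot\overline{\ell}^{\,d}\colon B_j\to B_{j+d}$ for $j\geq 0$. Your extra paragraph translating the existential definition of the WLP into a statement about a general linear form (via openness of the maximal-rank locus) is a point the paper leaves implicit, but it is not a different approach.
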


\begin{proof}
First, note that when $t=1$ in Theorem \ref{thm:Rank-recursion}, the product in Equation \eqref{eq:prime_cond} is empty, so the theorem can be used for fields of any characteristic. We then get that $\ell + x$ has full rank on $A$ if and only if
\[
\cdot \ell^{2q + 1 -(d-1)}:B_{i-q} \to B_{i+q+1-(d-1)}
\]
has full rank for any $i$ and $q=d-1$. That is, $A$ has the WLP if and only if
\[
\cdot \ell^d: B_{i-(d-1)} \to B_{i+1}
\]
has full rank for all $i$ as desired.
\end{proof}

In the case when $B$ is a monomial complete intersection, Corollary \ref{cor:extend_WLP} appears for example as \cite[Lemma 4.7]{SLP_pos_char}, and is probably well known but maybe not written down in this generality. 

As a consequence of Corollary \ref{cor:extend_WLP}, one does also get an easy answer for when adding a new variable always creates an algebra with the WLP.

\begin{corollary}\label{cor:extension_has_WLP}
An algebra $B$ has the strong Lefschetz property if and only $B\otimes_k k[x]/(x^d)$ has the weak Lefschetz property for all $d\geq 1$. 
\end{corollary}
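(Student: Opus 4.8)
The plan is to use Corollary \ref{cor:extend_WLP} to translate the condition ``$B\otimes_k k[x]/(x^d)$ has the WLP for all $d\geq 1$'' into a statement about powers of a single linear form on $B$, and then recognize that statement as the strong Lefschetz property. By Corollary \ref{cor:extend_WLP}, for each fixed $d$ the extension $B\otimes_k k[x]/(x^d)$ has the WLP if and only if there is a linear form $\ell_d\in B_1$ such that $\cdot \ell_d^{\,d}:B_i\to B_{i+d}$ has maximal rank for every $i\geq 0$. So the hypothesis ``WLP for all $d\geq 1$'' gives, for each $d$, some linear form witnessing maximal rank of the $d$-th power map in every degree.

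The forward direction is essentially immediate: if $B$ has the SLP, then by definition there is a single linear form $\ell$ for which every power $\cdot \ell^{\,d}:B_i\to B_{i+d}$ has maximal rank for all $i$ and all $d$. This $\ell$ then works simultaneously in Corollary \ref{cor:extend_WLP} for every $d$, so each extension has the WLP. The only mild point to note is that the case $d=1$ corresponds to $A=B\otimes_k k[x]/(x)\cong B$ itself, whose WLP is implied by the SLP of $B$.

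The reverse direction is where the real content lies, and the main obstacle is the \emph{a priori} dependence of the witnessing linear form $\ell_d$ on $d$: the definition of SLP demands one linear form that works for all powers at once, whereas Corollary \ref{cor:extend_WLP} hands us possibly different forms for different $d$. I would resolve this by appealing to the genericity already built into the setup. The maximal rank of $\cdot \ell^{\,d}:B_i\to B_{i+d}$ is an open condition on $\ell\in B_1$, so if it holds for \emph{some} $\ell$ it holds for a \emph{general} $\ell$; hence the existence of $\ell_d$ for each $d$ is equivalent to maximal rank of $\cdot \overline{\ell}^{\,d}:B_i\to B_{i+d}$ for a general $\overline{\ell}$. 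Since there are only finitely many relevant pairs $(i,d)$ for an artinian algebra (all maps are trivially of maximal rank once $i+d$ exceeds the socle degree), a general $\overline{\ell}$ simultaneously achieves maximal rank for \emph{all} these finitely many maps at once. A single general $\overline{\ell}$ therefore witnesses maximal rank of $\cdot \overline{\ell}^{\,d}:B_i\to B_{i+d}$ for every $i$ and every $d\geq 1$, which is precisely the strong Lefschetz property for $B$.

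Thus the proof reduces to: (1) apply Corollary \ref{cor:extend_WLP} to rephrase the WLP of each extension as a maximal-rank condition on $d$-th power maps in $B$; (2) invoke openness of the maximal-rank locus to replace the existentially quantified $\ell_d$ by a single general linear form valid across all finitely many nontrivial $(i,d)$; and (3) observe that this common linear form realizing maximal rank for all powers is exactly the definition of SLP. The genericity step in (2) is the crux, and I would make sure to state explicitly that artinianness guarantees only finitely many nontrivial conditions, so that the finite intersection of dense open sets remains dense open and in particular nonempty.
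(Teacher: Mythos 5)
Your proposal is correct and follows essentially the same route as the paper: apply Corollary \ref{cor:extend_WLP} once for each $d$ and identify the conjunction of the resulting maximal-rank conditions with the SLP of $B$. The only difference is that you make explicit the genericity/openness argument needed to replace the possibly $d$-dependent witnesses $\ell_d$ by a single general linear form, a point the paper's proof leaves implicit (as it works with general linear forms throughout); this is a legitimate and welcome clarification, not a different method.
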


\begin{proof}
By Corollary \ref{cor:extend_WLP}, the $t$:th power of a linear form gives a map that has full rank in all degrees on $B$ if and only if $B\otimes_k k[x]/(x^t)$ has the WLP. Hence all powers of such a linear form have full rank, meaning that $B$ has the SLP, if and only if $B\otimes_k k[x]/(x^t)$ has the WLP for all $t\geq 1$.
\end{proof}

Corollary \ref{cor:extension_has_WLP} has also been established before, for example in \cite[Remark 3.1.7]{Lindsey_thesis}. Next, we remark that the addition of a new variable can not make it easier to have the SLP.

\begin{proposition}\label{prop:extension_not_easier}
If $A=B\otimes_k k[x]/(x^d)$ has the SLP for some $d\geq 1$ and $k$ is a field of characteristic zero, then $B$ has the SLP.
\end{proposition}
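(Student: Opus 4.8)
The plan is to read Theorem~\ref{thm:Rank-recursion} in reverse: the SLP of $A$ says that \emph{every} power map of a general $\ell$ on $A$ has full rank, and the recursion expresses each such full-rank condition in terms of a whole \emph{family} of power maps on $B$ having full rank for the same reason. Since ``full rank for the same reason'' forces in particular each individual member of the family to have full rank, it suffices to show that every power map on $B$ occurs as one member of such a family. The case $d=1$ is immediate since $A=B\otimes_k k[x]/(x)\cong B$, so I would assume $d\geq 2$ and fix arbitrary integers $I\geq 0$ and $T\geq 1$, aiming to prove that $\cdot\overline{\ell}^{\,T}\colon B_I\to B_{I+T}$ has full rank for a general $\overline{\ell}\in B_1$.

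The key step is the right choice of parameters. Setting $i=I$ and $t=T+d-1$, the $q=0$ member of the recursion family for $\cdot\ell^t\colon A_i\to A_{i+t}$ is
\[
\cdot\overline{\ell}^{\,2\cdot 0+t-(d-1)}\colon B_{I}\to B_{I+t-(d-1)},
\]
which is exactly the desired map $\cdot\overline{\ell}^{\,T}\colon B_I\to B_{I+T}$. I would then check that $q=0$ genuinely lies in the index range $\bigl[\max\{0,d-t\},\,d-1\bigr]$ of Theorem~\ref{thm:Rank-recursion}: since $T\geq 1$ we have $t=T+d-1\geq d$, hence $d-t\leq 0$ and the lower endpoint $\max\{0,d-t\}$ equals $0$, while $d-1\geq 0$ gives the upper endpoint. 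Thus the target map is always present as the first term of the family, regardless of whether it happens to be one of the trivially full-rank maps.

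With this in place the argument closes quickly. Because $k$ has characteristic zero, the prime-divisibility hypothesis in Theorem~\ref{thm:Rank-recursion} is vacuous, so the recursion applies for our $(i,t)$. Since $A$ has the SLP, $\cdot\ell^t\colon A_i\to A_{i+t}$ has full rank, and therefore the whole family $\bigl\{\cdot\overline{\ell}^{\,2q+t-(d-1)}\colon B_{i-q}\to B_{i+q+t-(d-1)}\bigr\}_{q}$ has full rank for the same reason; in particular its $q=0$ member $\cdot\overline{\ell}^{\,T}\colon B_I\to B_{I+T}$ has full rank. As $I$ and $T$ were arbitrary, every power map of $\overline{\ell}$ on $B$ has full rank.

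The only genuine care needed — and the step I would treat as the main obstacle — is the bookkeeping around genericity, to make sure the Lefschetz element transfers correctly. The locus of $\ell\in A_1$ whose every power has full rank is Zariski open and, by the SLP of $A$, nonempty. A general linear form of $A$ has the shape $\overline{\ell}+cx$ with $c\neq 0$ and $\overline{\ell}$ general in $B_1$, and after rescaling by $c$ (as in the normalization preceding Section~\ref{sec:Quadratic}) may be written $\ell=\overline{\ell}+x$ with $\overline{\ell}$ general in $B$. Hence for a general $\overline{\ell}\in B_1$ the form $\ell=\overline{\ell}+x$ lies in the good open set, and the paragraph above shows this same $\overline{\ell}$ is a strong Lefschetz element for $B$. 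This establishes that $B$ has the SLP.
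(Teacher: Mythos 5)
Your proof is correct and takes essentially the same route as the paper's: both apply Theorem~\ref{thm:Rank-recursion} to $\cdot\ell^{t}\colon A_i\to A_{i+t}$ with $t=T+d-1$ (the paper's $a+d-1$) and extract the $q=0$ member of the resulting family, whose full rank is forced since the SLP of $A$ makes the whole family have full rank for the same reason. Your additional checks --- the $d=1$ case, the verification that $q=0$ lies in the index range, and the genericity bookkeeping for $\overline{\ell}$ --- merely make explicit what the paper leaves implicit.
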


\begin{proof}
Say we want to show that $$\cdot \ell^a:B_j \to B_{j+a}$$ has full rank for some $a\geq 1$ and $j\geq 0$. By Theorem \ref{thm:Rank-recursion}, we know that if
\[
\cdot \ell^{d-1+a}:A_j \to A_{j+a+d-1}
\]
has full rank then all of the maps
\[
\cdot \ell^{2q+a}:B_{j-q} \to B_{j+a+q}
\]
must have full rank for $q$ from $\max\{0,1-a\}=0$ to $d-1$. Since $d\geq 1$, we can use that $A$ has the SLP to get that the map obtained when $q=0$ has full rank. That is, 
$$\cdot \ell^a:B_j \to B_{j+a}$$ has full rank and hence $B$ has the SLP.
\end{proof}

Even though Proposition \ref{prop:extension_not_easier} may seem intuitive, the same statement is not true if you replace the SLP with the WLP. For example, $A=\mathbb{Q}[x,y,z]/(x^3, y^3, z^3, xyz)$ fails the WLP while $A\otimes_{\mathbb{Q}}\mathbb{Q}[w]/(w^2)$ does have the WLP. 

Looking at Corollary \ref{cor:extension_has_WLP} again, one may wonder if there are any criteria on $B$ that ensures that $B\otimes_k k[x]/(x^d)$ has the SLP for any $d\geq 1$. To answer that question, we need the following definition.

\begin{definition}
Let $A$ be an artinian algebra with Hilbert series given by 
\[
\HS(A;T)=\sum_{i=0}^Dh_iT^i.
\]
Then $A$ is \emph{almost centered} if both of the following two properties are satisfied. 
\begin{itemize}
\item If $h_i<h_j$ for some $i<j$, then $h_{i-s}\leq h_{j+s}$ for any $s \geq 0$.
\item If $h_i>h_j$ for some $i<j$, then $h_{i-s}\geq h_{j+s}$ for any $s\geq 0$.
\end{itemize}
\end{definition}

Since we know from Corollary \ref{cor:extension_has_WLP} that an algebra $B$ must have the SLP for all of the extensions $B\otimes_k k[x]/(x^d)$ to have the WLP, we can now show that if we moreover assume that $B$ is almost centered, then all of these extension do also have the SLP.

\begin{theorem}\label{thm:Extend_SLP}
Let $B$ be a standard graded artinian $k$-algebra where $k$ is a field of characteristic zero. Then $A=B\otimes_k k[x]/(x^d)$ has the SLP for any $d\geq 1$ if and only if $B$ is almost centered and has the SLP.
\end{theorem}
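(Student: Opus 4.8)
The plan is to read both directions off Theorem~\ref{thm:Rank-recursion} after one structural observation. For fixed $i,t,d$ and a general $\ell=\overline{\ell}+x$, the maps
\[
\cdot\,\overline{\ell}^{\,2q+t-(d-1)}\colon B_{i-q}\to B_{i+q+t-(d-1)},\qquad q=\max\{0,d-t\},\dots,d-1,
\]
all share the single ``center'' $c=i+\tfrac{t-(d-1)}{2}$: the source and target degrees are $c-r_q$ and $c+r_q$ with $r_q=q+\tfrac{t-(d-1)}{2}$, so as $q$ grows the pair spreads symmetrically about $c$. Assuming $B$ has the SLP, every one of these maps already has full rank, so by Theorem~\ref{thm:Rank-recursion} the map $\cdot\ell^t$ can fail to have full rank only if two members of the family have full rank \emph{for different reasons}: one with $\dim_k B_{c-r_1}<\dim_k B_{c+r_1}$ (injective but not surjective) and one with $\dim_k B_{c-r_2}>\dim_k B_{c+r_2}$ (surjective but not injective), where degenerate maps out of or into the zero space are counted as injective resp. surjective exactly as in the remark following Theorem~\ref{thm:Rank-recursion}.

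These two conditions are tied together by the elementary reformulation that $B$ is almost centered if and only if, for every center $c$, the sign of $\dim_k B_{c+r}-\dim_k B_{c-r}$ is never $+1$ at one admissible radius and $-1$ at another; I would first record this, since it is a direct unwinding of the two bullet points in the definition with $a=c-r_0$, $b=c+r_0$, $s=r-r_0$. For the direction $(\Leftarrow)$, suppose $B$ is almost centered and has the SLP, and fix $d$, $i$, $t$. The radii $r_q$ of the family above all sit at the one center $c$, so the reformulation forbids strict dimension inequalities pointing in both directions along the family; combined with the SLP of $B$, which makes each map full rank, this forces all of them to have full rank for the same reason, and Theorem~\ref{thm:Rank-recursion} then gives that $\cdot\ell^t\colon A_i\to A_{i+t}$ has full rank. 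As $i$, $t$ are arbitrary and $\ell$ is general, $A=B\otimes_k k[x]/(x^d)$ has the SLP, and as $d$ is arbitrary this holds for every $d\geq 1$.

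For $(\Rightarrow)$, the SLP of $B$ is immediate on taking $d=1$, so $A=B$ (or one may invoke Proposition~\ref{prop:extension_not_easier}). For the almost centered property I argue by contraposition: if it fails then, up to the symmetric case, there are degrees $\alpha<\beta$ and an integer $m\geq 0$ with $\dim_k B_\alpha<\dim_k B_\beta$ but $\dim_k B_{\alpha-m}>\dim_k B_{\beta+m}$. With $c=\tfrac{\alpha+\beta}{2}$ these are precisely two maps, at radii $\tfrac{\beta-\alpha}{2}$ and $\tfrac{\beta-\alpha}{2}+m$, that point in opposite directions. I would realize them in a single family by setting $d=m+1$, $i=\alpha$ and $t=m+\beta-\alpha$: a short check gives $\max\{0,d-t\}=0$, and $q=0$ yields $B_\alpha\to B_\beta$ while $q=m=d-1$ yields $B_{\alpha-m}\to B_{\beta+m}$, both inside the range $0\le q\le d-1$. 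Since $B$ has the SLP both maps have full rank, but for opposite reasons, so Theorem~\ref{thm:Rank-recursion} shows $\cdot\ell^t$ fails full rank on $B\otimes_k k[x]/(x^{m+1})$; hence this extension fails the SLP, contradicting the hypothesis.

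The main obstacle is the bookkeeping in $(\Rightarrow)$: verifying that the chosen triple $(d,i,t)$ genuinely places both offending maps in $[\max\{0,d-t\},\,d-1]$, and that the degenerate cases in which $B_\alpha=0$ or $B_{\beta+m}=0$ still count as strictly injective resp. surjective so that the two ``reasons'' really clash. Once the sign reformulation is in hand, everything else is a direct appeal to Theorem~\ref{thm:Rank-recursion}.
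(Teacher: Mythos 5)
Your proposal is correct and follows essentially the same route as the paper: both directions reduce to Theorem~\ref{thm:Rank-recursion}, the forward direction uses the identical choice $d=s+1$, $t=j-i+d-1$ (your $d=m+1$, $t=m+\beta-\alpha$) to exhibit two maps in the family forced to have full rank for opposite reasons, and the converse uses the almost-centered inequalities to align the reasons across the family. Your ``center/radius sign'' reformulation is a harmless repackaging of the paper's argument (which instead picks the smallest $q$ with unequal dimensions), and it has the minor virtue of handling the degenerate zero-space maps uniformly.
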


\begin{proof}
Let $\ell$ be a general linear form in $B$ and set $h_i=\dim_k B_i$. Since $k$ is of characteristic zero, we know from Theorem~\ref{thm:Rank-recursion} that if $A$ has the SLP or not is completely determined by the family $\mathcal{L}$ of maps
\[
\cdot \ell^{2q + t -(d-1)}:B_{i-q} \to B_{i+q+t-(d-1)}
\]
for $q$ from $\max\{0,d-t\}$ to $d-1$ for $i,t\geq 0$. 

Assume first that $B$ is almost centered and has the SLP. Then all maps in $\mathcal{L}$ have full rank since $B$ has the SLP. Further, they all have full rank for the same reason. Indeed, let $p$ be the smallest value of $q$ in the range $\max\{0,d-t\}$ to $d-1$ such that $h_{i-q}\neq h_{i+q+t-(d-1)}$, say $h_{i-p}>h_{i+p+t-(d-1)}$. Then the map between these degrees is surjective, and as $h_{i-(p+s)}\geq h_{i+(p+s)+t-(d-1)}$ for all $s\geq 0$ from the definition of almost centered, all the other maps in $\mathcal{L}$ are also surjective. The case when $h_{i-p}<h_{i+p+t-(d-1)}$ is analogous, establishing injectivity. Thus we get that $A$ has the SLP for any $d\geq 1$.

Suppose now that $A$ has the SLP for any $d\geq 1$. Then in particular it has the WLP, so we know from Corollary~\ref{cor:extension_has_WLP} that $B$ has the SLP. We will now prove that $B$ is almost centered by contradiction. Assume for a contradiction that $h_i<h_j$ for some $i<j$ but that $h_{i-s}>h_{j+s}$ for some $s\geq 0$. Consider now $A=B\otimes_k k[x]/(x^d)$ for $d=s+1$ and multiplication by the $t$:th power of a linear form in $A$ from degree $i$ for $t=j-i+d-1$. Then Theorem~\ref{thm:Rank-recursion} gives that all maps in $\mathcal{L}$ should have full rank for the same reason. Here $q=0$ gives one of the maps since
\[
d-t = s+1-(j-i+d-1) = i-j+1 \leq 0,
\]
and $q=d-1=s$ another part. Now, for $q=0$, we are going from a space of dimension $h_i$ to a space of dimension $h_{i+t-(d-1)}=h_j$, so $h_i<h_j$ gives that injectivity is required. On the other hand, for $q=s$, we are going from a space of dimension $h_{i-s}$ to one of dimension $h_{j+s}$, so $h_{i-s}>h_{j+s}$ are forcing surjectivity. Hence they cannot have full rank for the same reason and we have our contradiction. 
The case for the reverse inequalities can be proven in the same way for the same choices of $d$ and $t$. Hence $B$ must be almost centered and we are done.
\end{proof}

Almost centered algebras were first introduced by Lindsey in \cite{Lindsey}, but using another criterion for the Hilbert series of such algebras that we give in the next corollary. However, using Theorem~\ref{thm:Extend_SLP}, we can prove that they are equivalent.  

\begin{corollary}\label{cor:Mid-heavy=a.c.}
An artinian algebra $A$ with the SLP and socle degree $D$ is almost centered if and only if it has a Hilbert series satisfying that either 
\begin{itemize}
\item $h_{i-1}\leq h_{D-i}\leq h_i$ for all $ 0 \leq i \leq \lfloor \frac{D}{2} \rfloor$, or 
\item $h_{D-i+1}\leq h_{i}\leq h_{D-i}$ for all $ 0 \leq i \leq \lfloor \frac{D}{2} \rfloor$.
\end{itemize}
\end{corollary}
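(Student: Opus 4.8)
The plan is to recast the almost centered condition as a single sign inequality and then show that it is governed entirely by the reflection through the midpoint $D/2$. Write the Hilbert function as $h_0,\dots,h_D$ and set $h_a=0$ for $a<0$ or $a>D$. I first observe that $A$ is almost centered if and only if $(h_a-h_b)(h_{a-s}-h_{b+s})\geq 0$ for all $a<b$ and all $s\geq 0$: the two bullets in the definition of almost centered are exactly the assertions that $h_a-h_b$ and $h_{a-s}-h_{b+s}$ never have strictly opposite signs. Call this condition $(\star)$. Note that the reflection $h_a\mapsto h_{D-a}$ interchanges the first Hilbert condition $h_{i-1}\leq h_{D-i}\leq h_i$ with the second one $h_{D-i+1}\leq h_i\leq h_{D-i}$, while preserving $(\star)$; hence it suffices to prove one representative of each implication and transport the other by symmetry.

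For the forward direction, assume $(\star)$. The central tilts $\phi(i):=h_i-h_{D-i}$ for $0\leq i\leq\lfloor D/2\rfloor$ compare pairs that are all concentric about $D/2$, since $(i,D-i)$ is the $s$-widening of $(i',D-i')$ with $s=i'-i$. Applying $(\star)$ to each such pair gives $\phi(i)\phi(i')\geq 0$, so all the $\phi(i)$ share a single sign; say $\phi(i)\geq 0$ for every $i$, which is the outer inequality $h_{D-i}\leq h_i$. To upgrade this to the full first condition I use the boundary values: if $h_{i-1}>h_{D-i}$ for some $i\leq\lfloor D/2\rfloor$ then, since $i-1<D-i$, the second bullet applies to $(i-1,D-i)$ and stepping out by $s=i$ forces $0=h_{-1}\geq h_D\geq 1$, a contradiction; hence $h_{i-1}\leq h_{D-i}$ and the first condition holds. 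The case $\phi(i)\leq 0$ yields the second condition by the mirror boundary argument, which this time reaches $1=h_0\leq h_{D+1}=0$.

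For the converse I show that the first condition implies $(\star)$; the second condition then follows by reflection. Chaining the inequalities $h_{i-1}\leq h_{D-i}\leq h_i$ across $i$ produces a single totally ordered chain of the values $h_a$, and a bookkeeping check identifies the resulting rank with the explicit tent function $\rho(a)=\min\{2a+1,\,2(D-a)\}$, in the sense that $\rho(a)\leq\rho(b)$ implies $h_a\leq h_b$ for all integers $a,b$ (out-of-range indices landing below everything). This reduces $(\star)$ for $h$ to the same statement for $\rho$, which is a routine piecewise-linear computation: writing $c=(a+b)/2$, the tilt $\rho(c-r)-\rho(c+r)$ equals the constant $4\bigl(c-\tfrac{D}{2}+\tfrac14\bigr)$ once the pair straddles the apex $x_0=\tfrac{D}{2}-\tfrac14$ of the tent, and is strictly monotone of that same sign on the narrower pairs that do not; so this tilt never changes sign as $r$ grows, which is precisely $(\star)$ for $\rho$.

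The essential difficulty, and the step I expect to be the crux, is this converse: the two Hilbert conditions only constrain the reflection through the single midpoint $D/2$, whereas almost centered constrains every concentric family of pairs. The tent-function linearization is what bridges the gap, turning an all-midpoints statement into the elementary sign analysis of a minimum of two lines. I also note that the standing hypotheses enter only lightly: standard gradedness gives $h_0=1$ and the definition of socle degree gives $h_D\geq 1$, which is all the boundary arguments of the forward direction require, while the strong Lefschetz property guarantees that the Hilbert function is unimodal, consistent with the shape forced by either Hilbert condition.
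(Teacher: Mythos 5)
Your proof is correct, but it takes a genuinely different route from the paper's. The paper obtains the corollary indirectly, by chaining two results about extensions: Lindsey's theorem \cite[Theorem 3.10]{Lindsey} (an algebra with the SLP satisfies one of the two Hilbert-series criteria if and only if $A\otimes_k k[x]/(x^d)$ has the SLP for all $d\geq 1$) together with Theorem \ref{thm:Extend_SLP} (the latter holds if and only if $A$ is almost centered and has the SLP); there the SLP hypothesis is genuinely used, since both ingredients are statements about Lefschetz properties. You instead supply exactly the direct combinatorial argument that the paper mentions in the remark after the corollary but refrains from writing out: your proof never invokes the SLP, using only $h_0=1$, $h_D\geq 1$, and the zero-extension convention $h_a=0$ for $a\notin[0,D]$ in the definition of almost centered --- a convention which is indeed the intended reading (it is used the same way in the proof of Theorem \ref{thm:Extend_SLP}, and without it the statement would fail, e.g.\ for $h=(1,2,3)$, the Hilbert function of $\mathbb{Q}[x,y]/(x,y)^3$ discussed right after the corollary). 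I checked the two nontrivial steps and both hold: in the forward direction the pairs $(i,D-i)$ are mutually concentric, so your condition $(\star)$ forces all $\phi(i)=h_i-h_{D-i}$ to weakly share one sign, and the boundary contradictions ($0=h_{-1}\geq h_D\geq 1$, respectively $1=h_0\leq h_{D+1}=0$) correctly yield the inner inequalities $h_{i-1}\leq h_{D-i}$, respectively $h_{D-i+1}\leq h_i$; in the converse, the chained inequalities $h_{-1}\leq h_D\leq h_0\leq h_{D-1}\leq h_1\leq h_{D-2}\leq\cdots$ are precisely the order induced by $\rho(a)=\min\{2a+1,2(D-a)\}$ (odd values on the left branch, even on the right, so there are no ties among $0,\dots,D$ and out-of-range indices have $\rho<0$), and your tilt computation ($-4r$ on the left branch, $4r$ on the right, and the constant $4(c-\tfrac{D}{2}+\tfrac14)$ once the pair straddles the apex) shows $\rho(c-r)-\rho(c+r)$ has weakly constant sign in $r$, which transports $(\star)$ from $\rho$ to $h$. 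As for what each approach buys: the paper's proof is a two-line corollary of machinery already developed and keeps the exposition lean, while yours is self-contained, shows the equivalence is a pure statement about Hilbert functions with no Lefschetz hypothesis needed, and, combined with Theorem \ref{thm:Extend_SLP}, re-derives the cited direction of \cite[Theorem 3.10]{Lindsey} instead of importing it as a black box --- at the cost of the tent-function bookkeeping.
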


\begin{proof}
By \cite[Theorem 3.10]{Lindsey}, an algebra $A$ with the SLP satisfies one of the criteria given in the statement of this corollary if and only if $A\otimes_k k[x]/(x^d)$ has the SLP for all $d\geq 1$. But by Theorem~\ref{thm:Extend_SLP}, that is the case if and only if $A$ has the SLP and is almost centered. Therefore, $A$ must be almost centered if and only if any of the two criteria given in the corollary holds.
\end{proof}

Note that there is a small typo in \cite{Lindsey} where the inequalities are only given for $i\geq 1$ instead of $i\geq 0$ as we have in Corollary \ref{cor:Mid-heavy=a.c.}. This does not add anything to the second criteria, but it does add the requirement $h_D\leq h_0$ to the first criteria. We can see that this is necessary to have since otherwise the algebra $B=\mathbb{Q}[x,y]/(x,y)^3$ has the SLP and satisfy the first criteria, but a short computation shows that $B\otimes_{\mathbb{Q}}\mathbb{Q}[z]/(z^2)$ fails the SLP. However, $B$ fails both criteria when $h_D\leq h_0$ is included. Note further that it is possible to give a direct proof of Corollary \ref{cor:Mid-heavy=a.c.} using the definitions given and without assuming that $A$ has the SLP, but since it does not give any additional insight, we refrain from doing so.

After having explored what criteria that needs to be imposed on an algebra such that algebras in one more variable has the weak or strong Lefschetz property, a reasonable next question might be what is required when we want to add more than one new variable. 

\begin{proposition}
\label{prop:add_moore_variables}
Let $B$ be a standard graded artinian $k$-algebra where $k$ is a field of characteristic zero. Then $$B\otimes_k k[x_1]/(x_1^{d_1})\otimes_k \cdots \otimes_k k[x_n]/(x_n^{d_n}) \cong B\otimes_k  k[x_1,\dots, x_n]/(x_1^{d_1},\dots, x_n^{d_n})$$
has the SLP for any $d_1,\dots, d_n\geq 1$ if and only if $B$ is almost centered and has the SLP. 
\end{proposition}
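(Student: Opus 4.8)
The plan is to reduce everything to the one-variable result, Theorem~\ref{thm:Extend_SLP}, by adjoining the variables $x_1,\dots,x_n$ one at a time. One implication is immediate: if $B\otimes_k k[x_1,\dots,x_n]/(x_1^{d_1},\dots,x_n^{d_n})$ has the SLP for all choices of exponents, then specializing to $d_2=\dots=d_n=1$ makes the factors $k[x_i]/(x_i)\cong k$ trivial, so that $B\otimes_k k[x_1]/(x_1^{d_1})$ has the SLP for every $d_1\ge 1$. By Theorem~\ref{thm:Extend_SLP} this forces $B$ to be almost centered and to have the SLP.

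For the reverse, and substantive, implication, suppose $B$ is almost centered with the SLP and set $B^{(j)}=B\otimes_k k[x_1,\dots,x_j]/(x_1^{d_1},\dots,x_j^{d_j})$, so that $B^{(0)}=B$ and $B^{(n)}$ is the algebra in question. I would prove by induction on $j$ that each $B^{(j)}$ is almost centered and has the SLP; the case $j=n$ then gives the claim. For the inductive step, write $B^{(j)}=B^{(j-1)}\otimes_k k[x_j]/(x_j^{d_j})$. Since $B^{(j-1)}$ is almost centered with the SLP, Theorem~\ref{thm:Extend_SLP} immediately yields that $B^{(j)}$ has the SLP, and it remains only to check that $B^{(j)}$ is again almost centered.

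This last point is the heart of the matter and the step I expect to be the main obstacle, since being almost centered is a condition on the Hilbert function alone, whereas Theorem~\ref{thm:Extend_SLP} only transports the SLP. Concretely, I would isolate the combinatorial statement that if an artinian algebra $C$ with the SLP is almost centered, then so is $C\otimes_k k[x]/(x^d)$, whose Hilbert function is the convolution $g_m=\sum_{l=0}^{d-1} h_{m-l}$ of the Hilbert function $(h_i)$ of $C$ with the constant sequence $(1,\dots,1)$. To prove this I would pass to the more tractable description of almost centeredness for SLP algebras given by Corollary~\ref{cor:Mid-heavy=a.c.}, which replaces the definition by one of two families of inequalities relating $h_i$ to $h_{D-i}$ around the socle degree $D$. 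The key observation is that the partial sums $F(k)=\sum_{j=0}^{k}(h_j-h_{D-j})$ are always symmetric about $(D-1)/2$, and that they are unimodal with peak near $D/2$ exactly when the first mid-heavy inequalities hold, because $F(k)-F(k-1)=h_k-h_{D-k}$ then changes sign only at the center. Writing the corresponding inequalities for $g$ as differences $F(i)-F(i-d)$, their nonnegativity in the range $0\le i\le \lfloor (D+d-1)/2\rfloor$ follows from this unimodality together with the symmetry of the kernel $(1,\dots,1)$: the socle degree of $C\otimes_k k[x]/(x^d)$ is $D+d-1$, and the relevant convolution windows are then centered at points to the left of $D/2$, which is exactly what makes the signs come out right. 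Carrying this out for both inequalities of the mid-heavy condition, for the second (right-heavy) condition by the symmetric argument, and across the parity of $d$, is where the careful bookkeeping lies, but no idea beyond this cumulative-sum unimodality is needed.

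Finally, I would remark that this induction in fact proves more, namely that the full tensor product is itself almost centered with the SLP, and that since the order in which the variables are adjoined is irrelevant, the conclusion is visibly symmetric in $d_1,\dots,d_n$, in agreement with the isomorphism asserted in the statement.
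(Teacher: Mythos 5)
Your proposal is correct, and its skeleton is the same as the paper's: the forward direction by specializing all but one exponent to $1$ and invoking Theorem~\ref{thm:Extend_SLP}, and the converse by adjoining the variables one at a time, with Theorem~\ref{thm:Extend_SLP} supplying the SLP at each inductive step. The one substantive difference is how you handle the step you rightly identify as the crux --- that almost-centeredness survives tensoring with $k[x]/(x^d)$: the paper disposes of this by citing Lindsey's Corollary~3.4, whereas you sketch a direct combinatorial proof. Your sketch does go through: with $F(k)=\sum_{j=0}^{k}(h_j-h_{D-j})$ one indeed has $F(k)=F(D-1-k)$, and writing $D'=D+d-1$ for the new socle degree, the convolution identity $g_i-g_{D'-i}=F(i)-F(i-d)$ holds, so the first family of mid-heavy inequalities from Corollary~\ref{cor:Mid-heavy=a.c.} reduces to comparing a symmetric unimodal cumulative sum at the endpoints of a window whose midpoint $i-d/2$ lies left of the center $(D-1)/2$ precisely when $i\leq \lfloor D'/2\rfloor$; the companion inequality $g_{i-1}\leq g_{D'-i}$ works the same way with the shifted sum $F_1(k)=\sum_{j=0}^{k}(h_j-h_{D-1-j})$, which is symmetric about $(D-2)/2$ and anti-unimodal, and the $i=0$ edge case is absorbed automatically since $F(0)=h_0-h_D$ encodes the requirement $h_D\leq h_0$ discussed after Corollary~\ref{cor:Mid-heavy=a.c.}. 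Two remarks: your route is not fully independent of Lindsey, since Corollary~\ref{cor:Mid-heavy=a.c.} is itself proved in the paper by citing Lindsey's Theorem~3.10, so arguing directly from the definition of almost centered would be needed for genuine self-containment; on the other hand, what your approach buys is an explicit elementary verification of the preservation lemma, where the paper's three-line proof rests on an external citation.
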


\begin{proof}
We know from Theorem \ref{thm:Extend_SLP} that if $B\otimes_k  k[x_1,\dots, x_n]/(x_1^{d_1},\dots, x_n^{d_n})$ has the SLP for any $d_1,\dots, d_n\geq 1$, then it must be the case that $B$ is almost centered and has the SLP. Conversely, \cite[Corollary 3.4]{Lindsey} says that if $B$ is almost centered, then so is $B\otimes_k k[x_1]/(x_1^{d_1})$ for any $d_1\geq 1$. Hence repeated applications of Theorem~\ref{thm:Extend_SLP} gives that $B\otimes_k  k[x_1,\dots, x_n]/(x_1^{d_1},\dots, x_n^{d_n})$ has the SLP as desired.
\end{proof}

We note that this also follows from \cite[Theorem 3.5]{Lindsey} since a complete intersection has a symmetric Hilbert function. It might be surprising that it suffices to ask for the WLP and take all $d_i=2$ in Proposition \ref{prop:add_moore_variables} to get the same requirements on the original algebra.

\begin{proposition}
\label{prop:add_squares}
Let $A$ be a standard graded artinian $k$-algebra where $k$ is a field of characteristic zero. Then $$A\otimes_k k[x_1]/(x_1^2)\otimes_k \cdots \otimes_k k[x_n]/(x_n^2) \cong A\otimes_k  k[x_1,\dots, x_n]/(x_1^2,\dots, x_n^2)$$
has the WLP for any $n\geq 0$ if and only if $A$ is almost centered and has the SLP. 
\end{proposition}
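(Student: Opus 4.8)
The plan is as follows. The ``if'' direction is immediate: if $A$ is almost centered and has the SLP, then Proposition~\ref{prop:add_moore_variables} with all exponents equal to $2$ gives that $A\otimes_k k[x_1,\dots,x_n]/(x_1^2,\dots,x_n^2)$ has the SLP, and in particular the WLP, for every $n$. So all of the work is in the ``only if'' direction, and the whole strategy is to peel off the square-variables one at a time using the quadratic recursion. Write $C_n=k[x_1,\dots,x_n]/(x_1^2,\dots,x_n^2)$ and $A_n=A\otimes_k C_n$, so that $A_n=A_{n-1}\otimes_k k[x_n]/(x_n^2)$. The key observation, which I would record first as a propagation principle, is the following. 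By Theorem~\ref{thm:recursion_quadratic} the map $\cdot\ell^{\,t}\colon (A_m)_b\to (A_m)_{b+t}$ has full rank if and only if its two ``children'' $\cdot\ell^{\,t-1}\colon (A_{m-1})_b\to(A_{m-1})_{b+t-1}$ and $\cdot\ell^{\,t+1}\colon(A_{m-1})_{b-1}\to(A_{m-1})_{b+t}$ have full rank for the same reason; moreover, from the block anti-diagonal form produced in Lemma~\ref{lem:Block_quadratic}, the parent is injective (respectively surjective) precisely when both children are. Iterating, if the degree-$a_0$ multiplication $\cdot\ell\colon (A_N)_{a_0}\to(A_N)_{a_0+1}$ has full rank, then every map $\cdot\ell^{\,a}\colon A_i\to A_{i+a}$ reached by $N$ peeling steps is itself full rank, and all of these level-$0$ descendants share the injective-or-surjective reason of the top map.

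For the SLP of $A$ I would then argue as follows. A peeling step sends $(t,b)\mapsto(t-1,b)$ or $(t,b)\mapsto(t+1,b-1)$, so after $N$ steps starting from $(1,a_0)$ one reaches power $1-N+2p$ and source $a_0-p$ for $0\le p\le N$. Hence, given any $a\ge 1$ and $i\ge 0$, choosing $N\ge a-1$ of the parity making $a+N-1$ even and setting $a_0=i+(a+N-1)/2$ exhibits $\cdot\ell^{\,a}\colon A_i\to A_{i+a}$ as a descendant of the top map of $A_N$. Since $A_N$ has the WLP by hypothesis, that top map is full rank, so the propagation principle forces $\cdot\ell^{\,a}\colon A_i\to A_{i+a}$ to be full rank for a general $\overline{\ell}$. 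Only finitely many powers are relevant, namely those up to the socle degree of $A$, and full rank is a dense open condition on the linear form, so a single general form realises all of them simultaneously; that is, $A$ has the SLP.

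For almost centeredness I would argue by contradiction. Suppose $A$ is not almost centered; then, writing $h_m=\dim_k A_m$, there are $i<j$ and $s\ge 0$ with, say, $h_i<h_j$ but $h_{i-s}>h_{j+s}$. Because $A$ has the SLP, the map $f\colon A_i\to A_j$ given by $\cdot\ell^{\,j-i}$ is strictly injective (full rank with $h_i<h_j$), while $g\colon A_{i-s}\to A_{j+s}$ given by $\cdot\ell^{\,j-i+2s}$ is strictly surjective. A short computation with the peeling parametrisation shows that both $f$ (via $p=a_0-i$) and $g$ (via $p=a_0-i+s$) are descendants of the \emph{same} top map $\cdot\ell\colon(A_N)_{a_0}\to(A_N)_{a_0+1}$ once $a_0=(i+j+N-1)/2$, which is achieved by any $N\ge j-i+2s-1$ of the appropriate parity. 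If $A_N$ had the WLP, this top map would be full rank, hence injective or surjective; in the first case the propagation principle forces $g$ to be injective and in the second it forces $f$ to be surjective, either way a contradiction. Thus $A_N$ fails the WLP, contradicting the hypothesis, so $A$ must be almost centered. The reverse inequality case is handled identically, with the roles of injective and surjective exchanged.

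The main obstacle is the edge-case bookkeeping. The clean propagation statement above uses Theorem~\ref{thm:recursion_quadratic}, which applies at a node only when the relevant source and target degrees are nonzero; at boundary nodes one must instead invoke Proposition~\ref{prop:quadratic_edge_case}, where full rank can be forced to mean surjectivity (or injectivity) rather than an unconstrained reason. I would therefore need to choose the ordering of the peeling steps and the size of $N$ so that the descent paths reaching $f$ and $g$ stay among interior nodes, and separately check that the boundary behaviour of Proposition~\ref{prop:quadratic_edge_case}, including the possibility $h_{j+s}=0$, is consistent with---and if anything only reinforces---the forced reason. Verifying these reachability and parity constraints, and that the two descent paths can indeed be realised through interior nodes, is the technical heart of the argument; the remainder is the formal induction packaging the propagation principle.
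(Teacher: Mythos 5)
Your strategy is the same as the paper's: peel off one square variable at a time via Theorem~\ref{thm:recursion_quadratic}, use the block form of Lemma~\ref{lem:Block_quadratic} to see that a parent map is injective (resp.\ surjective) precisely when both children are, and conclude that a full-rank top map forces all level-zero descendants to have full rank for one common reason. Your parameter choices in the almost-centered part even coincide with the paper's (its $A^{[2s+j-i-1]}$ with top degree $j+s-1$ is exactly your $a_0=(i+j+N-1)/2$ at the minimal admissible $N$), and the SLP half of your argument is sound, since for it you only need full rank, not a shared reason, to propagate downward, and that survives the boundary cases of Proposition~\ref{prop:quadratic_edge_case}. One small slip: taking ``any $N\ge j-i+2s-1$ of the appropriate parity'' is wrong, because enlarging $N$ pushes internal nodes of the descent tree to source degree zero, where Theorem~\ref{thm:recursion_quadratic} no longer applies; only the minimal choice keeps all internal nodes interior (and only just, when $s=i$).

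The genuine gap is your claim that the reverse-inequality case is ``handled identically'' and that the boundary behaviour is bookkeeping resolvable by keeping paths interior. In the orientation you wrote out ($h_i<h_j$, $h_{i-s}>h_{j+s}$) one automatically has $i-s\ge 0$, but in the reverse orientation ($h_i>h_j$, $h_{i-s}<h_{j+s}$) the case $s>i$ is genuinely nonempty, since $h_{i-s}=0<h_{j+s}$ is possible. There the injective-only witness is a map out of $A_{i-s}=0$: it sits at the boundary no matter how you schedule the peeling, and at nodes with source degree zero Proposition~\ref{prop:quadratic_edge_case} says full rank is equivalent to full rank of the \emph{first} child alone, so the shared-reason linkage through the second child is simply absent rather than ``reinforced''; no choice of $N$ or ordering repairs this. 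The paper closes this case with a two-stage bootstrap that your proposal lacks: first it proves the inequalities for all $s\le i$ (essentially your argument, with $s'=\min\{s,i\}$ ensuring every internal node is interior), and then, for $s>i$, it descends from $A^{[i+j+1]}$ in degree $i+j$ but \emph{stops at level one}, where the descendants are maps $A^{[1]}_p\to A^{[1]}_{i+j+1-p}$. The $p=0$ map is strictly injective because $\dim_k A^{[1]}_0=1<h_{i+j}+h_{i+j+1}$ (using $i+j+1\le j+s$ and $h_{j+s}>0$), while the $p=i$ map is strictly surjective because the already-established case $s=1\le i$ upgrades $h_i>h_j$ to $h_{i-1}\ge h_{j+1}$, giving $h_{i-1}+h_i>h_j+h_{j+1}$. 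This is a separate argument anchored at $h_0=1$, not edge-case verification, and without it your proof of almost-centeredness is incomplete.
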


\begin{proof}
Let us introduce the notation
\[
A^{[n]}=A\otimes_k  k[x_1,\dots, x_n]/(x_1^2,\dots, x_n^2).
\]
If $A$ is almost centered and has the SLP, then Proposition \ref{prop:add_moore_variables} gives that $A^{[n]}$ has the SLP for all $n\geq 0$ and therefore also the WLP. For the converse direction, assume that $A$ fails the SLP or that $A$ is not almost centered.
If $A$ fails the SLP, then there is some $j> i\geq 0$ such that $\cdot \ell^{j-i}: A_i \to A_{j}$ fails to have full rank. We then claim that $A^{[j-i-1]}$ fails the WLP in degree $j-1$. Indeed, by Theorem~\ref{thm:recursion_quadratic}, $\cdot \ell:A^{[j-i-1]}_{j-1} \to A^{[j-i-1]}_{j}$ having full rank implies that $\cdot \ell^2:A^{[j-i-2]}_{j-2} \to A^{[j-i-2]}_{j}$ has full rank where we let $\ell$ denote the general linear form in the appropriate $A^{[k]}$. Applying Theorem \ref{thm:recursion_quadratic} one more time then gives that $\cdot \ell:A^{[j-i-3]}_{j-2} \to A^{[j-i-3]}_{j-1}$ and $\cdot \ell^3:A^{[j-i-3]}_{j-3} \to A^{[j-i-3]}_{j}$ have full rank for the same reason. After a total of $j-i-1$ applications of Theorem \ref{thm:recursion_quadratic}, we then conclude that the maps
\[
\cdot \ell^{j-i-2p}:A_{i+p} \to A_{j-p}
\]
for $p$ from $0$ to $\lfloor \frac{j-i-1}{2} \rfloor$ all have full rank for the same reason. In particular, $p=0$ gives that $\cdot \ell^{j-i}:A_i \to A_{j}$ should have full rank, a contradiction. Next, assume that $A$ is not almost centered, say $h_i>h_j$ for some $i<j$ but $h_{i-s}<h_{j+s}$ for some $s\geq 0$ where $h_i=\dim_k (A_i)$. The proof in the case of the reverse inequalities is analogous. Let $s'=\min\{s,i\}$ and consider multiplication by $\ell$ on $A^{[2s'+j-i-1]}$ from degree $j+s'-1$. Similar to the above, we then find that the maps
\[
\cdot \ell^{2s'+j-i-2p}: A_{i-s'+p} \to A_{j+s'-p}
\]
for $p$ from $0$ to $\lfloor \frac{2s'+j-i-1}{2} \rfloor$ all have full rank for the same reason. If $s'=s$, then $p=0$ gives a map that can only be injective since $h_{i-s}<h_{j+s}$, while $p=s$ gives a map that can only be surjective since $h_{i}>h_j$, contradicting our assumption that $A^{[2s'+j-i-1]}$ has the WLP. Hence we get that if $h_i<h_j$, then $h_{i-s}\leq h_{j+s}$ for at least all $s$ satisfying $0\leq s \leq i$.

Finally, if $s'=i<s$, let us look at multiplications by $\ell$ on $A^{[i+j+1]}$ from degree $i+j$. Then the maps 
\[
\cdot \ell^{i+j-2p+1}: A^{[1]}_{p} \to A^{[1]}_{i+j-p+1}
\]
for $p$ from $0$ to $\lfloor \frac{i+j}{2} \rfloor$ all have full rank for the same reason. However, here we note that $\dim_k(A^{[1]}_{0})=h_0=1$ and $\dim_k (A^{[1]}_{i+j+1})=h_{i+j}+h_{i+j+1}\geq 2$ where we use that $i+j+1\leq s+j$, so $h_{j+s}>0$ gives that $h_{i+j},h_{i+j+1}\geq 1$. Hence $p=0$ gives a map that can only be injective. But using what we just proved to get from $h_{i}>h_j$ to $h_{i-1}\geq h_{j+1}$, we have that $$\dim_k(A^{[1]}_{i})=h_{i-1}+h_{i} > h_{j} + h_{j+1}=\dim_k(A^{[1]}_{j+1}),$$
so $p=i$ gives a map that can only be surjective. Hence we get a contradiction and that $A$ must be almost centered.
\end{proof}

Combining the results of Theorem \ref{thm:Extend_SLP}, Corollary \ref{cor:Mid-heavy=a.c.}, Proposition \ref{prop:add_moore_variables} and Proposition~\ref{prop:add_squares}, we get the following new characterization of the almost centered property.

\begin{theorem}
Let $A$ be a standard graded artinian $k$-algebra of socle degree $D$ where $k$ is a field of characteristic zero. Then the following are equivalent.
\begin{itemize}
\item $A$ is almost centered and has the SLP.
\item $A$ has the SLP and a Hilbert series satisfying that $h_{i-1}\leq h_{D-i}\leq h_i$ for all $ 0 \leq i \leq \lfloor \frac{D}{2} \rfloor$, or $h_{D-i+1}\leq h_{i}\leq h_{D-i}$ for all $ 0 \leq i \leq \lfloor \frac{D}{2} \rfloor$.
\item $A\otimes_k k[x]/(x^d)$ has the SLP for all $d\geq 1$.
\item $A\otimes_k  k[x_1,\dots, x_n]/(x_1^{d_1},\dots, x_n^{d_n})$ has the SLP for all $d_1,\dots, d_n\geq 1$.
\item $A\otimes_k  k[x_1,\dots, x_n]/(x_1^{2},\dots, x_n^{2})$ has the WLP for all $n\geq 0$.
\end{itemize}
\end{theorem}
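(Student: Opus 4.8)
The plan is to treat the first condition---that $A$ is almost centered and has the SLP---as a central hub, and to show separately that each of the other four conditions is equivalent to it. Since logical equivalence is transitive, this immediately yields that all five conditions are mutually equivalent. Each of the four results I intend to cite already carries the standing hypothesis that $k$ has characteristic zero, which matches the hypothesis of the theorem, so all of them apply verbatim in the present setting.

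First I would dispatch the equivalences that are essentially restatements. Setting $B = A$, Theorem~\ref{thm:Extend_SLP} asserts that $A\otimes_k k[x]/(x^d)$ has the SLP for all $d\geq 1$ if and only if $A$ is almost centered and has the SLP; this is exactly the equivalence of the third condition with the first. In the same way, Proposition~\ref{prop:add_moore_variables} (with $B = A$) gives the equivalence of the fourth condition with the first, and Proposition~\ref{prop:add_squares} gives the equivalence of the fifth condition with the first. In each case the cited statement is already phrased as an ``if and only if'' whose right-hand side is precisely our first condition, so no extra argument is required.

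For the second condition I would invoke Corollary~\ref{cor:Mid-heavy=a.c.}, which states that an artinian algebra with the SLP and socle degree $D$ is almost centered if and only if its Hilbert series satisfies one of the two displayed chains of inequalities. Since both the first and the second condition presuppose that $A$ has the SLP, this corollary gives exactly that these two conditions are equivalent. Assembling these four equivalences by transitivity then completes the proof. I do not expect a substantive obstacle here: the entire argument is a bookkeeping exercise of matching hypotheses, and the only point demanding any care is to keep the SLP assumption attached on both sides when passing through Corollary~\ref{cor:Mid-heavy=a.c.}, since that result is stated only for algebras already known to have the SLP.
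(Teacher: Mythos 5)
Your proposal is correct and takes essentially the same route as the paper, which obtains this theorem precisely by combining Theorem \ref{thm:Extend_SLP}, Corollary \ref{cor:Mid-heavy=a.c.}, Proposition \ref{prop:add_moore_variables} and Proposition \ref{prop:add_squares}, each applied with $B=A$ and with the first condition serving as the hub. Your one point of care---keeping the SLP hypothesis attached on both sides when passing through Corollary \ref{cor:Mid-heavy=a.c.}---is exactly the right bookkeeping, since that corollary is stated only for algebras already known to have the SLP.
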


\section{Summary and outlook}

To summarize, the main result of this paper is Theorem \ref{thm:Rank-recursion}, giving a classification and powerful tool for when powers of general linear forms have full rank on artinian algebras of the form $A=B\otimes_kk[x]/(x^d)$ in terms of powers of linear forms on $B$. This is then used to give a short proof that monomial complete intersections have the strong Lefschetz property in Theorem \ref{thm:CI_determinant}. Finally, more characterizations for when different families of algebras of the form $A=B\otimes_kk[x]/(x^d)$ have the weak or the strong Lefschetz properties are given in Section \ref{sec:Applications}.

We end by mentioning some open problems that could serve as natural continuations of this work. First, while Theorem \ref{thm:Rank-recursion} does give us all information we require when working over a field of characteristic zero, it remains to give a full classification when working over finite fields. In particular, it is still an open problem to classify monomial complete intersections with the WLP in positive characteristics. Perhaps a finer study of the matrices involved in the proof of Theorem~\ref{thm:Rank-recursion} could give some insight. 

On the other hand, even when working over a field of characteristic zero, one might wish for a classification for when different kinds of extensions of an algebra $B$ by another algebra $C$ have the weak or strong Lefschetz property from information about $B$. Theorem~\ref{thm:Rank-recursion} does this when $C$ a monomial complete intersection, but other families would also be interesting to consider. Maybe $C=k[x_1,\dots, x_n]/(x_1,\dots, x_n)^d$ for some $d\geq 1$ could be a natural next candidate to look at?

\section*{Acknowledgements}
The author would like to thank Samuel Lundqvist and Lisa Nicklasson for comments that improved the exposition of this paper. 

\bibliographystyle{plain}
\bibliography{references}

\end{document}